\newtheorem{Theorem}[equation]{Theorem}
\newtheorem{Lemma}[equation]{Lemma}
\newtheorem{Corollary}[equation]{Corollary}
\newtheorem{Proposition}[equation]{Proposition}
\theoremstyle{definition}
\newtheorem{Definition}[equation]{Definition}
\theoremstyle{remark}
\numberwithin{equation}{section}
\def \W {\mathcal{W}}
\def \N {\mathcal{N}}
\def \dist {{\rm dist}}
\def \Vol {{\rm Vol}}
\DeclareMathOperator{\Rm}{Rm}
\DeclareMathOperator{\Ric}{Ric}
\title{Perelman's entropy on ancient Ricci flows} 
\author{Zilu Ma and Yongjia Zhang}
\numberwithin{equation}{section}
\begin{document}
\maketitle

\begin{abstract}
In \cite{Z2}, the second author proved Perelman's assertion, namely, for an ancient Ricci flow with bounded and nonnegative curvature operator, bounded entropy is equivalent to noncollapsing on all scales. In this paper, we continue this discussion. It turns out that the  curvature operator nonnegativity is not a necessary condition, and we need only to assume a consequence of Hamilton's trace Harnack. Furthermore, we show that this condition holds for steady Ricci solitons with nonnegative Ricci curvature.
\end{abstract}

\section{Introduction}

The entropy formula for the Ricci flow was introduced by Perelman \cite{Per02}, with the help of which he proved the no local collapsing theorem and some other famous theorems, such as the pseudolocality theorem. Along with the entropy formula, Perelman also invented the reduced geometry for the Ricci flow. Both of these two monotonicity formulas have ever since been central techniques in this field. At the first glance, it appears that Perelman's entropy is neatly formulated, whereas the reduced geometry deals with a subsolution---rather than a solution---to the conjugate heat equation. Nevertheless, the analysis of the reduced geometry 
needs only the local geometric information, whereas the analysis of Perelman's entropy requires one to handle the heat equation, whose solution is sensitive to the global geometry. In view of these facts, it is not surprising that the reduced geometry is more tractable in the localization. Indeed, in the construction of the Ricci flow with surgeries, Perelman chiefly applied the reduced geometry.

On the other hand, it turns out that many theorems proved by the reduced geometry method can also be proved by Perelman's entropy method. To begin with, Zhang \cite{ZQ10} showed that in the proof of the Poincar\'e conjecture, the reduced geometry can be replaced with Perelman's entropy. Furthermore, under the condition of \emph{either} Type I curvature bound \emph{or} bounded and nonnegative curvature operator, a noncollapsed Ricci flow always has an asymptotic shrinking soliton (c.f. \cite{Per02} and \cite{N10}). The original proofs of these asymptotic soliton theorems applied the reduced geometry method, yet they can both be proved by implementing Perelman's entropy method (c.f. \cite{CZ11} and \cite{Z2}). Hereby we would like to point out Bamler's recent groundbreaking works \cite{Bam20a}---\cite{Bam20c}, in which he largely refined the analysis of the Nash entropy (the time average of Perelman's entropy) and proved a nice structure theorem for singularity models of the Ricci flow.

In this paper, we continue the discussion initiated in \cite{Z2}, where the second author proved Perelman's assertion (section 11 in \cite{Per02})

\begin{quotation}
We impose one more requirement on the solutions; namely, we fix some $\kappa>0$ and require that $g_{ij}(t)$ to be $\kappa$-noncollapsed on all scales... \emph{It is not hard to show that this requirement is equivalent to a uniform bound on the entropy $S$}, defined as in 5.1 using an arbitrary fundamental solution to the conjugate heat equation.
\end{quotation}
In this assertion Perelman assumes bounded and nonnegative curvature operator. It is still interesting to ask whether the nonnegative curvature operator condition is necessary. In the present paper, we will show that this condition can indeed be relaxed. 

To present the statements of our main results, let us recall the several definitions. Given a complete Ricci flow $(M,g(t))_{t\in I}$, for any $x,y\in M, s,t\in I,s<t,$ we denote by 
    $K(x,t\,|\, y,s)$
the minimal heat kernel coupled with the flow, namely,
\begin{align*}
(\partial_t - \Delta_{g(t)}) K(\cdot,t\,|\,y,s)=0,
& \quad \lim_{t\downarrow s} K(x,t\,|\,y,s)
= \delta_y(x),\\
(-\partial_s - \Delta_{g(s)} + R(\cdot,s)) K(x,t\,|\,\cdot,s)=0,
& \quad \lim_{s\uparrow t} K(x,t\,|\,y,s)
= \delta_x(y).
\end{align*}
See, for example, \cite[Theorem 24.40]{RFTA-III} for the existence of such heat kernels. 

Following \cite{Per02}, we denote by $\Box:=\partial_t -\Delta_{g(t)}$ the heat operator coupled with the Ricci flow $(M,g(t))_{t\in I}$, and by $\Box^*=-\partial_t-\Delta_{g(t)}+R_{g(t)}$ the conjugate heat operator. It follows from the Stokes theorem that for any $u,v\in C^2_c(M\times I)$, i.e., $C^2$ functions over some interval $I\subset [0,T]$ with compact supports, we have
\[
    \frac{d}{dt}\int_M uv\, dg_t
    = \int_M (\Box u) v - u (\Box^* v) \, dg_t,
\]
where we denote by $dg_t$ the volume form induced by the metric $g(t)$. 

\begin{Definition}
Let $(M,g(t))_{t\in[0,T]}$ be a complete Ricci flow. Let 
\begin{equation*}
    u(x,t)=K(x_0,t_0\,|\, x,t)=(4\pi\tau)^{-\frac{n}{2}}e^{-f(x,t)}
\end{equation*} 
be the conjugate heat kernel based at $(x_0,t_0)\in M\times(0,T]$, where $\tau=t_0-t\in(0,t_0]$ is the backward time. Then Perelman's entropy and the Nash entropy based at $(x_0,t_0)$ are respectively defined as
\begin{eqnarray}
\W_{(x_0,t_0)}(\tau)&=&\int_M\Big(\tau\big(|\nabla f|^2+R\big)+f-n\Big)u\,dg_t,
\\
\N_{(x_0,t_0)}(\tau)&=&\int_Mfu\,dg_t-\frac{n}{2},\label{nashentropy}
\end{eqnarray}
for all $\tau\in(0,t_0]$.
\end{Definition}

Perelman's well-known monotonicity formula indicates that, unless $(M,g(t))$ is the static Euclidean space, in which case $\W_{(x_0,t_0)}(\tau)\equiv0$, $\W_{(x_0,t_0)}(\tau)$ is always negative and monotonically decreasing in $\tau$. Furthermore, Perelman's entropy always converges to zero at its base time, namely,
\begin{eqnarray}\label{baseW}
\lim_{\tau\rightarrow 0}\W_{(x_0,t_0)}(\tau)=0.
\end{eqnarray} 
The Nash entropy bears the same monotonicity property and satisfies (\ref{baseW}), since it is known to be the time average of Perelman's entropy
\begin{eqnarray}\label{average}
\N_{(x_0,t_0)}(\tau)=\frac{1}{\tau}\int_0^\tau \W_{(x_0,t_0)}(\eta)\,d\eta.
\end{eqnarray}

In Perelman's assertion, he mentioned the notions of \emph{bounded entropy} and \emph{noncollapsing}. They precisely mean the following.

\begin{Definition}
Let $(M,g(t))_{t\in(-\infty,0]}$ be an ancient solution. Then $g(t)$ is said to have \textit{bounded entropy}, if
\begin{eqnarray}
W:=\inf_{(x,t)\in M\times(-\infty,0];\eta>0}\W_{(x,t)}(\eta)>-\infty,
\end{eqnarray}
where the quantity $W$ is called the \emph{entropy bound}.
\end{Definition}

\begin{Definition}
Let $(M^n,g(t))$ be a complete Ricci flow. Then, $g(t)$ is called \textit{weakly $\kappa$-noncollapsed}, if, for any $r>0$, it holds that $\operatorname{Vol}_{g(t)}\big(B_{g(t)}(x,r)\big)\geq\kappa r^n$ whenever $|{\Rm}|(y,s)\leq r^{-2}$ for any $(y,s)\in B_{g(t)}(x,r)\times[t-r^2,t]$. $g(t)$ is called \textit{strongly  $\kappa$-noncollapsed}, if, for any $r>0$, it holds that $\operatorname{Vol}_{g(t)}\big(B_{g(t)}(x,r)\big)\geq\kappa r^n$ whenever $R_{g(t)}\leq r^{-2}$ on $B_{g(t)}(x,r)$. Here $B_{g(t)}(x,r)$ stands for the $g(t)$-geodesic ball centered at $x$ with radius $r$, and $\operatorname{Vol}_{g(t)}$ stands for the Riemannian volume with respect to $g(t)$.
\end{Definition}

Next, we review several definitions involved with Perelman's reduced geometry. Suppose that $(M,g(t))_{t\in [-T,0]}$ is a complete Ricci flow. Fix a base point $(p_0,t_0)\in M\times (-T,0].$
For any piecewisely smooth  curves $\gamma:[0,\tau]\to M$ with $\gamma(0)=p$ and $ t_0-\tau \ge -T,$ we define
\[
    \mathcal{L}(\gamma)
    := \int_0^\tau \sqrt{s}\left(R_{g(t_0-s)}
    + |\dot \gamma|^2_{g(t_0-s)}\right)(\gamma(s))\, ds.
\]
Then, let
\[
   L(x,\tau):= L_{(p_0,t_0)}(x,\tau) := \inf_{\gamma} \mathcal{L}(\gamma),
\]
where the infimum is taken over all piecewisely smooth curve $\gamma:[0,\tau]\to M$ with $\gamma(0)=p$ and $\gamma(\tau)=x$, and
\[
    \ell(x,\tau) :=
    \ell_{(p_0,t_0)}(x,\tau)
    :=\frac{1}{2\sqrt{\tau}}L_{(p_0,t_0)}(x,\tau) 
\]
is called the \emph{reduced distance} based at $(p_0,t_0).$ Perelman's \emph{reduced volume} based at $(p_0,t_0)$ is defined as
\[
    V_{(p_0,t_0)}(\tau)
    := (4\pi\tau)^{-n/2}
    \int_M \exp\left(-\ell_{(p_0,t_0)}(\cdot, \tau)\right)\, dg_{t_0-\tau},
\]
for all $\tau\in(0,T+t_0]$. $V_{(p_0,t_0)}(\tau)$ is known to be monotonically decreasing in $\tau$.

The main theorem of this paper is the following.

\begin{Theorem}\label{Main_Theorem_1}
Let $(M^n,g(t))_{t\in(-\infty,0]}$ be an ancient solution to the Ricci flow with bounded curvature within each compact time interval. Let $\ell$ be the reduced distance based at $(p,0)$, where $p\in M$ is a fixed point. Assume that there exists $C>0$, such that the following hold.
\begin{enumerate}[(1)]
    \item $|{\Rm}_{g(t)}|\leq CR_{g(t)}$ for all $t\in(-\infty,0]$.
    \item $\displaystyle |\nabla \ell|^2+R\leq\frac{C\ell}{\tau}$ for all $\tau\in(0,\infty)$, where $\tau=-t$ is the backward time.
\end{enumerate}
 Then $g(t)$ is $\kappa$-noncollapsed if and only if it has bounded entropy; the quantity $\kappa$ and the entropy bound are mutually dependent on.  
\end{Theorem}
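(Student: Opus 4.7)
My plan is to prove the two implications separately. The forward direction, bounded entropy implies $\kappa$-noncollapsing, is Perelman's classical no-local-collapsing argument. Given an admissible ball $B_{g(t)}(x,r)$, one constructs a logarithmic cutoff test density concentrated near $x$ at scale $r$, normalized so that $\int (4\pi r^2)^{-n/2}e^{-\tilde f}\,dg_t=1$, evaluates $\W_{(x,t+r^2)}(r^2)$ at this trial density, and uses $\W\geq W$ to force $\Vol_{g(t)}B_{g(t)}(x,r)\geq\kappa(W,n)r^n$. This step is standard and invokes neither (1) nor (2).

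The substantive direction is the converse. I would follow the template of \cite{Z2}, with (1) and (2) replacing bounded nonnegative curvature operator. The first step is to obtain a uniform positive lower bound on Perelman's reduced volume, $V_{(x_0,t_0)}(\tau)\geq v_0(\kappa,C,n)>0$, for every base $(x_0,t_0)$ and every $\tau>0$. Since $\min_M \ell(\cdot,\tau)\leq n/2$, pick $x_\tau$ with $\ell(x_\tau,\tau)\leq n/2$. Assumption (2) yields a Lipschitz-type bound $|\nabla\sqrt{\ell}|\leq \sqrt{C}/(2\sqrt\tau)$ wherever $\ell>0$, so $\ell(\cdot,\tau)\leq n$ on a ball $B:=B_{g(t_0-\tau)}(x_\tau,c\sqrt\tau)$ for some small dimensional $c$; (2) then forces $R\leq Cn/\tau$ on $B$, and (1) upgrades this to $|\Rm|\leq C'n/\tau$ on $B$. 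Applying $\kappa$-noncollapsing at scale $\sqrt\tau$ yields $\Vol_{g(t_0-\tau)}(B)\geq \kappa(c\sqrt\tau)^n$, and integrating $(4\pi\tau)^{-n/2}e^{-\ell}$ over $B$ gives $V_{(x_0,t_0)}(\tau)\geq v_0$.

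The second step converts the reduced-volume lower bound into an entropy lower bound by extracting an asymptotic shrinking gradient soliton. I would rescale $g(t_0-\tau_j s)$ by $\tau_j^{-1}$ along a sequence $\tau_j\to\infty$ and re-center at the points $x_{\tau_j}$ produced above. Conditions (1) and (2) are scale invariant, and the previous step supplies uniform curvature control on unit-scale neighborhoods of the base points in the rescaled flows, yielding a smooth pointed Cheeger--Gromov limit. The monotonicity of $V$ together with its positive lower bound forces $V(\lambda\tau_j)-V(\tau_j)\to 0$ for every fixed $\lambda>0$, and the rigidity case of Perelman's reduced volume monotonicity identifies the limit as a gradient shrinking soliton with $V_\infty\geq v_0$. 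On such a limit soliton, Perelman's entropy is controlled below in terms of $V_\infty$ alone; the monotonicity of $\W_{(x_0,t_0)}(\tau)$ in $\tau$ combined with the convergence of the rescaled entropies then transfers this bound back to a uniform lower bound on $\W_{(x_0,t_0)}(\tau)$ depending only on $\kappa$, $C$, and $n$.

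The main obstacle I anticipate lies in the compactness-and-rigidity step of the previous paragraph. Without nonnegative curvature operator, producing a smooth Ricci flow limit relies crucially on using (1) to upgrade scalar curvature bounds to full curvature bounds at the rescaling points, and the identification of the limit as a gradient shrinker, traditionally via Hamilton's trace Harnack in \cite{Z2}, must instead be accomplished using only (2) together with the limiting first-order equations for $\ell$. Once these are in place the entropy bound follows cleanly from the established dictionary between reduced volume and entropy on gradient shrinking solitons.
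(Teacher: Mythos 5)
The forward direction of your proposal matches the paper (the paper cites Proposition 3.3 of \cite{Z1} for exactly the standard no-local-collapsing argument), and the broad arc of your converse---pass to an asymptotic shrinker and bound its entropy from below---is also the paper's strategy. However, there are two genuine gaps.

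First, assumption (2) in the theorem only concerns the reduced distance $\ell$ based at the single fixed point $(p,0)$. Your Step~1 derives a uniform reduced-volume lower bound $V_{(x_0,t_0)}(\tau)\geq v_0$ for every base $(x_0,t_0)$, but to do so you apply the gradient bound (2) to $\ell_{(x_0,t_0)}$, which is not what the hypothesis grants. This is not a cosmetic issue: the theorem requires an entropy lower bound uniform over \emph{all} base points, and the paper deals with the mismatch by proving the entropy limit only for the base point $(p,0)$ (Proposition \ref{entropyconvergence}) and then invoking a separate result (Proposition \ref{asymptoticentropy}, built on Bamler's gradient estimate for the Nash entropy, established on noncompact manifolds in Section~3 and the Appendix) to show that $\lim_{\tau\to\infty}\W_{(x,t)}(\tau)$ is independent of the base point $(x,t)$. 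That transfer mechanism requires only bounded curvature on compact time intervals, not any curvature positivity or Harnack-type assumption, and is essential here; your sketch has no substitute for it.

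Second, the step where you claim ``convergence of the rescaled entropies'' to the shrinker entropy---and use it together with monotonicity to conclude---is precisely the main technical content of the paper, and you assert it rather than prove it. Without nonnegative curvature operator you cannot invoke Perelman's bounded-curvature-at-bounded-distance, and the integrals defining $\W_{(p,0)}(\tau)$ could in principle lose mass or fail to localize along the rescaling. The paper's fix is to introduce the substitute local scale $r(x,\tau)=\sqrt{\tau}\min\{c_0,\ell(x,\tau)^{-1/2}\}$ and to establish, via (2) and its consequence (\ref{gradientupper2}), the parabolic regularity package of Proposition \ref{basiclocalgeometry}; combining this with Hein--Naber's Gaussian concentration yields time-uniform Gaussian upper and lower bounds on the conjugate heat kernel (Proposition \ref{Gaussianupper} and (\ref{uquadratic})), which is what makes the entropy integrals converge to the shrinker entropy and identifies that limit with $\log V_\infty$ via Carrillo--Ni. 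Your proposal flags ``compactness-and-rigidity'' as the obstacle, but that step is comparatively routine given (\ref{quadratic}); the hard part you have skipped is the heat-kernel Gaussian upper bound, without which the entropy convergence does not follow from reduced-volume considerations alone.
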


\noindent\emph{Remarks}: 

\begin{enumerate}
\item Because of the assumptions above, the notions of strong noncollapsing and weak noncollapsing are equivalent.
 \item Assumption (2) is implied by Hamilton's trace Harnack estimate \cite{Ha95}:
\begin{eqnarray}\label{traceharnak}
\frac{\partial R}{\partial t}-2\langle X,\nabla R\rangle+2\Ric(X,X)\geq 0
\end{eqnarray}
for any smooth vector field $X$ on $M$. See the argument in Section 7.2 of \cite{Per02}.
\end{enumerate}

In \cite{Z2}, the second author used the nonnegativity of the curvature operator for two reasons. Let $(M,g(\tau))_{\tau\in[0,\infty)}$ be an ancient Ricci flow with bounded and nonnegative curvature operator, where $\tau$ is the backward time, then
\begin{enumerate}[(1)]
\item Hamilton's trace Harnack implies assumption (2) in the statement of Theorem \ref{Main_Theorem_1}. This inequality implies that the ancient solution is locally Type I wherever the reduced distance is bounded. This fact, along with the noncollapsing assumption, implies the existence of an asymptotic shrinker.
\item If, in addition to the bounded and nonnegative curvature operator condition, the ancient solution is $\kappa$-noncollapsed, then by Perelman's bounded curvature at bounded distance theorem (c.f. Section 11 of \cite{Per02}), the curvature scale $\displaystyle r(x):=R(x)^{-\frac{1}{2}}$ is comparable with the curvature radius $\displaystyle r_{{\Rm}}(x):=\sup\{s:|{\Rm}|\leq s^{-2}\text{ on } B(x,s)\}$, and hence the geometry of the parabolic cube $B_{g(\tau)}(x,r)\times[\tau-r^2,\tau+r^2]$, where $r:=R(x,\tau)^{-\frac{1}{2}}$, is bounded in terms of $r$. This fact is crucial to the point-wise estimate of the conjugate heat kernel.
\end{enumerate}

As we will find out later in this paper, among the above two points, the latter is not as essential as the former. Indeed, the main technique of proving Theorem \ref{Main_Theorem_1} is to replace the curvature scale $R^{-\frac{1}{2}}$ by $r(x)=\sqrt{\tau}\ell^{-\frac{1}{2}}$. The latter scale, though not directly related to the curvature, serves well for the purpose mentioned in point (2) above. Once this is established, we can use a similar argument as in \cite{Z2} to estimate the conjugate heat kernel. In fact, this scale is also implemented in the proof of the quadratic lower bound of $\ell$; see Lemma 3.2 in \cite{Y}. 

The proof of Theorem \ref{Main_Theorem_1} together with some of Bamler's \cite{Bam20a} results on the Nash entropy also implies the following interesting corollary, which is an extension of a result of Xu \cite{Xu17}.

\begin{Corollary}[Entropy uniqueness of the asymptotic shrinker]\label{Corollary1}
Let $(M,g(t))_{t\in(-\infty,0]}$ be a $\kappa$-noncollapsed ancient solution with bounded curvature within each compact time interval. Assume either
\begin{enumerate}[(1)]
    \item $g(t)$ has Type I curvature bound, i.e., there is a constant $C_{\rm I}<\infty$ such that
    \[
        \sup_M |{\Rm}|_{g(t)} \le \frac{C_{\rm I}}{1+|t|},
    \]
    for all $t\le 0,$
    or
    \item $|{\Rm}|_{g(t)}\leq CR_{g(t)}$ for some positive constant $C$ and for all $t\in(-\infty,0]$, and Hamilton's trace Harnack (\ref{traceharnak}) holds on $(M,g(t))$.
\end{enumerate}
Then, all asymptotic shrinkers of $(M,g(t))$ based at any point in $M\times(-\infty,0]$ have the same entropy, which is also equal to the logarithmic of the Gaussian density as defined in \cite{CHI04}. Furthermore, for any $(x_1,t_1)$ and $(x_2,t_2)\in M\times(-\infty,0]$, the following holds.
\begin{eqnarray}\label{ev}
\lim_{\tau\rightarrow\infty}\ \W_{(x_1,t_1)}(\tau)=\lim_{\tau\rightarrow\infty}\ \W_{(x_2,t_2)}(\tau)=\lim_{\tau\rightarrow\infty}\ \N_{(x_1,t_1)}(\tau)=\lim_{\tau\rightarrow\infty}\ \N_{(x_2,t_2)}(\tau)
\\\nonumber
=\lim_{\tau\rightarrow\infty}\ \log V_{(x_1,t_1)}(\tau)=\lim_{\tau\rightarrow\infty}\ \log V_{(x_2,t_2)}(\tau).
\end{eqnarray}
\end{Corollary}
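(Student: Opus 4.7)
The plan is to combine the monotonicity of the three functionals with a parabolic rescaling argument that identifies every subsequential limit with the Perelman entropy of an asymptotic shrinker, and then to eliminate the dependence on the basepoint via Bamler's Nash-entropy comparison from \cite{Bam20a}.

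\textbf{Existence of the limits.} Under either hypothesis, Perelman's entropy is uniformly bounded below: case (1) is the classical Type I noncollapsing/entropy equivalence, and case (2) is exactly Theorem \ref{Main_Theorem_1} (using Remark 2 to see that Hamilton's trace Harnack implies assumption (2) of that theorem). Hence for each basepoint $(x,t)$ the monotonically decreasing quantity $\W_{(x,t)}(\tau)$ converges to a finite limit $W_\infty(x,t)$, and by the averaging identity (\ref{average}) the Nash entropy $\N_{(x,t)}(\tau)$ tends to the same limit. The reduced volume $V_{(x,t)}(\tau)\in(0,1]$ is also monotone decreasing, and $\kappa$-noncollapsing bounds it uniformly away from $0$, so $\log V_{(x,t)}(\tau)$ converges to a finite value as well.

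\textbf{Identification via an asymptotic shrinker.} Fix $(x,t)$ and a sequence $\tau_i\to\infty$, and consider the parabolically rescaled flows $g_i(s):=\tau_i^{-1}g(t+\tau_i s)$. In case (1), Perelman's bounded-curvature-at-bounded-distance machinery applies directly; in case (2), the analogous bounded-geometry argument developed in the body of this paper applies, with the curvature scale $R^{-1/2}$ replaced by the substitute scale $\sqrt{\tau}\ell^{-1/2}$ as discussed after Theorem \ref{Main_Theorem_1}. In either case one extracts a smooth pointed subsequential limit which is a nontrivial gradient shrinking Ricci soliton $(M_\infty, g_\infty(s), (x_\infty,0), f_\infty)$. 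Scale invariance gives
\begin{equation*}
    \W^g_{(x,t)}(\tau_i)=\W^{g_i}_{(x,t)}(1),\qquad V^g_{(x,t)}(\tau_i)=V^{g_i}_{(x,t)}(1),
\end{equation*}
and convergence of the conjugate heat kernels and of $\ell$ along the sequence $g_i$ transfers the right-hand sides into the corresponding quantities on $g_\infty$. On a gradient shrinking soliton, Perelman's explicit identities give $\W = \log V$, and this common value is by definition the logarithm of the Gaussian density of \cite{CHI04}. Consequently $W_\infty(x,t)=\lim_{\tau\to\infty}\log V_{(x,t)}(\tau)$, and because this value depends only on the shrinker it is independent of the subsequence $\tau_i$; in particular every asymptotic shrinker based at $(x,t)$ carries the same entropy.

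\textbf{Basepoint independence and main difficulty.} To compare the limits at two basepoints, apply Bamler's Nash-entropy comparison estimate from \cite{Bam20a}: after parabolic rescaling by $\tau^{-1}$, the two conjugate heat kernel measures concentrate on the same bounded scale and their relative variance vanishes, yielding
\begin{equation*}
    \bigl|\N_{(x_1,t_1)}(\tau)-\N_{(x_2,t_2)}(\tau+t_1-t_2)\bigr|\longrightarrow 0\quad\text{as }\tau\to\infty.
\end{equation*}
Combined with the identifications above, this forces all six limits in (\ref{ev}) to coincide. The principal obstacle is the passage from smooth pointed convergence of the rescaled flows to genuine convergence of $\W$ and $V$, which demands uniform pointwise estimates on the conjugate heat kernels along the family $g_i(s)$. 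Under hypothesis (2) these estimates are not available from curvature alone, and must instead be built from the substitute scale $\sqrt{\tau}\ell^{-1/2}$ developed for Theorem \ref{Main_Theorem_1}, whose good behavior is guaranteed precisely by assumption (2) and the consequence of Hamilton's trace Harnack.
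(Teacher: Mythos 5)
Your proposal is correct and follows essentially the same route as the paper: the entropy lower bound via Theorem \ref{Main_Theorem_1}, the identification of $\lim\W = \lim\log V$ with the shrinker entropy via Proposition \ref{entropyconvergence} (including the Carrillo--Ni normalization comparison), and basepoint independence via Bamler's Nash-entropy comparison as in Proposition \ref{asymptoticentropy}. The only nitpicks are cosmetic --- the shift in your displayed limit should read $\N_{(x_2,t_2)}(\tau+t_2-t_1)$ rather than $\N_{(x_2,t_2)}(\tau+t_1-t_2)$, and "Perelman's explicit identities" on a shrinker is more precisely the Carrillo--Ni observation that $f_\infty$ and $\ell_\infty$ are two normalizations of the same potential --- neither of which affects the validity of the argument.
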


One may naturally ask, why assumption (2) in the statement of Theorem \ref{Main_Theorem_1} is meaningful at all, since Hamilton's trace Harnack is not known to hold without any strong curvature positivity assumption; see \cite{Ha95} and \cite{B}. In the present paper, we also show that this condition holds on steady Ricci solitons assuming only nonnegative Ricci curvature. Recall that a Riemannian manifold $(M^n,g)$ is said to have a Ricci soliton structure, if there is a smooth vector field $X$ such that
\[
    2\Ric + \mathcal{L}_X g = \lambda g,
\]
for some real number $\lambda.$ The Ricci soliton is called shrinking, steady, or expanding if $\lambda$ is positive, zero, or negative, respectively. The Ricci soliton is gradient if $X=\nabla f$ for some smooth function $f$, which is usually called the \emph{potential function}.
Let $(M^n,g,f)$ be a complete gradient Ricci soliton with potential function $f,$ that is,
\[
    \Ric + \nabla^2 f = \tfrac{\lambda}{2}g,
\]
for some real number $\lambda.$
Set $\tau(t)=1-\lambda t,$ and define $\Phi_t$ to be the $1$-parameter family of diffeomorphisms generated by $\frac{1}{\tau(t)} \nabla^{g}f$ with $\Phi_0={\rm id}$. Then
$g(t):=\tau(t)\Phi_t^*g$ is a Ricci flow and is called the \emph{canonical form} of the gradient Ricci soliton.

For the standard properties of Ricci solitons, see, for example, \cite{RFTA-I} and the references therein.

The following theorem is an application of our main theorem.

\begin{Theorem}\label{main_Thm_2}
Let $(M^n,g(\tau))_{\tau\in[0,\infty)}$ be the canonical form of a steady gradient Ricci soliton with nonnegative Ricci curvature, where $\tau$ is the backward time. 
Then Hamilton's trace Harnack inequality \eqref{traceharnak}  holds.
Consequently, if $|{\Rm}|\leq CR<\infty$ for some constant $C$, then $g(\tau)$ is $\kappa$-noncollapsed if and only if it has bounded entropy, where $\kappa$ and the entropy bound are mutually dependent on. In the case where $g(\tau)$ is non-flat and $\kappa$-noncollapsed, it has a
non-flat asymptotic shrinker.
\end{Theorem}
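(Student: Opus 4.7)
The plan has three parts, corresponding to the three assertions of the theorem. For the first part, I would verify Hamilton's trace Harnack \eqref{traceharnak} by a direct calculation using the soliton identities. On the canonical form $(M,g(t))$, the pulled-back potential $f_t:=f\circ \Phi_t$ satisfies $\Ric_{g(t)}+\nabla^2 f_t=0$, so at each time slice we have a soliton equation $\Ric+\nabla^2 f=0$ (suppressing the subscript). The standard gradient soliton identity $\nabla R=2\Ric(\nabla f,\cdot)$ then gives
\[
\frac{\partial R}{\partial t}=\langle \nabla R,\nabla f\rangle =2\Ric(\nabla f,\nabla f),
\]
and substituting into the Harnack expression, for any smooth vector field $X$,
\[
\frac{\partial R}{\partial t}-2\langle X,\nabla R\rangle+2\Ric(X,X)=2\Ric(\nabla f-X,\nabla f-X)\ge 0,
\]
where the last inequality uses only $\Ric\ge 0$. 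This proves \eqref{traceharnak}.

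For the second part, the equivalence between $\kappa$-noncollapsing and bounded entropy is a direct application of Theorem \ref{Main_Theorem_1}: assumption (1) there is our hypothesis $|\Rm|\le CR$, while assumption (2) is delivered by \eqref{traceharnak} through the argument of Section 7.2 of \cite{Per02} (integrating the Harnack along $\mathcal{L}$-geodesics), as already noted in the second remark following Theorem \ref{Main_Theorem_1}. The mutual dependence of $\kappa$ and the entropy bound is inherited from Theorem \ref{Main_Theorem_1}.

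For the third part, in the non-flat, $\kappa$-noncollapsed case, one is in the exact setting of case (2) of Corollary \ref{Corollary1}, which produces an asymptotic shrinker whose entropy equals $\lim_{\tau\to\infty}\W_{(x_0,t_0)}(\tau)$ for any basepoint. To rule out that this shrinker is the Gaussian, I would invoke Perelman's rigidity: if $\W_{(x_0,t_0)}(\tau_0)=0$ for some $\tau_0>0$, the equality case of the entropy monotonicity formula forces the conjugate heat kernel to be Gaussian and the ambient flow to be the flat Euclidean space, contradicting non-flatness. Hence $\W_{(x_0,t_0)}(\tau_0)<0$ for some $\tau_0>0$, and by monotonicity in $\tau$ the limit $\lim_{\tau\to\infty}\W_{(x_0,t_0)}(\tau)$ is strictly negative. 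By Corollary \ref{Corollary1} this limit equals the entropy of the asymptotic shrinker, which therefore cannot be the Gaussian and so is non-flat.

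The main obstacle is modest: the Harnack is a one-line consequence of the soliton identities, and the remaining assertions are obtained by direct appeal to Theorem \ref{Main_Theorem_1} and Corollary \ref{Corollary1}. The only step requiring care is confirming that the hypotheses of those earlier results genuinely hold under the assumptions of Theorem \ref{main_Thm_2}, in particular that the combination of $|\Rm|\le CR$ and the Harnack is enough to feed into the asymptotic-shrinker construction; this is essentially a matter of reading off the proofs of the cited results.
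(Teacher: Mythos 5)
Your proof is correct and follows essentially the same route as the paper: the paper's Lemma 5.1 establishes the trace Harnack by exactly the same algebraic identity $2\Ric(X\pm\nabla f, X\pm\nabla f)\ge 0$ (the sign differs only because the paper normalizes $\Ric=\nabla^2 f$ rather than $\Ric+\nabla^2 f=0$), and the remaining assertions are then read off from Theorem \ref{Main_Theorem_1} and Corollary \ref{Corollary1}. Your final paragraph makes explicit the (standard, and correct) rigidity argument for non-flatness of the asymptotic shrinker that the paper leaves implicit.
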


This paper is organized as follows.  In section 2 we show that a noncollapsed ancient solution satisfying the assumptions in Theorem \ref{Main_Theorem_1} has an asymptotic shrinker, and its entropy converges to that of the asymptotic shrinker. In section 3 we verify Bamler's gradient estimates \cite{Bam20a} on noncompact Ricci flows with bounded curvature. In section 4 we show that on an ancient solution, Perelman's entropy and the Nash entropy based at varying points must all converges to the same number; Theorem \ref{Main_Theorem_1} and Corollary \ref{Corollary1} are proved in this section. In section 5 we apply our main theorem to steady solitons with nonnegative Ricci curvature.

\section{Asymptotic shrinking gradient Ricci soliton}

Let $(M,g(\tau))_{\tau\in[0,\infty)}$ be a $\kappa$-noncollapsed ancient Ricci flow with bounded curvature within each compact time interval, where $\tau$ is the backward time, satisfying 
\begin{eqnarray}\label{pinching}
|{\Rm}|\leq CR.
\end{eqnarray}
Here and henceforth in this section, $C$ stands for a positive constant which may differ from line to line. Furthermore, we assume that $p\in M$ is a fixed point such that item (2) in the statement of Theorem \ref{Main_Theorem_1} holds for $\ell$, the reduced distance based at $(p,0)$. That is, we assume the following estimate for $\ell$.
\begin{eqnarray}\label{gradientupper1}
|\nabla \ell|^2+R\leq\frac{C\ell}{\tau}.
\end{eqnarray}
From Perelman's proof of the existence of an asymptotic shrinker (c.f. \cite{Y}), one can easily show that a noncollapsed ancient solution satisfying (\ref{pinching}) and (\ref{gradientupper1}) also has an asymptotic shrinker. Indeed, the conditions above are sufficient to prove quadratic upper and lower bounds for $\ell$ (c.f. Lemma 3.2 in \cite{Y}):
\begin{eqnarray}\label{quadratic}
\ell(x,\tau)\sim \frac{1}{\tau}\dist_\tau^2(x,y)+C,
\end{eqnarray}
where $y\in M$ is a point at which $\ell(\cdot,\tau)$ is bounded by $C$. However, it is worth mentioning that (\ref{quadratic}) itself is not sufficient to show that the integral $\displaystyle (4\pi\tau)^{-\frac{n}{2}}e^{-\ell}$ of the reduced volume is uniformly negligible outside a large ball, yet this fact can be obtained by Hein-Naber's Gaussian concentration theorem \cite{HN14}; see the argument in the line above (\ref{volume}); all these facts are sufficient to show the existence of an asymptotic shrinker.

In this section, we will show that the entropy of the ancient solution converges to that of its asymptotic shrinker. The main result of 
 this section is the following.

\begin{Proposition} \label{entropyconvergence}
Let $(M,g(\tau))_{\tau\in[0,\infty)}$ be a $\kappa$-noncollapsed ancient solution with bounded curvature within each compact time interval and satisfying (\ref{pinching}). Let $p\in M$ be a fixed point and $\ell$ the reduced distance based at $(p,0)$ such that (\ref{gradientupper1}) holds. Then, for any sequence of positive numbers $\tau_i\nearrow\infty$, if the sequence of points $\{x_i\}_{i=1}^\infty\in M$ satisfies
$$
\limsup_{i\rightarrow\infty}\ell(x_i,\tau_i)<\infty,
$$ 
then the following convergence happens after passing to a subsequence
\begin{eqnarray*}
\Big(M,g_i(\tau),(x_i,1),\ell_i\Big)_{\tau\in[1,2]}\rightarrow \Big(M_\infty,g_\infty(\tau),(x_\infty,1),\ell_\infty\Big)_{\tau\in[1,2]},
\end{eqnarray*}
where the limit is the canonical form of a Ricci shrinker, $g_i(\tau)=\tau_i^{-1}g(\tau\tau_i)$, and $\ell_i(\cdot,\tau)=\ell(\cdot,\tau\tau_i)$. Here the Ricci flows converge in the Cheeger-Gromov-Hamilton sense \cite{Ha}, and the functions $\ell_i$ converge in the weak $*W_{\text{loc}}^{1,2}(M_\infty\times[1,2])$ sense as well as in the $C_{\text{loc}}^{0,\alpha}(M_\infty\times[1,2])$ sense, with arbitrarily fixed $\alpha\in(0,1)$.

Furthermore, we have
\begin{eqnarray} \label{theequality}
\lim_{\eta\rightarrow\infty}\W_{(p,0)}(\eta)=\lim_{\eta\rightarrow\infty}\log V_{(p,0)}(\eta)=\log\left(\int_{M_\infty}(4\pi\tau)^{-\frac{n}{2}}e^{-\ell_\infty}dg_{\infty}(\tau)\right),
\end{eqnarray}\label{equality}
where $\W_{(p,0)}(\eta)$ is Perelman's entropy and $V_{(p,0)}(\eta)$ is Perelman's reduced volume, both based at $(p,0)$.
\end{Proposition}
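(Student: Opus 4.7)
The plan follows Perelman's blow-down strategy for extracting an asymptotic shrinker (cf.~\cite{Per02,Y}), with the length scale $\sqrt{\tau}\,\ell^{-1/2}$ replacing the curvature scale $R^{-1/2}$.

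\emph{Local geometry of the rescaled flows.} The first task is uniform local curvature control for $g_i(\tau)=\tau_i^{-1}g(\tau\tau_i)$ on parabolic neighborhoods of $(x_i,1)$. Since $\ell(x_i,\tau_i)\le C$, (\ref{gradientupper1}) gives $R(x_i,\tau_i)\le C/\tau_i$, hence $R_{g_i(1)}(x_i)\le C$, and the pinching (\ref{pinching}) upgrades this to $|{\Rm}_{g_i(1)}|(x_i)\le C$. The estimate $|\nabla\sqrt\ell|\le C/\sqrt\tau$ extracted from (\ref{gradientupper1}) becomes a uniform Lipschitz bound on $\sqrt{\ell_i(\cdot,1)}$, so $\ell_i(\cdot,1)$ is uniformly bounded on each fixed ball $B_{g_i(1)}(x_i,r)$; reinserting into (\ref{gradientupper1}) and (\ref{pinching}) yields uniform curvature bounds on such balls, which extend to $\tau\in[1,2]$ via Perelman's standard $\partial_\tau\ell$ estimate. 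With $\kappa$-noncollapsing providing the injectivity radius bound, Hamilton's Cheeger--Gromov--Hamilton compactness theorem produces the subsequential pointed limit $(M_\infty,g_\infty(\tau),x_\infty)_{\tau\in[1,2]}$, and Arzel\`a--Ascoli together with weak-$W^{1,2}$ compactness yields the stated convergence $\ell_i\to\ell_\infty$.

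\emph{Shrinker identification.} Reduced-volume monotonicity gives $V_{(p,0)}(\eta)\searrow V_\infty$. After a change of variables, $V_{(p,0)}(\tau\tau_i)=\int_M(4\pi\tau)^{-n/2}e^{-\ell_i(\cdot,\tau)}\,dg_i(\tau)$ for each $\tau\in[1,2]$. The quadratic lower bound (\ref{quadratic}) together with Hein--Naber's Gaussian concentration \cite{HN14} rules out mass escape at infinity, so these integrals converge to $\int_{M_\infty}(4\pi\tau)^{-n/2}e^{-\ell_\infty(\cdot,\tau)}\,dg_\infty(\tau)$; since each of the originals also tends to $V_\infty$, the limit integral is constant in $\tau\in[1,2]$. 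The subsolution property $\Box^*\bigl((4\pi\tau)^{-n/2}e^{-\ell}\bigr)\le 0$ passes to the limit, and constancy of the total integral forces equality, i.e.\ $\Ric_\infty+\nabla^2\ell_\infty=\tfrac{1}{2\tau}g_\infty$. Thus $(M_\infty,g_\infty,\ell_\infty)$ is the canonical form of a gradient shrinker with $\ell_\infty$ as potential.

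\emph{Entropy identity.} The reduced-volume limit in (\ref{theequality}) is immediate. For the entropy, monotonicity of $\W_{(p,0)}$ yields a limit $\W_\infty^*$, and Perelman's integrated monotonicity, rescaled to the flows $g_i$ (using scale-invariance of $\W$), gives
\[
\int_1^2 2\tau\!\int_M\bigl|\Ric_i+\nabla^2 f_i-\tfrac{1}{2\tau}g_i\bigr|^2 u_i\,dg_i\,d\tau = \W_{(p,0)}(\tau_i)-\W_{(p,0)}(2\tau_i)\longrightarrow 0,
\]
where $u_i=(4\pi\tau)^{-n/2}e^{-f_i}$ is the rescaled conjugate heat kernel. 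The soliton defect of $f_i$ therefore vanishes in $L^1$, so any weak limit of the probability measures $u_i\,dg_i$ on $M_\infty$ is a normalized soliton measure; uniqueness of the shrinker potential up to a constant identifies it with $(4\pi\tau)^{-n/2}e^{-(\ell_\infty+\log V_\infty)}dg_\infty$. A direct integration by parts on the shrinker---using the identity $\int[\tau(|\nabla\ell_\infty|^2+R_\infty)+\ell_\infty-n](4\pi\tau)^{-n/2}e^{-\ell_\infty}\,dg_\infty=0$---evaluates the entropy with this normalized potential to $\log V_\infty$, yielding $\W_\infty^*=\log V_\infty$. The principal technical obstacle is this last identification: controlling the rescaled heat kernels sufficiently to pass them to their weak limit without yet having at hand the gradient estimates of Section~3.
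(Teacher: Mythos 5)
Your treatment of the first half of the proposition (local geometry via the $\sqrt{\tau}\,\ell^{-1/2}$ scale, noncollapsing, Cheeger--Gromov--Hamilton compactness, subsolution-to-solution rigidity of the rescaled reduced-volume integrand) matches the paper's route and is sound; the paper itself calls this part ``almost straightforward'' and defers to the comments at the start of Section~2.

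Where you genuinely diverge is in the proof of (\ref{theequality}), and here there is a real gap, which you yourself flag at the end. You attempt to work entirely with the probability measures $u_i\,dg_i$ as weak limits and invoke the integrated $\W$-monotonicity
$\int_1^2 2\tau\int_M |\Ric_i+\nabla^2 f_i-\tfrac{1}{2\tau}g_i|^2 u_i\,dg_i\,d\tau\to 0$
to conclude that the limit is a normalized soliton measure. The difficulty is that $L^1(u_i\,dg_i)$-vanishing of the soliton defect does \emph{not} pass to a weak-$*$ limit of measures: to conclude that the limit is $v_\infty\,dg_\infty$ with $v_\infty$ a smooth density satisfying $\Ric_\infty+\nabla^2(-\log v_\infty)=\tfrac{1}{2\tau}g_\infty$, you need local $C^2$ (at least $W^{2,2}_{\mathrm{loc}}$) control on the potentials $f_i=-\log\bigl((4\pi\tau)^{n/2}u_i\bigr)$. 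Without pointwise upper bounds on $u_i$, the $f_i$ could become locally unbounded, and the quantity inside the integral is weighted by $u_i$, so a.e.\ pointwise vanishing along a subsequence is not available either. Likewise the step ``uniqueness of the shrinker potential up to a constant identifies it with $(4\pi\tau)^{-n/2}e^{-(\ell_\infty+\log V_\infty)}$'' presupposes you already know the limit has a smooth potential, which is exactly what you have not yet established.

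The paper closes this gap by \emph{first} proving a time-uniform Gaussian upper bound on the conjugate heat kernel (Proposition~\ref{Gaussianupper}), using the $\ell$-based scale $r(x,\tau)=\sqrt{\tau}\min\{c_0,\ell^{-1/2}\}$ as a surrogate curvature scale (Lemmas~\ref{withinslice}, \ref{variabletime}, Proposition~\ref{basiclocalgeometry}), Hein--Naber's concentration to get integral smallness, and the parabolic mean-value inequality to convert to a pointwise bound. Combined with the lower bound $(4\pi\tau)^{-n/2}e^{-\ell}\le u$, this gives the two-sided bound (\ref{uquadratic}), which after rescaling becomes the quadratic control (\ref{fquadratic}) on $f_i$. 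From there, derivative estimates (as in Proposition~3.3 of \cite{Z2}) yield \emph{locally smooth} convergence $f_i\to f_\infty$, and only then does the constancy of the limiting entropy force $f_\infty$ to be a shrinker potential. The comparison of the two normalizations $f_\infty$ and $\ell_\infty$ (Carrillo--Ni \cite{CN09}) then produces the first equality in (\ref{theequality}). In short, the Gaussian heat-kernel bound is not an optional refinement; it is the mechanism that upgrades measure-theoretic convergence to the smooth convergence your argument implicitly requires.

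One smaller point: you write that your use of Hein--Naber requires the quadratic lower bound on $\ell$. In the paper, the Gaussian concentration theorem is applied directly to $\nu_\tau=u\,dg_\tau$ (no quadratic $\ell$ bound needed for that step); the $\ell$-quadratic bounds enter separately, both as a consequence of the heat kernel estimate and as input to the derivative bounds on $f_i$.
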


\emph{Remark:} As indicated above, due to (\ref{pinching}), (\ref{gradientupper1}), and (\ref{quadratic}), the existence statement of an asymptotic shrinker in Proposition \ref{entropyconvergence} is almost straightforward.  The main focus of Proposition \ref{entropyconvergence} is the equality (\ref{theequality}). Xu \cite{Xu17} first proved it for Type I noncollapsed ancient solutions, and the second author \cite{Z2} proved it in the bounded and nonnegative curvature operator case.

In the rest of this section, 
\begin{eqnarray}\label{kernel}
u:=(4\pi\tau)^{-\frac{n}{2}}e^{-f}
\end{eqnarray}
denotes the conjugate heat kernel based at $(p,0)$---the same base point as that of $\ell$. Similar to the idea in \cite{Z2}, the major effort of proving (\ref{theequality}) is to obtain the Gaussian upper and lower bounds for $u$.

\begin{Lemma}
\begin{eqnarray}
(4\pi\tau)^{-\frac{n}{2}}e^{-\ell}&\leq&u,\label{subsolution}
\\
\inf_{M}\ell(\cdot,\tau)&\leq&\frac{n}{2}. \label{lmin}
\end{eqnarray}
\end{Lemma}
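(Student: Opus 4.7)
\emph{Proof proposal.} Both inequalities are classical facts from Perelman's reduced geometry, and my plan is to reproduce them in the present noncompact setting using the quadratic bounds (\ref{quadratic}) established earlier in Section 2 as the source of coercivity at spatial infinity.

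For (\ref{subsolution}), I would use the standard subsolution-versus-fundamental-solution comparison. A direct computation shows that the bound $\Box^* v \le 0$ for $v := (4\pi\tau)^{-n/2}e^{-\ell}$ is equivalent to Perelman's differential inequality
\[
  \partial_\tau \ell - \Delta \ell + |\nabla \ell|^2 - R + \frac{n}{2\tau} \ge 0,
\]
which he established in the barrier/distributional sense via the first and second variation formulas for $\mathcal L$-length (the inequality holds classically away from the $\mathcal L$-cut locus of $p$). Both $v$ and the heat kernel $u$ concentrate at $p$ as $\tau \to 0^+$, thanks to the standard small-time asymptotic $\ell(x,\tau)\sim \dist_\tau(x,p)^2/(4\tau)$, so they share the same limiting initial datum $\delta_p$. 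Combined with the Gaussian decay of $v$ at spatial infinity supplied by (\ref{quadratic}) and the bounded-curvature-on-compact-time-intervals hypothesis, the parabolic maximum principle for the conjugate heat equation on the noncompact manifold then yields $v \le u$.

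For (\ref{lmin}), I would reprove Perelman's Corollary 9.3 in the present setting. The quadratic lower bound in (\ref{quadratic}) forces $\ell(\cdot,\tau) \to +\infty$ at spatial infinity, so the spatial infimum is attained at some $x_\tau\in M$; in particular there is an interior minimum for the maximum principle to act on. The plan is then to apply the parabolic maximum principle to $L(\cdot,\tau)$ (or the modification $L - 2n\sqrt{\tau}$), using Perelman's trace Laplacian inequality for $L$ derived from the $\mathcal L$-length variation formulas. Exploiting the vanishing $L(p,\tau) = O(\tau^{3/2})$ along the static $\mathcal L$-curve $\gamma \equiv p$ at the base point as the initial condition, one obtains $L_{\min}(\tau) \le 2n\sqrt{\tau}$, which after dividing by $2\sqrt{\tau}$ is precisely the desired bound $\ell_{\min}(\tau) \le n/2$.

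The main obstacle is the interaction of noncompactness with the low (merely locally Lipschitz) regularity of $\ell$: both differential inequalities must be interpreted in the barrier sense, and both maximum principles require some coercivity at spatial infinity. The quadratic estimates (\ref{quadratic}), already established in Section 2, are precisely tailored to furnish this coercivity, so the standard arguments go through without genuinely new input.
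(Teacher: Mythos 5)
Your overall strategy (subsolution comparison for (\ref{subsolution}), Perelman's maximum principle for (\ref{lmin})) matches the paper's, which states this as a standard fact and cites Perelman. However, there is a circularity in the way you invoke (\ref{quadratic}): the quadratic bound in (\ref{quadratic}) is formulated relative to a point $y$ at which $\ell(\cdot,\tau)$ is bounded by $C$, and the existence of such a point is precisely what (\ref{lmin}) is supposed to deliver (it is what lets the authors define $p_\tau$ with $\ell(p_\tau,\tau)\le n/2$ right after the lemma). You cannot use the quadratic lower bound on $\ell$ to conclude the infimum is attained and thereby prove (\ref{lmin}), because that lower bound is anchored at a point whose existence you have not yet justified. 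The clean route, which is what the paper and Perelman do, avoids (\ref{quadratic}) entirely: one applies the maximum principle directly to $\bar L(q,\tau):=2\sqrt{\tau}\,L(q,\tau)$, which satisfies $\partial_\tau \bar L+\Delta\bar L\le 2n$ in the barrier sense, together with $\bar L\to 0$ as $\tau\to 0^+$ at the base point. This gives $\min_M\bar L(\cdot,\tau)\le 2n\tau$, i.e.\ $L_{\min}(\tau)\le n\sqrt{\tau}$, hence $\ell_{\min}(\tau)\le n/2$. Note in passing that your claimed intermediate bound $L_{\min}(\tau)\le 2n\sqrt{\tau}$ yields $\ell_{\min}\le n$ after dividing by $2\sqrt{\tau}$, not $n/2$; the correct weight is on $L$, not on $\ell$, and the resulting bound is $L_{\min}\le n\sqrt{\tau}$.

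For (\ref{subsolution}) your plan is sound but again invokes (\ref{quadratic}) for coercivity where the simpler and more standard argument does not need it: since $u=K(p,0\,|\,\cdot,\cdot)$ is by definition the minimal positive fundamental solution on a complete flow with bounded curvature, and $v:=(4\pi\tau)^{-n/2}e^{-\ell}$ is a nonnegative subsolution of the conjugate heat equation with $v\to\delta_p$ and $\int_M v\,dg_\tau\le 1$ (reduced-volume monotonicity), a duality argument against compactly supported test solutions of the coupled heat equation immediately gives $v\le u$; this only uses the curvature bound, never (\ref{quadratic}). Tying the lemma to (\ref{quadratic}) makes it appear to depend on the special hypotheses (\ref{pinching}) and (\ref{gradientupper1}), when in fact (\ref{subsolution}) and (\ref{lmin}) are general Perelman facts valid on any complete ancient flow with bounded curvature; keeping that logical independence explicit is part of why the paper's proof is so short.
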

\begin{proof}
 (\ref{subsolution}) follows from the fact that the left-hand-side is a subsolution to the conjugate heat equation, and when $\tau\rightarrow 0+$, both sides converge to the Dirac delta measure concentrated at $p\in M$. (\ref{lmin}) simply follows from a maximum principle; see \cite{Per02}. 
\end{proof}

Henceforth we will use $p_{\tau}\in M$ to denote a point satisfying $\ell(p_\tau,\tau)\leq \frac{n}{2}$.

\begin{Lemma}
\begin{eqnarray}\label{gradientupper2}
\left|\frac{\partial \ell}{\partial \tau}\right|\leq \frac{C\ell}{\tau}.
\end{eqnarray}
\end{Lemma}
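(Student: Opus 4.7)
The plan is to combine Perelman's pointwise identity for $\partial_\tau\ell$ with the two standing hypotheses (\ref{pinching}) and (\ref{gradientupper1}). Along a minimal $\mathcal{L}$-geodesic $\gamma$ from $p$ to $(x,\tau)$, the fundamental theorem of calculus applied to $L(\gamma(\tau),\tau)=\int_0^\tau\sqrt{s}(R+|\dot\gamma|^2)\,ds$ together with the factor $1/(2\sqrt{\tau})$ gives
\begin{equation*}
\frac{d}{d\tau}\ell(\gamma(\tau),\tau)\;=\;\frac{R+|\dot\gamma|^2}{2}\;-\;\frac{\ell}{2\tau}.
\end{equation*}
At points where $\ell$ is differentiable one has $\nabla\ell=\dot\gamma$ at the endpoint, so the chain rule $\frac{d}{d\tau}\ell(\gamma(\tau),\tau)=\partial_\tau\ell+|\nabla\ell|^2$ rearranges into the pointwise identity
\begin{equation*}
\frac{\partial\ell}{\partial\tau}\;=\;\frac{R-|\nabla\ell|^2}{2}\;-\;\frac{\ell}{2\tau},
\end{equation*}
which holds almost everywhere by the local Lipschitz regularity of $\ell$.

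Next, I would note that hypothesis (\ref{pinching}), $|\Rm|\le CR$, forces $R\ge 0$ everywhere: at a point with $R<0$ one would have $|R|\le|\Rm|\le CR<0$, which is absurd. Hence $L\ge 0$ by definition, so $\ell\ge 0$, and (\ref{gradientupper1}) then delivers the individual bounds $R\le C\ell/\tau$ and $|\nabla\ell|^2\le C\ell/\tau$, since both summands on the left-hand side are nonnegative. Substituting these into the identity yields the two-sided control
\begin{equation*}
\left|\frac{\partial\ell}{\partial\tau}\right|\;\le\;\frac{\max\{R,|\nabla\ell|^2\}}{2}+\frac{\ell}{2\tau}\;\le\;\frac{(C+1)\ell}{2\tau},
\end{equation*}
which is exactly (\ref{gradientupper2}) after absorbing the constant.

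I do not anticipate a serious obstacle: the whole argument reduces to one well-known identity plus elementary arithmetic on the two hypotheses. The only mild technicality is the non-smoothness of $\ell$, but the bound holds almost everywhere (equivalently, in the barrier sense at non-smooth points), which is precisely how the estimate is used in the rest of Section~2. Notably, no independent input such as Chen's $R\ge 0$ theorem is needed, since nonnegativity of $R$ is already built into (\ref{pinching}).
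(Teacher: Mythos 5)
Your proof is correct and follows essentially the same route as the paper: both rest on Perelman's identity $2\partial_\tau\ell + |\nabla\ell|^2 - R + \ell/\tau = 0$ combined with the standing bound $|\nabla\ell|^2 + R \le C\ell/\tau$. You add two helpful bits of explicitness that the paper leaves implicit — a short derivation of the identity from the $\mathcal{L}$-geodesic, and the observation that $|\Rm|\le CR$ already forces $R\ge 0$ (so that $R$ and $|\nabla\ell|^2$ are each individually bounded by $C\ell/\tau$), which is precisely what the two-sided estimate on $\partial_\tau\ell$ needs.
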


\begin{proof}
This inequality follows from the assumption (\ref{gradientupper1}), in combination with the following well-known formula of Perelman (see Lemma 2.22 in \cite{Y})
\begin{eqnarray*}
2\frac{\partial \ell}{\partial \tau}+|\nabla \ell|^2-R+\frac{\ell}{\tau}=0.
\end{eqnarray*}
\end{proof}

In view of inequalities (\ref{gradientupper1}) and (\ref{gradientupper2}), let us define the scale
\begin{eqnarray*}
r(x,\tau)=\sqrt{\tau}\min\{c_0, \ell(x,\tau)^{-\frac{1}{2}}\},
\end{eqnarray*}
where $c_0\in(0,\frac{1}{2})$ is a small constant which we will determine in the course of the proof. Note that $\ell$ is always positive on ancient solutions, hence $r$ is well-defined. Let us then consider the parabolic neighborhood centered at $(x,\tau)$ with radius $r(x,\tau)$.
\begin{Lemma}\label{withinslice}
If $c_0$ is taken to be small enough, then the following holds. Let $(x_0,\tau_0)\in M\times(0,\infty)$ and $r_0:=r(x_0,\tau_0)$. Then
\begin{eqnarray}
r(y,\tau_0)\geq \frac{4}{5}r_0,
\quad\text{ for all }\quad  y\in B_{g(\tau_0)}(x_0,r_0).
\end{eqnarray}
\end{Lemma}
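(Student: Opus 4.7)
The plan is to derive a Lipschitz estimate on $\sqrt{\ell(\cdot,\tau_0)}$ from assumption (2), then perform a short case analysis depending on which of the two terms inside the minimum defining $r$ is active at $x_0$. Concretely, wherever $\ell(\cdot,\tau_0)$ is smooth, assumption (2) gives
\[
    \bigl|\nabla\sqrt{\ell(\cdot,\tau_0)}\bigr|_{g(\tau_0)} = \frac{|\nabla \ell|}{2\sqrt{\ell}} \leq \frac{\sqrt{C}}{2\sqrt{\tau_0}}.
\]
Since $\ell(\cdot,\tau_0)$ is locally Lipschitz (in fact locally semi-concave, so differentiable a.e.\ with the classical gradient satisfying the above bound), integrating along a minimizing $g(\tau_0)$-geodesic from $x_0$ to $y$ (which stays in the ball $B_{g(\tau_0)}(x_0,r_0)$ since its length is $d_{g(\tau_0)}(x_0,y)<r_0$) produces
\[
    \bigl|\sqrt{\ell(y,\tau_0)}-\sqrt{\ell(x_0,\tau_0)}\bigr| \leq \frac{\sqrt{C}}{2\sqrt{\tau_0}}\,d_{g(\tau_0)}(x_0,y) < \frac{\sqrt{C}}{2\sqrt{\tau_0}}\,r_0.
\]

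I would then split into two cases based on the definition of $r_0$. In the first case $\ell(x_0,\tau_0)\leq c_0^{-2}$, so $r_0=c_0\sqrt{\tau_0}$ and the estimate above gives $\sqrt{\ell(y,\tau_0)}\leq 1/c_0+c_0\sqrt{C}/2$. Choosing $c_0$ small enough that $c_0^2\leq (2\sqrt{C})^{-1}$, this implies $\sqrt{\ell(y,\tau_0)}\leq 5/(4c_0)$, i.e., $\ell(y,\tau_0)^{-1/2}\geq 4c_0/5$, whence $r(y,\tau_0)=\sqrt{\tau_0}\min\{c_0,\ell(y,\tau_0)^{-1/2}\}\geq \frac{4c_0}{5}\sqrt{\tau_0}=\frac{4}{5}r_0$.

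In the second case $\ell(x_0,\tau_0)>c_0^{-2}$, so $r_0=\sqrt{\tau_0/\ell(x_0,\tau_0)}$ and the Lipschitz bound becomes the sharper
\[
    \bigl|\sqrt{\ell(y,\tau_0)}-\sqrt{\ell(x_0,\tau_0)}\bigr| \leq \frac{\sqrt{C}}{2\sqrt{\ell(x_0,\tau_0)}} = \frac{\sqrt{C}}{2\,\ell(x_0,\tau_0)}\sqrt{\ell(x_0,\tau_0)} < \frac{c_0^2\sqrt{C}}{2}\sqrt{\ell(x_0,\tau_0)}.
\]
With the same choice of $c_0$ this yields $\sqrt{\ell(y,\tau_0)}\leq \frac{5}{4}\sqrt{\ell(x_0,\tau_0)}$, so $\ell(y,\tau_0)^{-1/2}\geq \frac{4}{5}\ell(x_0,\tau_0)^{-1/2}$. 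Again, whether the minimum in $r(y,\tau_0)$ is realized by $c_0$ or by $\ell(y,\tau_0)^{-1/2}$ (in the former case one even gets $r(y,\tau_0)\geq r_0$), one concludes $r(y,\tau_0)\geq \frac{4}{5}r_0$.

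I do not anticipate any substantive obstacle: the whole lemma reduces to a triangle-inequality style computation once one observes that $\sqrt{\ell}$ is uniformly Lipschitz in space with the time-dependent constant $\sqrt{C}/(2\sqrt{\tau})$. The only technical point worth flagging is that the pointwise gradient bound from assumption (2) holds only on the smooth locus of $\ell$, but the complement has measure zero and the local semi-concavity of $\ell$ allows one to upgrade the a.e.\ gradient bound to an honest Lipschitz estimate along any minimizing geodesic. Taking $c_0:=\min\{1/2,\,(2\sqrt{C})^{-1/2}\}$ then suffices for both cases.
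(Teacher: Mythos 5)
Your proof is correct and takes essentially the same approach as the paper: derive a Lipschitz bound from assumption (2), integrate along a minimizing $g(\tau_0)$-geodesic, and shrink $c_0$. The only cosmetic difference is that the paper works directly with the single function $r^{-1}=\max\{(c_0\sqrt{\tau})^{-1},\sqrt{\ell/\tau}\}$, for which the bound $|\nabla r^{-1}|\le C/(2\tau)$ absorbs the two regimes at once and avoids the two-case analysis; you instead bound $|\nabla\sqrt{\ell}|$ and then split on whether the $\min$ in $r(x_0,\tau_0)$ is attained by $c_0$ or by $\ell^{-1/2}$.
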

\begin{proof}
By  (\ref{gradientupper1}) and the definition of $r$, 
if ever $r(\cdot,\tau)<c_0\sqrt{\tau}$,
then the following computation is valid.
\begin{eqnarray*}
|\nabla r|=\tau^{\frac{1}{2}}|\nabla \ell^{-\frac{1}{2}}|=\frac{\tau^{\frac{1}{2}}}{2\ell^{\frac{3}{2}}}|\nabla \ell|\leq\frac{C}{2}\ell^{-1}=\frac{C}{2\tau}r^2,
\end{eqnarray*}
namely,
\begin{eqnarray*}
|\nabla r^{-1}|\leq \frac{C}{2\tau}\quad \text{ whenever } \quad r^{-1}>\frac{1}{c_0\sqrt{\tau}}.
\end{eqnarray*}
Therefore, for any $y\in B_{g(\tau_0)}(x_0,r_0)$, integrating the above inequality along a $g(\tau_0)$-geodesic connecting $x_0$ and $y$, we have
\begin{eqnarray*}
\frac{1}{r(y,\tau_0)}\leq \frac{1}{r_0}+\frac{C}{2\tau_0}r_0=\frac{1}{r_0}+\frac{C r_0^2}{2\tau_0}\frac{1}{r_0}
\leq\left(1+\frac{Cc_0^2}{2}\right)\frac{1}{r_0}\leq\frac{5}{4 r_0},
\end{eqnarray*}
where the last inequality holds if $c_0$ is taken to be small enough. We have also used the fact that $r_0\leq c_0\sqrt{\tau_0}$ by definition.
\end{proof}

\begin{Lemma}\label{variabletime}
If $c_0$ is taken to be small enough, then the following holds. Let $(x_0,\tau_0)\in M\times(0,\infty)$ and $r_0:=r(x_0,\tau_0)$. Then
\begin{eqnarray*}
r(x_0,\tau)\geq\frac{4}{5}r_0 \text{ for all }\tau\in[\tau_0-r_0^2,\tau_0+r_0^2]
\end{eqnarray*}
\end{Lemma}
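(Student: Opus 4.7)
The plan is to mimic the proof of Lemma \ref{withinslice}, but with the time-derivative estimate \eqref{gradientupper2} replacing the spatial gradient estimate \eqref{gradientupper1}. The one new wrinkle is that the backward time $\tau$ appears explicitly in the definition of $r$, so I must keep track of whether the minimum in $\min\{c_0,\ell^{-1/2}\}$ is active. The cleanest way to handle this is to work with the reciprocal quantity
\[
    \rho(\tau):=\frac{\tau}{r(x_0,\tau)^2}=\max\bigl\{c_0^{-2},\,\ell(x_0,\tau)\bigr\},
\]
which is a locally Lipschitz function of $\tau$.

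First I would show that $|\partial_\tau\log\rho|\le C/\tau$ almost everywhere in $\tau$. On the open set where $\ell(x_0,\tau)>c_0^{-2}$ we have $\rho=\ell$ and the bound follows directly from \eqref{gradientupper2}, since $|\partial_\tau\rho|\le C\ell/\tau=C\rho/\tau$. On the open set where $\ell(x_0,\tau)<c_0^{-2}$ the function $\rho$ is constant, so the bound is trivial. Since $\rho=\max\{c_0^{-2},\ell\}$ is Lipschitz and the derivative a.e.\ equals that of whichever argument is larger, the bound holds a.e.\ on the whole interval.

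Next I would integrate. For $\tau\in[\tau_0-r_0^2,\tau_0+r_0^2]$, the hypothesis $r_0\le c_0\sqrt{\tau_0}$ gives $|\tau-\tau_0|/\tau_0\le c_0^2$, so $\tau\ge\tau_0(1-c_0^2)\ge\tau_0/2$ provided $c_0$ is small, and $|\log(\tau/\tau_0)|\le 2c_0^2$. Integrating $|\partial_\tau\log\rho|\le C/\tau$ between $\tau_0$ and $\tau$ yields
\[
    \bigl|\log\bigl(\rho(\tau)/\rho(\tau_0)\bigr)\bigr|\le C\,|\log(\tau/\tau_0)|\le 2Cc_0^2.
\]
Using $r^2(\tau)=\tau/\rho(\tau)$, we then obtain
\[
    \frac{r(x_0,\tau)^2}{r_0^2}
    =\frac{\tau}{\tau_0}\cdot\frac{\rho(\tau_0)}{\rho(\tau)}
    \ge(1-c_0^2)\,e^{-2Cc_0^2},
\]
which exceeds $(4/5)^2=16/25$ once $c_0$ is taken sufficiently small. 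Taking square roots finishes the proof.

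There is no substantive obstacle here; the main thing to be careful about is the non-smoothness of $\rho$ at the transition set $\{\ell(x_0,\cdot)=c_0^{-2}\}$, which is handled by the standard chain-rule for maxima of Lipschitz functions. One could equally well split into the two cases (cap active or not) at the endpoints $\tau,\tau_0$ and handle them separately with the same integration argument, but the $\rho$-formulation unifies them and avoids case analysis.
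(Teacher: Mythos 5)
Your proposal is correct and follows essentially the same route as the paper: both bound the logarithmic $\tau$-derivative of the scale by $C/\tau$ using \eqref{gradientupper2} and then integrate over the interval $[\tau_0-r_0^2,\tau_0+r_0^2]\subset[(1-c_0^2)\tau_0,(1+c_0^2)\tau_0]$. Your $\rho=\max\{c_0^{-2},\ell\}$ reformulation is a slightly tidier way to deal with the cap in the definition of $r$ than the paper's restriction ``whenever $\log(r/\sqrt{\tau})<\log c_0$,'' but the underlying computation is identical.
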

\begin{proof}
By  (\ref{gradientupper2}) and the definition of $r$, so long as $r(\cdot,\tau)<c_0\sqrt{\tau}$, the following computation is valid
\begin{eqnarray*}
\left|\frac{\partial}{\partial\tau}r\right|&=&\frac{1}{2}\left|\frac{1}{\tau^{\frac{1}{2}}\ell^{\frac{1}{2}}}-\frac{\tau^{\frac{1}{2}}}{\ell^{\frac{3}{2}}}\frac{\partial}{\partial\tau}\ell\right|
\\
&\leq&\frac{C}{\tau^{\frac{1}{2}}\ell^{\frac{1}{2}}}
=\frac{C}{\tau}r,
\end{eqnarray*}
namely,
\begin{eqnarray*}
\left|\frac{\partial}{\partial\tau}\log\left(\frac{r}{\sqrt{\tau}}\right)\right|\leq \frac{C}{\tau}\quad \text{ whenever }\quad \log\left(\frac{r}{\sqrt{\tau}}\right)<\log c_0\ll0.
\end{eqnarray*}
Integrating the above inequality from $\tau_0$ to $\tau\in[\tau_0-r_0^2,\tau_0+r_0^2]$, we have
\begin{eqnarray*}
\log\left(\frac{r(x_0,\tau)}{\sqrt{\tau}}\right)&\geq&\log\left(\frac{r(x_0,\tau_0)}{\sqrt{\tau_0}}\right)-C\left|\int_{\tau}^{\tau_0}\frac{1}{\eta}d\eta\right|
\\
&\geq&\log\left(\frac{r(x_0,\tau_0)}{\sqrt{\tau_0}}\right)-C\log\left(\frac{1+c_0^2}{1-c_0^2}\right),
\end{eqnarray*}
where the last inequality is because of the fact $\tau\in[\tau_0-r_0^2,\tau_0+r_0^2]\subset\big[(1-c_0^2)\tau_0,(1+c_0^2)\tau_0\big]$. Therefore, taking $c_0$ to be small enough, the lemma follows.
\end{proof}

Summarizing the previous two lemmas, we obtain the following result. This shows that the scale $r$ which we defined above serves effectively as a local curvature scale. This fact can make up for the lack of bounded curvature at bounded distance.

\begin{Proposition}\label{basiclocalgeometry}
Let $c_0$ be a small constant such that Lemma \ref{withinslice} and Lemma \ref{variabletime} both hold. Let $(x_0,\tau_0)\in M\times(0,\infty)$ and $r_0=r(x_0,\tau_0)$. Then the following are true.
\begin{enumerate}[(1)]
\item $\displaystyle r\geq\frac{1}{2}r_0 \text{ on } B_{g(\tau_0)}\left(x_0,r_0\right)\times\left[\tau_0-r_0^2,\tau_0+r_0^2\right]$,

\item $\displaystyle |{\Rm}|\leq Cr_0^{-2} \text{ on } B_{g(\tau_0)}\left(x_0,r_0\right)\times\left[\tau_0-r_0^2,\tau_0+r_0^2\right]$,

\item $\displaystyle\left|\partial^p_\tau\nabla^q {\Rm}\right|\leq C_{p,q}r_0^{-2-2p-q} \text{ on } B_{g(\tau_0)}\left(x,\tfrac{1}{2}r_0\right)\times\left[\tau-\tfrac{1}{4}r_0^2,\tau+\tfrac{1}{4}r_0^2\right]$, where $C_{p,q}$ is a constant depending on $p$ and $q$.

\item ${\rm Vol}\Big(B_{g(\tau_0)}(x_0,r_0)\Big)\geq c r_0^n$, where $c$ is a constant depending on $\kappa$.

\end{enumerate}
\end{Proposition}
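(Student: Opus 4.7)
The plan is to prove the four assertions in the order stated, with (1) as the central point and (2)--(4) as relatively direct consequences of (1) together with the pinching hypothesis, Shi's derivative estimates, and the $\kappa$-noncollapsing assumption, respectively. The key step for (1) will be to pass from $r$ to the smoother auxiliary scale $\tilde r(x,\tau):=\sqrt\tau\,\ell(x,\tau)^{-1/2}$, for which $r=\min(c_0\sqrt\tau,\tilde r)$. A direct computation from hypotheses \eqref{gradientupper1} and \eqref{gradientupper2} shows that $|\nabla\tilde r^{-1}|\leq C/(2\tau)$ and $|\partial_\tau\log(\tilde r/\sqrt\tau)|\leq C/(2\tau)$ hold \emph{everywhere}, without the auxiliary restriction $r<c_0\sqrt\tau$ that appeared in Lemmas \ref{withinslice} and \ref{variabletime}. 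This unconditional behavior lets me freely chain a spatial integration from $(x_0,\tau_0)$ to $(y,\tau_0)$ with a temporal integration from $(y,\tau_0)$ to $(y,\tau)$, reproducing the $4/5$-type losses of those lemmas and yielding, once $c_0$ is taken small, $\tilde r(y,\tau)\geq r_0/2$ for every $(y,\tau)\in P:=B_{g(\tau_0)}(x_0,r_0)\times[\tau_0-r_0^2,\tau_0+r_0^2]$. Combining with the trivial bound $c_0\sqrt\tau\geq c_0\sqrt{(1-c_0^2)\tau_0}\geq r_0/2$ then gives $r(y,\tau)=\min(c_0\sqrt\tau,\tilde r(y,\tau))\geq r_0/2$, which is (1).

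For (2), inspecting both branches of the $\min$ defining $r$ shows that $\ell/\tau\leq 1/r^2$ on all of $M\times(0,\infty)$, so by \eqref{gradientupper1} and the pinching $|{\Rm}|\leq CR$ we obtain $|{\Rm}|\leq CR\leq C\ell/\tau\leq C/r^2$, and (1) upgrades this to $|{\Rm}|\leq 4C/r_0^2$ on $P$. Assertion (3) will then follow from Shi's local derivative estimates on the slightly smaller parabolic cube, given the curvature bound from (2) and the parabolic scale $r_0$. For (4), I will invoke weak $\kappa$-noncollapsing at the scale $r':=r_0/\sqrt C$: by (2), $|{\Rm}|\leq (r')^{-2}$ on $B_{g(\tau_0)}(x_0,r')\times[\tau_0,\tau_0+(r')^2]$, which is a past parabolic cube in forward time $t=-\tau$, so weak noncollapsing yields $\Vol_{g(\tau_0)}(B_{g(\tau_0)}(x_0,r'))\geq\kappa(r')^n$, whence $\Vol_{g(\tau_0)}(B_{g(\tau_0)}(x_0,r_0))\geq(\kappa/C^{n/2})r_0^n$.

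The main obstacle is the chaining step in (1). A direct two-step iteration of Lemmas \ref{withinslice} and \ref{variabletime} on the same path fails, because Lemma \ref{variabletime} applied at $(y,\tau_0)$ with its own scale $r(y,\tau_0)\leq r_0$ only controls a time interval strictly shorter than the required $[\tau_0-r_0^2,\tau_0+r_0^2]$. Replacing $r$ with the smooth auxiliary $\tilde r$, whose differential bounds hold unconditionally, removes this obstruction; the $\min$ with $c_0\sqrt\tau$ is then reinstated at the end, since $c_0\sqrt\tau\geq r_0/2$ holds automatically throughout $P$.
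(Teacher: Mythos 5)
Your proposal is correct and its overall route is the same as the paper's, which simply declares that (1) is ``a combination of Lemma~\ref{withinslice} and Lemma~\ref{variabletime},'' that (2) follows from the definition of $r$ together with (\ref{pinching}) and (\ref{gradientupper1}), that (3) follows from localized Shi estimates, and that (4) follows from (2) and $\kappa$-noncollapsing; your treatments of (2)--(4) agree with this. Where you go beyond the paper is in item (1), and the extra care is warranted: applying Lemma~\ref{withinslice} and then Lemma~\ref{variabletime} \emph{as stated}, at $(y,\tau_0)$ with its own scale $r(y,\tau_0)\geq\tfrac45 r_0$, only controls $r(y,\cdot)$ on $[\tau_0-r(y,\tau_0)^2,\tau_0+r(y,\tau_0)^2]$, which can lie strictly inside $[\tau_0-r_0^2,\tau_0+r_0^2]$. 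Your fix---passing to the auxiliary scale $\tilde r=\sqrt\tau\,\ell^{-1/2}$, for which $|\nabla\tilde r^{-1}|\leq C/\tau$ and $|\partial_\tau\log(\tilde r/\sqrt\tau)|\leq C/\tau$ hold with no side condition, chaining the spatial and temporal integrations across the full parabolic cube, and only then reinstating the cap $r=\min(c_0\sqrt\tau,\tilde r)$ using $c_0\sqrt\tau\geq r_0/2$ there---is exactly what the proofs (as opposed to the statements) of Lemma~\ref{withinslice} and Lemma~\ref{variabletime} establish, since their differential estimates are unconditional on $\tilde r$ and the time integral $\int_\tau^{\tau_0}\eta^{-1}\,d\eta$ is bounded by $\log\frac{1+c_0^2}{1-c_0^2}$ uniformly over $[\tau_0-r_0^2,\tau_0+r_0^2]$ thanks to $r_0\leq c_0\sqrt{\tau_0}$. (Alternatively, one can iterate Lemma~\ref{variabletime} finitely many times, as the covered time lengths form a convergent geometric series summing to more than $r_0^2$.) So your proposal is a correct and more explicit rendering of what the paper's one-line combination intends, and there is no gap in it.
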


\begin{proof}
(1) is a combination of Lemma \ref{withinslice} and Lemma \ref{variabletime}. (2) follows from the definition of $r$ and the assumptions (\ref{pinching}) and (\ref{gradientupper1}). (3) follows from the localized Shi's estimates \cite{Sh}. (4) follows from (2) and the noncollapsing assumption.
\end{proof}

The following Gaussian concentration theorem proved by Hein and Naber in \cite{HN14} is the fundamental technique in the proof of the integral upper bound for the conjugate heat kernel. 

\begin{Proposition}[Hein-Naber's Gaussian Concentration]\label{gaussianconcentration}
We have
\begin{eqnarray*}
\nu_\tau(A)\nu_\tau(B)\leq \exp\left(-\frac{1}{8\tau}\dist_\tau^2(A,B)\right),
\end{eqnarray*}
for any subsets $A$ and $B\subset M$ and for all $\tau>0$,  where
\begin{eqnarray*}
\nu_\tau(A)=\int_A u\, dg(\tau)
\end{eqnarray*}
is a probability measure on $M$.
\end{Proposition}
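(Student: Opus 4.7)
The plan is to use a classical Maurey-type argument: first prove a sub-Gaussian moment generating function (MGF) estimate for $1$-Lipschitz test functions against the probability measure $\nu_{\tau_0}$ on $(M,g(-\tau_0))$, then specialize the test function to $\phi(x)=\min(\dist_{g(-\tau_0)}(x,A),d)$ with $d=\dist_{g(-\tau_0)}(A,B)$ to extract the symmetric two-set bound.

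For the MGF estimate, fix $\tau_0>0$ and a smooth bounded $\phi$ with $|\nabla\phi|_{g(-\tau_0)}\le 1$. Extend $\phi$ to $\phi_t$ for $t\in[-\tau_0,0]$ by solving the forward heat equation $\Box\phi_t=0$ with initial data $\phi$ at $t=-\tau_0$. On a Ricci flow, the Bochner identity reads
\[
    \Box|\nabla\phi_t|^2=-2|\nabla^2\phi_t|^2,
\]
the $\Ric$-terms from Bochner and from $\partial_t g^{ij}=2\Ric^{ij}$ cancelling, so the maximum principle gives $|\nabla\phi_t|^2_{g(t)}\le 1$ throughout $[-\tau_0,0]$. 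Setting $Z(t):=\int_M e^{\lambda\phi_t}u\,dg(t)$ and using $\partial_t u=-\Delta u+Ru$, $\partial_t dg(t)=-R\,dg(t)$, together with one integration by parts, a direct computation yields
\[
    \frac{d}{dt}Z(t)=-\lambda^2\int_M|\nabla\phi_t|^2 e^{\lambda\phi_t}u\,dg(t)\ge -\lambda^2 Z(t).
\]
The same style of computation shows that $t\mapsto\int_M\phi_t u\,dg(t)$ is constant, and its value at $t=0$, where $u(\cdot,t)\to\delta_p$, equals $\phi_0(p)$; hence $\phi_0(p)=\bar\phi:=\int_M\phi\,u\,dg(-\tau_0)$. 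Integrating the differential inequality for $\log Z$ from $-\tau_0$ to $0$ therefore gives the sub-Gaussian MGF bound
\[
    \log\int_M e^{\lambda(\phi-\bar\phi)}u\,dg(-\tau_0)\le\lambda^2\tau_0.
\]

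Optimizing Chebyshev's inequality over $\lambda$ converts this into the tail bound $\nu_{\tau_0}(\{|\phi-\bar\phi|\ge t\})\le 2e^{-t^2/(4\tau_0)}$. Specializing to $\phi(x)=\min(\dist_{g(-\tau_0)}(x,A),d)$, which is $1$-Lipschitz with $\phi\equiv 0$ on $A$, $\phi\equiv d$ on $B$, and $0\le\bar\phi\le d$, yields
\[
    \nu_{\tau_0}(A)\le e^{-\bar\phi^2/(4\tau_0)},\qquad \nu_{\tau_0}(B)\le e^{-(d-\bar\phi)^2/(4\tau_0)};
\]
multiplying and using $\bar\phi^2+(d-\bar\phi)^2\ge d^2/2$ (minimized at $\bar\phi=d/2$) produces the desired inequality $\nu_{\tau_0}(A)\nu_{\tau_0}(B)\le e^{-d^2/(8\tau_0)}$. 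The main obstacle is justifying the global integrations by parts and the maximum principle on the noncompact Ricci flow; this is handled using the Gaussian-type spatial decay of $u$ (available from $u\ge(4\pi\tau)^{-n/2}e^{-\ell}$ together with the quadratic lower bound on $\ell$), standard spatial cutoff arguments, and finally approximating a general bounded Lipschitz $\phi$ by compactly supported smooth functions.
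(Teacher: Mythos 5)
The paper does not prove this Proposition; it simply cites Hein--Naber \cite{HN14}, so what you have written is an independent proof. Your Maurey-type argument is essentially sound and recovers the correct constant $1/(8\tau)$: the Bochner computation $\Box|\nabla\phi_t|^2=-2|\nabla^2\phi_t|^2$ (the Ricci terms cancelling under the flow) preserving $|\nabla\phi_t|\le 1$, the differential identity $Z'(t)=-\lambda^2\int|\nabla\phi_t|^2e^{\lambda\phi_t}u\,dg_t$, the constancy of $\int\phi_t u\,dg_t$ giving $\phi_0(p)=\bar\phi$, and the specialization to the truncated distance function with $\bar\phi^2+(d-\bar\phi)^2\ge d^2/2$ are all correct. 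For the record, Hein--Naber's original route is through a log-Sobolev inequality on $\nu_\tau$ followed by the Herbst argument; your direct heat-equation MGF computation is a slightly different path to the same sub-Gaussian moment bound.

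The one genuine issue is your justification of the noncompact technicalities at the end. You invoke the subsolution bound $u\ge(4\pi\tau)^{-n/2}e^{-\ell}$ and the quadratic bound on $\ell$ to claim ``Gaussian-type spatial decay of $u$,'' but that inequality is a \emph{lower} bound on $u$ and therefore says nothing about decay; moreover, the matching quadratic lower bound on $\ell$ (the first inequality of \eqref{lquadratic}) is in this paper itself deduced from the Gaussian \emph{upper} bound on $u$ (Proposition \ref{Gaussianupper}), whose proof in turn uses the very Gaussian concentration you are proving---so that route is circular. What you actually need to kill the boundary terms in the integration by parts and to apply the maximum principle to $|\nabla\phi_t|^2$ is an a priori Gaussian \emph{upper} bound on $u$ and $|\nabla u|$; on a complete Ricci flow with bounded curvature this is available, with constants depending on $\tau_0$ and the curvature bound, from the heat-kernel estimates cited elsewhere in the paper (e.g.\ \cite[Theorem 26.25, Theorem 26.31]{RFTA-III}), together with Lemma \ref{Coarse_gradient_estimate} for $\nabla\phi_t$. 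With that substitution the proof is complete; everything else in your argument goes through unchanged.
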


We are ready to prove the following Gaussian upper bound for the conjugate heat kernel.

\begin{Proposition}\label{Gaussianupper}
Let $u$ be the conjugate heat kernel based at $(p,0)$ and $p_\tau\in M$ a time-dependent point satisfying $\ell(p_\tau,\tau)\leq\frac{n}{2}$. Then $u$ has a time-invariant Gaussian upper bound centered at $p_\tau$, namely,
\begin{eqnarray*}
u(x,\tau)\leq\frac{C}{(4\pi\tau)^{\frac{n}{2}}}\exp\left(-\frac{\dist_\tau^2(p_\tau,x)}{C\tau}\right)
\end{eqnarray*}
for all $(x,\tau)\in M\times(0,\infty)$, where $C$ is a constant independent of time.
\end{Proposition}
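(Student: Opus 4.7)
I would follow the blueprint of \cite{Z2}, using the local scale $r(x,\tau)=\sqrt{\tau}\min\{c_0,\ell(x,\tau)^{-1/2}\}$ and the bounded-geometry parabolic cubes provided by Proposition \ref{basiclocalgeometry} in place of the curvature radius $R^{-1/2}$ used there. The two ingredients are a local parabolic $L^\infty$--$L^1$ Moser estimate for $u$ on these cubes and Hein--Naber's Gaussian concentration (Proposition \ref{gaussianconcentration}).

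Fix $(x_0,\tau_0)$ and set $r_0=r(x_0,\tau_0)$. On the cube $Q:=B_{g(\tau_0)}(x_0,r_0)\times[\tau_0-r_0^2,\tau_0]$, Proposition \ref{basiclocalgeometry} gives uniformly bounded geometry at scale $r_0$; in particular $R\le Cr_0^{-2}$. Since $u$ solves the forward heat equation $\partial_\tau u=\Delta u-Ru$ in the backward time $\tau$, a standard parabolic Moser iteration on $Q$ yields
\[
u(x_0,\tau_0)\le \frac{C}{r_0^{n+2}}\int_{\tau_0-r_0^2}^{\tau_0}\nu_\tau\bigl(B_{g(\tau)}(x_0,r_0)\bigr)\,d\tau.
\]
For a ``center ball'' at each time, the subsolution inequality (\ref{subsolution}) combined with the quadratic upper bound (\ref{quadratic}) for $\ell$ based at $p_\tau$ gives $\ell(\cdot,\tau)\le C$ on $B_{g(\tau)}(p_\tau,c_1\sqrt{\tau})$ for a suitable small $c_1>0$, hence $u(\cdot,\tau)\ge c\tau^{-n/2}$ there. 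Since $\ell(p_\tau,\tau)\le n/2$ forces $r(p_\tau,\tau)=c_0\sqrt{\tau}$, Proposition \ref{basiclocalgeometry}(4) applied at this scale gives the volume lower bound $\Vol_{g(\tau)}\bigl(B_{g(\tau)}(p_\tau,c_1\sqrt{\tau})\bigr)\ge c\tau^{n/2}$, so $\nu_\tau\bigl(B_{g(\tau)}(p_\tau,c_1\sqrt{\tau})\bigr)\ge c''>0$ uniformly in $\tau$.

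Applying Hein--Naber with $A=B_{g(\tau)}(p_\tau,c_1\sqrt{\tau})$ and $B=B_{g(\tau)}(x_0,r_0)$ bounds $\nu_\tau(B)$ by a Gaussian in $\dist_\tau(p_\tau,x_0)$, and substituting back gives
\[
u(x_0,\tau_0)\le \frac{C}{r_0^n}\sup_{\tau\in[\tau_0-r_0^2,\tau_0]}\exp\Bigl(-\frac{\dist_\tau^2(p_\tau,x_0)}{C\tau}\Bigr).
\]
To match the statement I would rewrite $r_0^{-n}\le \tau_0^{-n/2}\max\{c_0^{-n},\ell(x_0,\tau_0)^{n/2}\}$ and absorb the polynomial factor $\ell^{n/2}$ into the exponential via (\ref{quadratic}) at the cost of enlarging $C$. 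The main obstacle is the final distance-distortion step: replacing $\dist_\tau(p_\tau,x_0)$ by $\dist_{\tau_0}(p_{\tau_0},x_0)$ uniformly for $\tau\in[\tau_0-r_0^2,\tau_0]$ without a global bounded-curvature-at-bounded-distance hypothesis. I would control this by integrating $\bigl|\tfrac{d}{d\tau}\dist_\tau\bigr|\le \max_\gamma|\Ric|$ along minimizing geodesics, using the $r$-scale Ricci bound $|\Ric|\le Cr^{-2}$ together with Perelman's short-segmentation trick (\S 8.3 of \cite{Per02}) so that only locally controlled curvature appears, and finally absorbing the additive errors $r_0$ and $c_1\sqrt{\tau}$ between the balls and their centers into the Gaussian by enlarging $C$.
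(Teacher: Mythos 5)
Your proposal follows essentially the same route as the paper: replace the curvature scale by $r(x,\tau)=\sqrt{\tau}\min\{c_0,\ell^{-1/2}\}$, use Proposition \ref{basiclocalgeometry} for uniform parabolic geometry at that scale, apply the Hein--Naber concentration bound with the "center ball" at $p_\tau$ as the set $A$, estimate $u$ pointwise by a parabolic $L^\infty$--$L^1$ mean value inequality, control the distance distortion across the small time interval with Perelman's localized distance-derivative estimate (Lemma~8.3), and finally absorb the $r_0^{-n}$ prefactor into the Gaussian via the quadratic bound on $\ell$. These are precisely the steps the paper takes, in the same order and with the same tools.

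One wrinkle worth pointing out: in extending the time-slice bound to the parabolic cylinder, you let the center $p_\tau$ vary with the intermediate time $\tau\in[\tau_0-r_0^2,\tau_0]$, which then forces you to compare $\dist_\tau(p_\tau,x_0)$ with $\dist_{\tau_0}(p_{\tau_0},x_0)$ — a comparison that involves both a metric change \emph{and} a change of base point. The paper instead freezes the center at $p_{\tau_0}$ throughout the cylinder and uses \eqref{gradientupper2} to conclude that $\ell(p_{\tau_0},\tau')$ remains bounded on $[\tau_0-r_0^2,\tau_0]$, so only the metric-distortion issue survives. Your version can still be made to close (the quadratic estimates on $\ell$ show $\dist_\tau(p_\tau,p_{\tau_0})\lesssim\sqrt{\tau}$, an additive error the Gaussian absorbs), but it's an unnecessary extra layer. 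Similarly, you should phrase the Moser step over a parabolic cylinder with a \emph{fixed} spatial slice $B_{g(\tau_0)}(x_0,c_1 r_0)$ and then use an inclusion $B_{g(\tau_0)}(x_0,c_1 r_0)\subset B_{g(\tau)}(x_0,r(x_0,\tau))$ (available from Proposition \ref{basiclocalgeometry}(1)(2)) to import the Hein--Naber bound at each intermediate time — your expression $\nu_\tau(B_{g(\tau)}(x_0,r_0))$ as written uses a time-varying spatial ball inside the mean value inequality. These are minor bookkeeping points; the strategy and its key ingredients are identical to the paper's.
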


\begin{proof}
Let $\tau\in (0,\infty)$ be arbitrarily fixed. Then, by (\ref{gradientupper1}) and the noncollapsing assumption, the following hold on $B_{g(\tau)}(p_\tau,\sqrt{\tau})$.
\begin{eqnarray*}
\ell&\leq& C,
\\
|\Rm|\leq CR&\leq& \frac{C}{\tau},
\\
\text{Vol}\Big(B_{g(\tau)}(p_\tau,\sqrt{\tau})\Big)&\geq& c\tau^{\frac{n}{2}}.
\end{eqnarray*}
By (\ref{subsolution}), if we take $A=B_{g(\tau)}(p_\tau,\sqrt{\tau})$, then we have
\begin{eqnarray*}
\nu_\tau(A)\geq\int_{B_{g(\tau)}(p_\tau,\sqrt{\tau})}(4\pi\tau)^{-\frac{n}{2}}e^{-\ell}dg_\tau\geq c.
\end{eqnarray*}

Fix an arbitrary $x\in M$ and let $r_0=r(x,\tau)\leq c_0\sqrt{\tau}$. Then, by applying Proposition \ref{gaussianconcentration} with $A=B_{g(\tau)}(p_\tau,\sqrt{\tau})$ and $\displaystyle B=B_{g(\tau)}\left(x,r_0\right)$, we have
\begin{eqnarray}
\int_{B_{g(\tau)}\left(x,r_0\right)}u\,dg(\tau)&=&\nu_\tau(B)\leq \nu_\tau(A)^{-1}\exp\left(-\frac{1}{8\tau}\dist_\tau^2(A,B)\right)
\\\nonumber
&\leq&C\exp\left(-\frac{1}{8\tau}\big(\dist_\tau(p_\tau,x)-\sqrt{\tau}-r_0\big)^2\right).
\\\nonumber
&\leq&C\exp\left(-\frac{1}{16\tau}\dist^2_{\tau}(p_\tau,x)\right).
\end{eqnarray}

Next, we extend the above integral bound to space-time. (\ref{gradientupper2}) implies that
\begin{eqnarray}\label{l_tau}
\ell(p_\tau,\tau')\leq C \quad\text{ for all }\quad \tau'\in[\tau-r_0^2,\tau]\subset\big[(1-c_0^2)\tau,\tau\big].
\end{eqnarray}
Then, implementing the same argument as before at $\tau'\in[\tau-r_0^2,\tau]\subset\big[(1-c_0^2)\tau,\tau\big]$, with $A=B_{g(\tau')}(p_\tau,\sqrt{\tau'})$ and $B=B_{g(\tau')}(x,r')$, where $r'=r(x,\tau')$, we obtain
\begin{eqnarray}\label{0}
\int_{B_{g(\tau')}(x,r')}u \,dg(\tau')\leq C\exp\left(-\frac{1}{16\tau}\dist^2_{\tau'}(p_\tau,x)\right),
\end{eqnarray}
for all $\tau\in[\tau-r_0^2,\tau]$. Because of Proposition \ref{basiclocalgeometry} (1)(2), we may find a small constant $c_1\in(0,\frac{1}{4})$, such that
\begin{eqnarray*}
B_{g(\tau)}(x,c_1r_0)\subset B_{g(\tau')}\left(x,r'\right) \text{ for all } \tau\in[\tau-r_0^2,\tau].
\end{eqnarray*}
Hence, (\ref{0}) can be rewritten as
\begin{eqnarray}\label{1}
\int_{B_{g(\tau)}(x,c_1r_0)}u\,dg(\tau')\leq C\exp\left(-\frac{1}{16\tau}\dist^2_{\tau'}(p_\tau,x)\right).
\end{eqnarray}
Note that the distance on the right-hand-side is in terms of $g(\tau')$. This can be dealt with by Perelman's distance distortion estimate. Indeed, the definition of $r_0$ and (\ref{l_tau}) imply that $r(p_\tau,\tau')\geq c\sqrt{\tau}\geq cr_0$ for all $\tau'\in[\tau-r_0^2,\tau]$. In combination with the fact that $r(x,\tau')\geq \frac{1}{2} r_0$ for all $\tau'\in[\tau-r_0^2,\tau]$ as well as Proposition \ref{basiclocalgeometry}(2), we have
\begin{eqnarray*}
{\Ric}_{g(\tau')}\leq Cr_0^{-2} \quad\text{ on }\quad B_{g(\tau')}(x,c_1r_0)\cup B_{g(\tau')}(p_\tau,cr_0)
\end{eqnarray*}
for all $\tau'\in [\tau-r_0^2,\tau]$. Hence, by using Lemma 8.3 in \cite{Per02}, we obtain
\begin{eqnarray*}
\frac{d}{ds}\dist_{s}(p_\tau,x)\leq Cr_0^{-2} \quad\text{ for all } s\in\left[\tau-r_0^2,\tau\right].
\end{eqnarray*}
Integrating the above inequality from $\tau'\in[\tau-r_0^2,\tau]$ to $\tau$, we have
\begin{eqnarray}\label{2}
\dist_{\tau'}(p_\tau,x)\geq  \dist_{\tau}(p_\tau,x)-C.
\end{eqnarray}
Combining (\ref{1}) and (\ref{2}), we obtain
\begin{eqnarray}
\int_{\tau-r_0^2}^\tau\int_{B_{g(\tau)}\left(x,c_1r_0\right)}u\,dg_{\tau'}d\tau'\leq Cr_0^2\exp\left(-\frac{1}{16\tau}\dist^2_{\tau}(p_\tau,x)\right).
\end{eqnarray}

Since Proposition \ref{basiclocalgeometry} provides good geometry bounds on the parabolic ball $B_{g(\tau)}\left(x,c_1r_0\right)\times[\tau-r_0^2,\tau]$, we then obtain the pointwise bound by using the standard parabolic mean value inequality (c.f. Lemma 3.1 in \cite{CTY}).
\begin{eqnarray}\label{part1}
u(x,\tau)&\leq&\frac{C}{r_0^{n+2}}\int_{\tau-r_0^2}^\tau\int_{B_{g(\tau)}\left(x,c_1r_0\right)}u\,dg_{\tau'}d\tau'
\\\nonumber
&\leq&Cr_0^{-n}\exp\left(-\frac{1}{16\tau}\dist^2_{\tau}(p_\tau,x)\right).
\\\nonumber
&\leq&C(4\pi\tau)^{-\frac{n}{2}}\exp\left(-\frac{1}{16\tau}\dist^2_{\tau}(p_\tau,x)-C\log\frac{r_0}{\sqrt{\tau}}\right).
\end{eqnarray}
Finally, to deal with the last term in the above formula, recall the definition of $r_0:=r(x,\tau)=\sqrt{\tau}\min\{c_0, \ell(x,\tau)^{-\frac{1}{2}}\}$, we then compute
\begin{eqnarray}\label{part2}
-C\log\big(\frac{r_0}{\sqrt{\tau}}\big)&\leq& C\log\Big(\max\{c_0^{-1},\ell(x,\tau)^{\frac{1}{2}}\}\Big)
\\\nonumber
&\leq&C\log\left(c_0^{-1}+C+C\frac{\dist^2_\tau(p_\tau,x)}{\sqrt{\tau}}\right)
\\\nonumber
&\leq&\frac{1}{32}\frac{\dist^2_\tau(p_\tau,x)}{\tau}+C,
\end{eqnarray}
where we have used the trivial observation that the logarithmic function grows slower than the quadratic function. Precisely, for any $\varepsilon>0$, there exists $C(\varepsilon)>0$, such that
\begin{eqnarray*}
\log x\leq \varepsilon x^2+C(\varepsilon)\text{ for all } x>0;
\end{eqnarray*}
note that the quadratic upper bound of $\ell$ is merely a consequence of (\ref{gradientupper1}). Combining (\ref{part1}) and (\ref{part2}), the conclusion of the proposition follows.

\end{proof}

Proposition \ref{Gaussianupper} has the following immediate implication. 

\begin{Corollary}
For every $\tau\in(0,\infty)$, let $p_\tau\in M$ be a point such that $\ell(p_\tau,\tau)\leq \frac{n}{2}$. Then $u$ has the following Gaussian upper and lower bounds
\begin{eqnarray}\label{uquadratic}
\frac{1}{C(4\pi\tau)^{\frac{n}{2}}}\exp\left(-\frac{C}{\tau}\dist^2_\tau(p_\tau,x)\right)\leq u(x,\tau)\leq\frac{ C}{(4\pi\tau)^{\frac{n}{2}}}\exp\left(-\frac{1}{C\tau}\dist^2_\tau(p_\tau,x)\right),
\end{eqnarray}
for all $(x,\tau)\in M\times(0,\infty)$, where $C$ is a constant independent of $\tau$. Furthermore, the reduced distance $\ell$ satisfies the following quadratic estimates
\begin{eqnarray}\label{lquadratic}
\frac{1}{C\tau}\dist_\tau^2(p_\tau,x)-C\leq \ell(x,\tau)\leq \frac{C}{\tau}\dist_\tau^2(p_\tau,x)+C,
\end{eqnarray}
for all $(x,\tau)\in M\times(0,\infty)$, where $C$ is a constant independent of $\tau$.
\end{Corollary}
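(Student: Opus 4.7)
The statement contains four assertions: the Gaussian upper and lower bounds for $u$ and the quadratic upper and lower bounds for $\ell$. The upper bound for $u$ is already Proposition \ref{Gaussianupper} applied for every fixed $\tau$, so nothing is needed there. The remaining three assertions can be derived by combining the gradient estimate (\ref{gradientupper1}) on $\ell$, the subsolution inequality (\ref{subsolution}), and the Gaussian upper bound on $u$ from Proposition \ref{Gaussianupper}.

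The plan is to first establish the quadratic upper bound on $\ell$. From (\ref{gradientupper1}) and the fact that $\ell>0$ on ancient flows, one has
\[
|\nabla \sqrt{\ell}|_{g(\tau)} \;=\; \frac{|\nabla \ell|_{g(\tau)}}{2\sqrt{\ell}} \;\leq\; \frac{1}{2}\sqrt{\frac{C}{\tau}},
\]
so $\sqrt{\ell(\cdot,\tau)}$ is locally Lipschitz on $(M,g(\tau))$ with a global Lipschitz constant of order $\tau^{-1/2}$. Integrating this bound along a minimizing $g(\tau)$-geodesic joining $p_\tau$ to $x$ and invoking $\ell(p_\tau,\tau)\leq n/2$ from (\ref{lmin}), then squaring, yields $\ell(x,\tau)\leq \frac{C}{\tau}\dist_\tau^2(p_\tau,x)+C$, i.e., the right half of (\ref{lquadratic}).

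With this upper bound on $\ell$ in hand, the Gaussian lower bound for $u$ is immediate: plugging it into (\ref{subsolution}) gives
\[
u(x,\tau)\;\geq\; (4\pi\tau)^{-n/2} e^{-\ell(x,\tau)} \;\geq\; \frac{e^{-C}}{(4\pi\tau)^{n/2}}\exp\!\left(-\frac{C}{\tau}\dist_\tau^2(p_\tau,x)\right),
\]
which is the left half of (\ref{uquadratic}). Conversely, the quadratic lower bound for $\ell$ is obtained by reversing the direction: inserting the Gaussian upper bound from Proposition \ref{Gaussianupper} into the subsolution inequality (\ref{subsolution}) gives $e^{-\ell(x,\tau)}\leq C\exp(-\dist_\tau^2(p_\tau,x)/(C\tau))$, and taking logarithms and rearranging yields $\ell(x,\tau)\geq \frac{1}{C\tau}\dist_\tau^2(p_\tau,x)-C$, the left half of (\ref{lquadratic}).

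In short, the whole corollary is a bookkeeping consequence of Proposition \ref{Gaussianupper} and the subsolution inequality (\ref{subsolution}), glued together by a single gradient integration. I anticipate no real obstacle; the only mild subtlety is justifying the integration of the gradient bound on $\sqrt{\ell}$ along a geodesic through points where $\ell$ may fail to be smooth, but this is handled by the standard local Lipschitz regularity of Perelman's reduced distance together with a density argument (or by using a smooth mollification of a minimizing $\mathcal{L}$-geodesic).
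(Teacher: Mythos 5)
Your proof is correct and follows the same route as the paper: the quadratic upper bound on $\ell$ comes from integrating the gradient bound on $\sqrt\ell$ implied by (\ref{gradientupper1}), which combined with the subsolution inequality (\ref{subsolution}) gives the lower Gaussian bound on $u$, while the quadratic lower bound on $\ell$ comes from Proposition \ref{Gaussianupper} together with (\ref{subsolution}). The only difference is that you spell out the gradient integration and the Lipschitz regularity of $\ell$ explicitly, whereas the paper states these as immediate; the logic is identical.
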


\begin{proof}
The second inequality of (\ref{lquadratic}) follows from the assumption (\ref{gradientupper1}); this fact, in combination with (\ref{subsolution}), implies the first inequality of (\ref{uquadratic}). The first inequality of (\ref{lquadratic}) follows from Proposition \ref{Gaussianupper} and (\ref{subsolution}).
\end{proof}

\begin{proof}[Proof of Proposition \ref{entropyconvergence}]
We only prove the equality (\ref{theequality}). As to the existence of the asymptotic shrinker, one may refer to either the comments at the beginning of this section, or the arguments of Proposition 4.2 in \cite{Z2}. First of all, let us fix the sequences of points $\{x_i\}_{i=1}^\infty\subset M$ and positive numbers $\tau_i\nearrow\infty$ as in the statement of the proposition. Then, we define 
\begin{eqnarray*}
f_i(\cdot,\tau)=f(\cdot,\tau\tau_i),\ \ u_i=(4\pi\tau)^{-\frac{n}{2}}e^{-f_i},
\end{eqnarray*}
where, as before,
\begin{eqnarray*}
u:=(4\pi\tau)^{-\frac{n}{2}}e^{-f}
\end{eqnarray*}
is the conjugate heat kernel based at $(p,0)$. We will prove that $\{f_i\}_{i=1}^\infty$ also smoothly converges to the potential function on the asymptotic shrinker.

First, we verify that the base points $x_i$ are equivalent to $p_{\tau_i}$. By (\ref{gradientupper2}), we have
\begin{eqnarray}\label{lup}
\ell(x_i,\tau)\leq \ell(x_i,\tau_i)\left(\frac{\tau}{\tau_i}\right)^C\leq\frac{n}{2}\cdot2^C\leq C \text{ for all } \tau\in \left[\frac{1}{2}\tau_i,2\tau_i\right].
\end{eqnarray}
It then follows from (\ref{lquadratic}) that
\begin{eqnarray*}
\dist_{\tau}(x_i,p_\tau)\leq C\sqrt{\tau_i} \quad\text{ for all }\quad \tau\in \left[\frac{1}{2}\tau_i,2\tau_i\right].
\end{eqnarray*}
Hence, after the parabolic scaling, the Gaussian upper and lower bounds (\ref{uquadratic}) become
\begin{eqnarray}\label{fquadratic}
\frac{1}{C}\cdot\dist^2_{g_i(\tau)}(x_i,x)-C\leq f_i(x,\tau)\leq C\cdot\dist^2_{g_i(\tau)}(x_i,x)+C,
\end{eqnarray}
for all $\tau\in[\frac{1}{2},2]$.

Arguing as Proposition 3.3 in \cite{Z2}, one may immediately obtain uniform growth estimates for the derivatives of $f_i$; indeed, all gradients of $f_i$ have uniformly polynomial growth bounds. Hence we have
\begin{eqnarray*}
f_i&\rightarrow&f_\infty
\\
u_i&\rightarrow& u_\infty=(4\pi\tau)^{-\frac{n}{2}}e^{-f_\infty}
\end{eqnarray*}
locally smoothly on $M\times[1,2]$. For the convergence of the entropy and of the integral of $u_i$, we need to show that the integral of $u_i$ outside a large ball is negligible. Note that unlike the case of \cite{Z2}, we do not have a lower curvature bound to apply volume comparison theorem. Nevertheless, the Gaussian concentration theorem is sufficient. Fixing a large number $\rho\gg 1$, we apply Proposition \ref{gaussianconcentration} to $A=B_{g(\tau)}(x_i,\sqrt{\tau})$ and $B= M\setminus B_{g(\tau)}(x_i,\rho\sqrt{\tau_i})$ for all $\tau\in[\tau_i,2\tau_i]$. Since, by (\ref{lup}), $\ell(x_i,\tau)$ is bounded from above by a constant, we may argue as the beginning of the proof of Proposition \ref{Gaussianupper} to obtain that $\nu_\tau(A)\geq c> 0$. Hence
\begin{eqnarray*}
\nu_\tau(B)\leq\exp\left(-\frac{1}{16\tau}\rho^2\right),
\end{eqnarray*}
namely,
\begin{eqnarray}\label{negligible}
\int_{M\setminus B_{g_i(\tau)}(x_i,\rho)}u_i\,dg_i(\tau)\leq C\exp(-c\rho^2)\quad \text{ for all }\quad \tau\in[1,2].
\end{eqnarray}
With this estimate, one may continue arguing as the proof of \cite[Proposition 4.2]{Z2} to obtain
\begin{eqnarray}
\lim_{\eta\rightarrow\infty}\W_{(p,0)}(\eta)&=&\int_{M_\infty}\Big(\tau\big(|\nabla f_\infty|^2+R_{g_\infty(\tau)}\big)+f_\infty-n\Big)u_\infty \,dg_\infty(\tau).\label{entropylimit}
\\
\lim_{i\rightarrow\infty}\int_Mu_i\,dg_i(\tau)&=&\int_{M_\infty}(4\pi\tau)^{-\frac{n}{2}}e^{-f_\infty} dg_\infty(\tau)=1.\label{masslimit}
\end{eqnarray}
Since the right-hand-side of (\ref{entropylimit}) is a constant, we immediately obtain, by Perelman's monotonicity formula, that $f_\infty$ is the potential function of the asymptotic shrinker
\begin{eqnarray*}
\Ric_\infty+\nabla^2 f_\infty=\frac{1}{2\tau}g_\infty.
\end{eqnarray*}
The quantity on the right-hand-side of (\ref{entropylimit}) is usually called the \emph{entropy of the shrinker}.

To show the second equality of (\ref{theequality}), we need only to verify that the integrand of the reduced volume is negligible outside a large ball. This can be verified by using (\ref{subsolution}) and (\ref{negligible}). Hence we have
\begin{eqnarray}\label{volume}
\int_{M_\infty}(4\pi\tau)^{-\frac{n}{2}}e^{
-\ell_\infty}dg_\infty(\tau)=\lim_{i\rightarrow\infty}\int_M(4\pi\tau)^{-\frac{n}{2}}e^{-\ell_i}dg_i(\tau)=\lim_{\eta\rightarrow\infty} V_{(p,0)}(\eta),
\end{eqnarray}
for all $\tau\in[1,2]$.

The first equality in (\ref{theequality}) is proved by comparing the different normalizations of $\ell_\infty$ and $f_\infty$; note that they are different potential functions of the same shrinker. This was first pointed out by Carrillo and Ni \cite{CN09}. Since we already have (\ref{entropylimit}), (\ref{masslimit}), and (\ref{volume}), one may then argue as the proof of Theorem 1.2 in \cite{Z2}; this finishes the proof of the proposition.

\end{proof}

\section{Bamler's gradient estimate on noncompact manifolds}

In this section, we vindicate Bamler's sharp gradient estimates \cite{Bam20a} on 
noncompact manifolds. Assuming bounded curvature, these results should be readily established by using his method. We include this proof for the convenience of the readers.  

For a function $u$ defined on $M\times [a,b]$, we write
$u_t = u(\cdot, t).$
Following \cite{Bam20a}, we consider the
function 
    $\displaystyle\Phi(x)
    := \frac{1}{\sqrt{4\pi}}
    \int_{-\infty}^{x} \exp(-y^2/4)\, dy.$
Then $\Phi_t(x):= \Phi\left(t^{-1/2}x\right)$
is a solution to the 1-dimensional 
heat equation $\partial_t \Phi_t
=\Phi_t''$
with initial condition 
$\chi_{[0,\infty)}.$
Using Bamler's method, we have
the following.

\begin{Theorem}[Theorem 4.1 in \cite{Bam20a}]
\label{thm:Bamler gradient estimates}
Let $(M,g(t))_{t\in [t_0,t_1]}$ be a complete Ricci flow with
$|{\Rm}|\le \Lambda$ on $M\times [t_0,t_1]$ for some constant $\Lambda<\infty.$
Consider a solution $u$ to the heat equation coupled with $g(t)$ and assume that $u$ takes values in $(0,1).$ 
Let $T\ge 0$ and suppose
$|\nabla \Phi_T^{-1}(u_{t_0})|_{g(t_0)}\le 1$
if $T>0.$

Then 
$|\nabla \Phi_{T+t-t_0}^{-1}(u_t)|_{g(t)} \le 1$ for any $t\in (t_0,t_1].$
\end{Theorem}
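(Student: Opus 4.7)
The plan is to follow Bamler's substitution and reduce everything to a single parabolic inequality for $h := |\nabla v|^2$, where $v_t := \Phi_{s(t)}^{-1}(u_t)$ and $s(t) := T+t-t_0$. First I would derive the PDE satisfied by $v$. Since $\partial_s \Phi_s = \Phi_s''$ and $\Phi''(y) = -\tfrac{y}{2}\Phi'(y)$, so that $\Phi_s''(x)/\Phi_s'(x) = -x/(2s)$, applying the chain rule to the identity $u_t = \Phi_{s(t)}(v_t)$ and dividing the heat equation by $\Phi_s'(v)$ yields
\[
    \partial_t v = \Delta v - \frac{v}{2s}\bigl(|\nabla v|^2 - 1\bigr).
\]

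Next I would compute $(\partial_t - \Delta) h$ under the coupled flow. The contribution $\partial_t g^{ij} = 2R^{ij}$ produces $2\Ric(\nabla v,\nabla v)$ in $\partial_t h$, while Bochner's formula contributes $2|\nabla^2 v|^2 + 2\Ric(\nabla v,\nabla v)$ in $\Delta h$; the Ricci pieces cancel. Combined with the PDE for $v$, this gives the key differential inequality
\[
    (\partial_t - \Delta) h = -2|\nabla^2 v|^2 - \frac{h(h-1)}{s} - \frac{v}{s}\langle \nabla v, \nabla h\rangle.
\]
Wherever $h > 1$ the right-hand side carries a strict negative zeroth-order term $-h(h-1)/s$; at any interior spacetime maximum, where $\nabla h = 0$ and $\Delta h \le \partial_t h$, this forces $(\partial_t - \Delta)h < 0$, contradicting maximality. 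If $M$ were compact, together with the initial bound $h|_{t=t_0}\le 1$ (for $T>0$), this would close the argument.

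The remaining and essential step is to justify the maximum principle despite noncompactness, which is the chief obstacle: although $u$ takes values in the bounded interval $(0,1)$, the pointwise value $v = \Phi_s^{-1}(u)$, and hence the drift coefficient $v/s$ in the cross term, can be unbounded at spatial infinity (for instance when $u$ approaches $0$ or $1$ there). My plan is to introduce a positive spatial barrier $\varphi$ with controlled derivatives---a Perelman--Hamilton smoothing of $1+\dist_{g(t_0)}(\cdot,x_0)^2$ whose gradient and Laplacian are bounded in terms of $\Lambda$---and examine $F_\epsilon := h - 1 - \epsilon e^{Kt}\varphi$. The bounded curvature hypothesis together with Shi-type estimates applied to the $(0,1)$-valued $u$ yields locally uniform bounds on $h$ on compact time subintervals, so $F_\epsilon \to -\infty$ at spatial infinity and any positive supremum is attained at a finite spacetime point. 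Choosing $K = K(\Lambda, t_1-t_0)$ large enough that the $(\partial_t - \Delta)$-cost of the barrier dominates the transverse contribution from $-\tfrac{v}{s}\langle \nabla v, \nabla h\rangle$ at such a maximum reproduces the compact contradiction; sending $\epsilon \to 0$ yields $h \le 1$. The case $T > 0$ uses the assumed initial bound to exclude $t = t_0$ from the competition; the case $T = 0$ can be handled by applying the $T'>0$ case with a shifted initial time $t_0' \in (t_0,t_1)$ and letting $t_0' \searrow t_0$, parabolic regularity (again from bounded curvature) ensuring convergence of the relevant quantities on compact sets. The hard part will be the quantitative coupling between the growth of $\varphi$, the a priori bound on $|v|$ produced by Shi-type estimates for $|\nabla u|$, and the choice of $K$, so that the cross term $|v|\cdot|\nabla v|\cdot|\nabla \varphi|/s$ is indeed absorbed uniformly.
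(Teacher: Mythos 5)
Your reduction to the evolution inequality for $h=|\nabla v|^2$ with the drift term $-\frac{v}{s}\langle\nabla v,\nabla h\rangle$ is correct and matches the paper's computation. The difficulty you flag — that $v$ and hence the drift can blow up at spatial infinity — is also exactly the right one. But your proposed cure has a real gap.

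You claim that ``Shi-type estimates applied to the $(0,1)$-valued $u$ yield locally uniform bounds on $h$ on compact time subintervals,'' and you use this to argue that $F_\epsilon = h-1-\epsilon e^{Kt}\varphi \to -\infty$ at infinity. This is not true. Shi/Bochner estimates bound $|\nabla u|$, but $|\nabla v| = |\nabla u|/\Phi_s'(v)$ and $\Phi_s'(v) = (4\pi s)^{-1/2}e^{-v^2/4s}$ vanishes exponentially fast as $v\to\pm\infty$. Since $u$ is only assumed to take values in the \emph{open} interval $(0,1)$, there is no a priori control on how fast $u$ approaches $0$ or $1$ at infinity, so $|v|$ and therefore $h$ can grow arbitrarily fast — faster than any polynomial barrier $\varphi$. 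Your barrier therefore cannot ensure the supremum of $F_\epsilon$ is attained, and the ``hard part'' you defer is actually impossible in the generality you set up.

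The missing ingredient — and the crux of Bamler's argument, which the paper follows — is a regularization of the \emph{values} of $u$, not merely a penalization in space: set $u^{(\epsilon)} := \epsilon + (1-2\epsilon)u \in (\epsilon,1-\epsilon)$ and $h^{(\epsilon)}_t := \Phi_t^{-1}(u^{(\epsilon)}_t)$. This gives the crucial a priori bound $|h^{(\epsilon)}_t| \le A_\epsilon\sqrt{t}$ with $A_\epsilon = \Phi^{-1}(1-\epsilon)$, so $\Phi_t'(h^{(\epsilon)})$ is bounded away from zero, $|\nabla h^{(\epsilon)}|$ is bounded via the coarse gradient estimate (Lemma \ref{Coarse_gradient_estimate}), and the drift $\nabla\bigl(\tfrac{(h^{(\epsilon)})^2}{2t}\bigr)$ is bounded on $M\times[T,1]$. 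At that point one can apply the cited noncompact maximum principle \cite[Theorem 12.14]{RFTA-II} directly to $|\nabla h^{(\epsilon)}|^2-1$ with no barrier needed, and finish by letting $\epsilon\to 0$. Your $T=0$ limiting argument is fine in spirit; the gap is entirely in the missing value-regularization that makes the maximum-principle step legitimate.
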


Following Bamler, we wish to apply a version of maximum principle for noncompact Ricci flows \cite[Theorem 12.14]{RFTA-II} to $|\nabla \Phi_t^{-1}(u_t)|^2$. To do so an a priori bound is necessary.
The following lemma, whose detailed proof is left to the readers, is a consequence of the Bernstein-Bando-Shi technique (c.f. \cite{Ko07}) as well as the well know Bochner formula for a solution $u$ to the heat equation coupled with the Ricci flow
\begin{eqnarray*}
\Box |\nabla u|^2=-2|\nabla^2 u|^2\leq0,
\end{eqnarray*}
where $\Box = \partial_t - \Delta_{g(t)}$ is the heat operator coupled with the Ricci flow.

\begin{Lemma}\label{Coarse_gradient_estimate}
Let $(M^n,g(t))_{t\in [0,1]}$ be a complete Ricci flow with bounded curvature.
Suppose that $u$ is a solution to the heat 
equation coupled with $g(t)$ 
satisfying $|u|\le 1$ on $M\times [0,1]$.
If $|\nabla u_0|^2 \le A,$ where $A$ is a positive number,
then
$|\nabla u_t|^2 \le A$
for all $t\in [0,1].$
If there is no initial gradient bound,
then we have $t|\nabla u_t|^2 \le 25$
on $M\times [0,1]$.
\end{Lemma}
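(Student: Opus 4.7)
The proof is a standard Bernstein–Bando–Shi argument, based on the Bochner identity
\[
    \Box |\nabla u|^2 \;=\; -2|\nabla^2 u|^2,
\]
which is the computation explicitly quoted just before the lemma statement. I would begin by deriving this identity together with the companion
\[
    \Box(u^2) \;=\; 2u\,\Box u - 2|\nabla u|^2 \;=\; -2|\nabla u|^2,
\]
both of which follow on a Ricci flow from the usual Bochner formula plus the cancellation of the Ricci terms coming from $\partial_t g^{ij} = 2R^{ij}$ against $2\Ric(\nabla u, \nabla u)$ in $\Delta|\nabla u|^2$, using $\Box u = 0$.

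For the \emph{first} assertion, I would apply the parabolic maximum principle directly to $|\nabla u|^2$. Since $\Box |\nabla u|^2 \le 0$, the supremum of $|\nabla u|^2$ over $M$ is nonincreasing in $t$, so $|\nabla u_t|^2 \le A$ for all $t\in[0,1]$. To invoke the maximum principle on a noncompact manifold one needs an a priori bound on $|\nabla u|^2$ on $M\times[0,1]$; under the bounded curvature assumption this is supplied by Shi-type short-time derivative estimates for $u$ (together with $|u|\le 1$ and the initial gradient bound), whereupon one may invoke the maximum principle for $\Box$-subsolutions on complete Ricci flows with bounded curvature (e.g.\ \cite[Theorem 12.14]{RFTA-II}).

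For the \emph{second} assertion, I would introduce the auxiliary function
\[
    \phi(x,t) \;:=\; t\,|\nabla u|^2 + C\,u^2,
\]
and compute
\[
    \Box\phi \;=\; |\nabla u|^2 + t\,\Box|\nabla u|^2 + C\,\Box(u^2) \;=\; (1-2C)|\nabla u|^2 - 2t|\nabla^2 u|^2.
\]
Choosing $C \ge \tfrac{1}{2}$ makes $\Box\phi \le 0$, while $|u|\le 1$ gives $\phi|_{t=0} = C u_0^2 \le C$. The same maximum-principle step as in the first part then yields $\phi \le C$ throughout $M\times[0,1]$, and hence $t|\nabla u_t|^2 \le C$; taking $C = \tfrac{1}{2}$ already gives the stated bound with room to spare for the constant $25$.

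\textbf{Main obstacle.} The only non-routine point is the justification of the maximum principle on a complete noncompact manifold. Bounded curvature is essential: it furnishes both the distance-distortion control needed for a standard spatial cutoff argument and the Shi-type a priori bound on $|\nabla u|$ that permits invoking the complete-Ricci-flow maximum principle directly. The cushion between the sharp constant $\tfrac12$ above and the stated $25$ is there precisely to absorb the error terms produced by a cutoff if one prefers that route over quoting a maximum-principle theorem.
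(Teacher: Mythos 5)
Your proof is correct and follows exactly the route the paper indicates (Bernstein--Bando--Shi via the Bochner identity $\Box|\nabla u|^2=-2|\nabla^2 u|^2$, the companion $\Box(u^2)=-2|\nabla u|^2$, the auxiliary function $t|\nabla u|^2+Cu^2$, and the noncompact maximum principle); the paper explicitly leaves the details to the reader, and your computation supplies them, even yielding the sharper constant $\tfrac12$ in place of $25$. The one step worth making explicit is the a priori boundedness required to invoke \cite[Theorem 12.14]{RFTA-II}: for the second assertion, where no initial gradient bound is available, one should run the maximum principle on $M\times[\epsilon,1]$ with $\phi_\epsilon:=(t-\epsilon)|\nabla u|^2+Cu^2$, which is bounded there by local parabolic estimates for bounded solutions on a bounded-curvature background, satisfies $\Box\phi_\epsilon\le 0$ and $\phi_\epsilon(\cdot,\epsilon)\le C$, and then send $\epsilon\to 0$---this sidesteps the circularity one would otherwise face in trying to bound $|\nabla u_t|$ uniformly as $t\downarrow 0$.
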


\def \uep {u^{(\epsilon)}}
\def \hep {h^{(\epsilon)}}
\begin{proof}[Proof of Theorem \ref{thm:Bamler gradient estimates}]
We first consider the case where $T>0.$
By parabolic rescaling and time shifting, we may assume $0<t_0=T<1$ and $t_1\ge 1$. It suffices to show 
$|\nabla \Phi_{1}^{-1}(u_1)|_{g(1)} \le 1$ under the assumption $|\nabla \Phi_{T}^{-1}(u_{T})|_{g(T)} \le 1$. In the following, we shall omit the subindeces and the reader should keep in mind that the norms of the gradients are computed using the evolving metric.

We write $h_t=\Phi_{t}^{-1}(u_t).$
For any small $\epsilon>0,$ let
\[
    \uep_t :=
    \epsilon + (1- 2 \epsilon) u_t
    \in (\epsilon, 1-\epsilon),
    \quad
    \hep_t := \Phi_t^{-1}
    \left(\uep_t\right).
\]
Define $A_\epsilon = \Phi^{-1}(1- \epsilon)$, then
$\left|\hep_t\right| \le A_{\epsilon}\sqrt{t}$ for all $t\in[T,1]$.
Note that
\[
    \uep - \frac{1}{2}
    = (1-2\epsilon)\left(u - \frac{1}{2}\right),
\]
which means that $\uep$ is closer to $1/2$ than $u$.
It follows that 
$\left|\hep_t\right| = \left|\Phi_t^{-1}(\uep_t)\right| \le |\Phi_t^{-1}(u_t)|
=|h_t|$ and
\[
    \left|\nabla \hep_{T}\right|
    =  (1-2\epsilon) 
    \frac{|\nabla u_T|}{\Phi_T'(\hep_{T})}
    \le 
    \frac{|\nabla u_T|}{\Phi_T'(h_{T})}
    = |\nabla h_T|\le 1,
\]
where the first inequality above is due to the definition of $\Phi$, and the last inequality is simply the assumption of the theorem.
Hence
\[
    \left|\nabla \uep_T\right|\le \Phi_T'(\hep_T)
    \le (4\pi T)^{-1/2}.
\]
By Lemma \ref{Coarse_gradient_estimate}, we have 
\[
    \left|\nabla \uep_t\right|\le C
     \quad \text{ for all } \quad t\in [T,1],
\]
where
$C=(4\pi T)^{-1/2}$.
It follows that
\[
    \left|\nabla \hep_t\right|
    \le (4\pi t)^{1/2} C \exp(A_{\epsilon}^2/4),
\]
where we have used the fact that $\left|\hep_t\right|\leq A_{\epsilon}\sqrt{t}$. Hence, $\left|\nabla \frac{\left(\hep_t\right)^2}{2t}\right|$ is bounded on 
$M\times [T,1].$
By the computations in Theorem 4.1 of \cite{Bam20a}, we have
\[
    (\partial_t - \Delta_t)
    \left|\nabla \hep\right|^2
    + \nabla \frac{\left(\hep\right)^2}{2t}
    \cdot \nabla \left|\nabla \hep\right|^2
    = \frac{1- \left|\nabla \hep\right|^2}{t}\left|\nabla \hep\right|^2.
\]
We can apply the maximum principle 
\cite[Theorem 12.14]{RFTA-II} to $|\nabla \hep|^2-1$, and thereby conclude that 
$|\nabla \hep_1|\le 1.$
Taking $\epsilon\to 0$, we have that
$|\nabla h_1|\le 1$ and we have finished the proof in this case.

We now consider the case where $T=0.$ 
This can be proved by a limiting argument as
observed in \cite[Lemma 3.8]{Bam20b}. By parabolic rescaling and time shifting, we may assume
that $T=t_0=0$ and $t_1\ge 1$. It suffices to prove 
$|\nabla h_1|=|\nabla \Phi_1^{-1}(u_1)|\le 1,$
where we use the same notations as above.

By Lemma \ref{Coarse_gradient_estimate}, we have $|\nabla u_s|^2 \le \frac{25}{s}$ on $M\times (0,1]$. 
Let $\epsilon\in (0,1/2)$ be arbitrarily fixed. Then, for any $s\in(0,1/2)$ we will define $T=T(\epsilon,s)\in(0,s)$, so that the result in the first case can be applied. To this end, let us compute as follows.
\[
    \left|\nabla \Phi_T^{-1}(\uep_{s})\right|
    = (4\pi T)^{1/2}
    \exp\left( \left(\Phi_T^{-1}(\uep_s)\right)^2/4T\right) (1-2\epsilon)|\nabla u_s|
    \le C(T/s)^{1/2} \exp(A_{\epsilon}^2/4).
\]
We take $T=T(\epsilon,s)$ small enough so that
$\left|\nabla \Phi_T^{-1}(\uep_s)\right|\le 1.$
By the result in the first case, we have
\[
    \left|\nabla \Phi_{T+1-s}^{-1}(\uep_1)\right| \le 1.
\]
Letting $s\to 0$ and then $\epsilon\to 0,$
we have $|\nabla \Phi_1^{-1}(u_1)| \le 1.$


\end{proof}

We state the following standard result.
\begin{Lemma} \label{lem: distr integrals}
Suppose that $\mu$ is a probabily measure
on some measure space $(X,\mathcal{B}).$
Let $q:X\to \mathbb{R}$ be a measurable function
and let $\phi:\mathbb{R}\to \mathbb{R}$
be absolutely continuous on each compact interval.
For any real number $\alpha,$
if $\phi|_{[\alpha,\infty)}$ is monotone, then
\[  
    \int_{\{q>\alpha\}}
    \phi(q)\, d\mu 
    = \phi(\alpha) \mu(q>\alpha)
    + \int_{\alpha}^\infty
    \phi'(t) \mu(q>t)\, dt.
\]
\end{Lemma}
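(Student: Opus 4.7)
The plan is to reduce the identity to Fubini's theorem via the fundamental theorem of calculus for absolutely continuous functions. The monotonicity hypothesis enters exactly at the point where we need to justify the interchange of integrals.

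First, since $\phi$ is absolutely continuous on compact intervals, for any $x\in X$ with $q(x)>\alpha$ I would write
\begin{equation*}
    \phi(q(x)) = \phi(\alpha) + \int_\alpha^{q(x)} \phi'(t)\, dt
    = \phi(\alpha) + \int_\alpha^\infty \phi'(t)\, \mathbf{1}_{\{t < q(x)\}}\, dt.
\end{equation*}
Integrating against $\mu$ over $\{q>\alpha\}$ and using $\mu(q>\alpha)<\infty$ (since $\mu$ is a probability measure), the contribution of the constant term is exactly $\phi(\alpha)\mu(q>\alpha)$, giving the first term on the right-hand side of the claimed identity.

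Next I would handle the remaining double integral
\begin{equation*}
    \int_{\{q>\alpha\}} \int_\alpha^\infty \phi'(t)\, \mathbf{1}_{\{t<q(x)\}}\, dt\, d\mu(x)
\end{equation*}
by Tonelli's theorem. Here is where the monotonicity assumption on $\phi|_{[\alpha,\infty)}$ is essential: it implies $\phi'$ has constant sign almost everywhere on $[\alpha,\infty)$, so the integrand has constant sign and Tonelli applies without any integrability hypothesis. After swapping, the inner integration yields $\mu(\{q>\alpha\}\cap\{q>t\}) = \mu(q>t)$ for $t\ge\alpha$, since $\{q>t\}\subset\{q>\alpha\}$. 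This produces exactly $\int_\alpha^\infty \phi'(t)\mu(q>t)\,dt$, completing the identity.

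The only mild subtlety, which I would address briefly, is that the resulting integral $\int_\alpha^\infty \phi'(t)\mu(q>t)\,dt$ is well-defined (possibly $\pm\infty$) precisely because $\phi'$ has constant sign on $[\alpha,\infty)$; without monotonicity one would have to separately assume some integrability on both sides. Beyond this bookkeeping point, there is no genuine obstacle — the argument is a standard layer-cake computation.
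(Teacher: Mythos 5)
Your proof is correct and follows essentially the same route as the paper: the paper also reduces the identity to Tonelli's theorem applied to $\phi'(t)\chi_E(x,t)$ over the set $E=\{(x,t):q(x)>t\ge\alpha\}$, which is precisely the region encoded by your indicator $\mathbf{1}_{\{t<q(x)\}}$ for $t\ge\alpha$, $q(x)>\alpha$; the only difference is that the paper leaves the layer-cake bookkeeping implicit by citing Rudin's Theorem 8.16, while you spell it out, including the key observation that monotonicity of $\phi$ on $[\alpha,\infty)$ gives $\phi'$ a.e.\ constant sign so that Tonelli applies without an integrability hypothesis.
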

\begin{proof}
This is a slightly different
statement of \cite[Theorem 8.16]{Ru}.
The proof is the same.
One can consider $\phi'(t)\chi_E(x,t)$ with $E=\{(x,t)\in X\times [\alpha,\infty) :q(x) > t\ge \alpha\}$
and apply Tonelli's theorem to its integral.
\end{proof}

\begin{Proposition}[Proposition 4.2 in \cite{Bam20a}]\label{thegradientintegralestimate}
Suppose that the same condition as in Theorem \ref{thm:Bamler gradient estimates} holds for a complete Ricci flow
$(M,g(\cdot))$ defined on $[s,t].$ 
Write $d\nu = K(x,t\,|\cdot,s)\, dg_s$.
Then for any $x\in M, 1\le p<\infty,$ and any measurable subset $\Omega\subset M,$ we have
\begin{equation}
\label{ineq: Lp gradient est}
    (t-s)^{p/2}
    \int_{\Omega}
    \left(\frac{|\nabla_x K(x,t\,|\,\cdot,s)|}{K(x,t\,|\,\cdot,s)}\right)^p\, d\nu
    \le C(n,p)
    \nu(\Omega)\left(
    - \log (\nu(\Omega)/2)
    \right)^{p/2}.
\end{equation}
Moreover, for any $x\in M$ and $v\in T_xM$ with 
$|v|_{g(t)}=1$, it holds that 
\begin{equation}
\label{ineq: L2 gradient est}
    (t-s)\int_M 
    \left|
        \frac{
        \partial_v K(x,t\,|\, \cdot, s)
    }{K(x,t\,|\, \cdot, s)}
    \right|^2\, d\nu 
    \le 1/2.
\end{equation}
    
\end{Proposition}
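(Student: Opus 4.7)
The plan is to apply Theorem \ref{thm:Bamler gradient estimates} to heat-equation solutions of the form $u_\phi(y, t') := \int_M \phi(z)\, K(y, t'\,|\, z, s)\, dg_s(z)$ for carefully chosen test functions $\phi: M\to [0,1]$, and then integrate the resulting pointwise gradient bounds against the layer-cake formula (Lemma \ref{lem: distr integrals}). For any measurable $\Omega \subset M$ on the time-$s$ slice, $u_\Omega := u_{\chi_\Omega}$ is a heat-equation solution on $M \times (s, t_1]$ with values in $[0,1]$, so the $T=0$ case of Theorem \ref{thm:Bamler gradient estimates} yields
\[
    |\nabla_y u_\Omega(y, t')|_{g(t')} \le \Phi'_{t'-s}\!\left(\Phi_{t'-s}^{-1}(u_\Omega(y, t'))\right)
\]
for every $t' \in (s, t_1]$; denote the right-hand side by $F_{t'-s}(u_\Omega(y,t'))$. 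This is the workhorse inequality.

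For the directional $L^2$ estimate (\ref{ineq: L2 gradient est}), fix $(x, t)$ and the unit vector $v \in T_xM$ from the statement, set $q(z) := \partial_v \log K(x, t\,|\, z, s)$, and test with $\Omega_\alpha := \{q > \alpha\}$ for each $\alpha \ge 0$. Differentiating $u_{\Omega_\alpha}$ in the $v$-direction at $(x, t)$ and pushing the derivative inside the integral gives
\[
    \partial_v u_{\Omega_\alpha}(x, t) = \int_{\Omega_\alpha} \partial_v K(x, t\,|\,\cdot, s)\, dg_s = \int_{\Omega_\alpha} q\, d\nu =: \gamma(\alpha),
\]
and the workhorse inequality yields $\gamma(\alpha) \le F_{t-s}(a(\alpha))$ with $a(\alpha) := \nu(\Omega_\alpha)$. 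Applying Lemma \ref{lem: distr integrals} with $\phi(r) = r^2$ (after observing $\int_M q_+^2\, d\nu = \int_0^\infty \gamma(\alpha)\, d\alpha$) produces
\[
    \int_M q_+^2\, d\nu \le \int_0^\infty F_{t-s}(a(\alpha))\, d\alpha.
\]
Using the auxiliary bound $\alpha a(\alpha) \le \gamma(\alpha) \le F_{t-s}(a(\alpha))$ to force $a(\alpha)$ below the maximal solution $a^*(\alpha)$ of $\alpha a = F_{t-s}(a)$, and invoking the sharp two-sided Mills ratio estimates for $\Phi_{t-s}$, the last integral is bounded by $1/(4(t-s))$. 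By symmetry (replacing $v$ with $-v$) the same bound holds for $\int_M q_-^2\, d\nu$, so $\int_M q^2\, d\nu \le 1/(2(t-s))$.

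For the full $L^p$ bound (\ref{ineq: Lp gradient est}), fix an orthonormal basis $\{e_i\}_{i=1}^n$ of $(T_xM, g(t))$ and apply the argument above to each $\pm e_i$. From the resulting directional bounds and the elementary inequality $|\nabla_x \log K|^2 \le n \max_{i,\pm}(\pm \partial_{e_i}\log K)_+^2$, a union bound yields a uniform sub-Gaussian tail estimate
\[
    \nu\!\left(|\nabla_x \log K(x,t\,|\,\cdot, s)| > \alpha\right) \le C_n \exp\!\left(-c_n (t-s)\, \alpha^2\right),
\]
with $C_n, c_n$ depending only on $n$. To obtain the inequality restricted to $\Omega$, apply Lemma \ref{lem: distr integrals} with $\phi(r) = r^p$ and bound the distribution function by $\nu(\{|\nabla_x\log K|>\alpha\}\cap \Omega) \le \min\{\nu(\Omega),\, C_n e^{-c_n(t-s)\alpha^2}\}$, then split the $\alpha$-integral at the threshold $\alpha_0 \asymp (-\log(\nu(\Omega)/2)/(t-s))^{1/2}$. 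Routine computation yields the desired inequality (\ref{ineq: Lp gradient est}) with $C(n,p)$ depending only on $n$ and $p$.

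The main obstacle is recovering the sharp constant $1/2$ in (\ref{ineq: L2 gradient est}). The elementary sub-Gaussian tail derived from the looser inequality $\alpha a(\alpha) \le F_{t-s}(a(\alpha))$ has the correct order in $(t-s)$ but a non-sharp constant; the sharp $1/2$ requires integrating the $F_{t-s}$-bound $\gamma(\alpha) \le F_{t-s}(a(\alpha))$ directly, with the two-sided Mills asymptotics (which become equality for the Euclidean heat kernel) forcing the integral to match its Gaussian value exactly. A minor technical point is differentiating $u_\Omega$ through the integral sign, justified by the bounded-curvature assumption and Gaussian-type bounds on $K$ together with its first $x$-derivatives.
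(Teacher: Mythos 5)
Your $L^p$ argument (sub-Gaussian tail via union bound over directions, then layer-cake split at $\alpha_0\asymp(-\log(\nu(\Omega)/2))^{1/2}$) is a valid alternative to the paper's route, which instead applies the rearrangement claim with $f(t)=|t|^p$ together with the Mills-type bound $h(a)\le C(-\log a)^{1/2}$ for small $a$; both produce a constant $C(n,p)$, and the difference is essentially organizational. However, your $L^2$ argument has a genuine gap at the step where you claim the bound by $1/(4(t-s))$.

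The step that does not close is the assertion that $\int_0^\infty F_{t-s}(a(\alpha))\,d\alpha\le 1/(4(t-s))$ follows from $a(\alpha)\le a^*(\alpha)$ plus Mills asymptotics. First, $F$ is not monotone (it increases on $(0,1/2)$ and decreases on $(1/2,1)$), so $a(\alpha)\le a^*(\alpha)$ does not give $F(a(\alpha))\le F(a^*(\alpha))$ when either lies near $1/2$; near $\alpha=0$ the maximal solution $a^*$ is close to $1$, which provides no control at all. Second, even granting monotone comparison, you do not actually compute or verify that $\int_0^\infty F(a^*(\alpha))\,d\alpha$ equals the target; equality in the Euclidean model is an indication, not a proof of extremality. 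The paper (following Bamler's Proposition 4.2) resolves exactly this: it introduces the decreasing rearrangement $h$ of $q$ and its running integral $H(a)=\int_0^a h$, observes via the \textbf{Claim} that $\int_M q^2\,d\nu=\int_0^1 h^2$, and proves $H\le F$, $H$ concave, $H(0)=H(1)=0$; then double integration by parts (with boundary terms vanishing because $h(a)\le C(-\log a)^{1/2}$) gives
\[
\int_0^1 (H')^2=-\int_0^1 H''H\le -\int_0^1 H''F=-\int_0^1 HF''\le -\int_0^1 FF''=\tfrac12,
\]
using $H''\le0$, $F''=-1/(2F)\le 0$, and $H\le F$. Your $\gamma(\alpha)\le F(a(\alpha))$ is exactly $H(a(\alpha))\le F(a(\alpha))$, so you have the right input, but the passage from that pointwise inequality to the sharp integral bound requires this convexity/IBP argument on the rearranged side, which your sketch replaces with an unjustified claim. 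Without it, the sharp constant $1/2$ is not obtained. (Also note, as a minor point, that the change of variables $\int_M q_+^2\,d\nu=\int_0^\infty\gamma(\alpha)\,d\alpha$ is simply Fubini; Lemma \ref{lem: distr integrals} is not needed there, but is needed for the rearrangement identity that underpins the IBP step.)
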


\begin{proof}
By parabolic rescaling and time shifting, we may
assume that $[s,t]=[0,1].$
Let $q = \frac{
        \partial_v K(x,1\,|\, \cdot, 0)
    }{K(x,1\,|\, \cdot, 0)}.$ 
The proof of \cite[Proposition 4.2]{Bam20a} 
applies here almost without many variations. 
For completeness, we will sketch the proof and will refer to Bamler's original proof wherever his argument comes through.
Let
\[
    \lambda(\alpha) = \nu(q\ge \alpha),
    \quad
    h(t) = \sup\{ \alpha: \lambda(\alpha)\ge t\},
\]
where we have written
$\nu(q\ge \alpha)
= \nu(\{q\ge \alpha\})$, and the notations such as $\nu(q> \alpha)$, $\nu(q=\alpha)$, etc., shall also be defined accordingly.
Obviously, $\lambda$ and $h$ are non-increasing and left continuous.

    
For any measurable subset $\Omega\subset M$, we have, by Proposition \ref{thm:Bamler gradient estimates},
\begin{eqnarray}\label{00}
    \left|\int_\Omega q\, d\nu\right|
    = \left|
        \partial_v \int_{M}
        \chi_\Omega K(x,1\,|\cdot,0)\, dg_0
    \right|
    \le F(\nu(\Omega)),
\end{eqnarray}
where
    $F(s) = \Phi'(\Phi^{-1}(s)).$
The integration and differentiation are interchangeable by similar (and indeed, much simpler) arguments as in the appendix. 

By the definition of $h$, 
Bamler proved that
\[
    \nu(q\ge \alpha)
    = |h\ge \alpha|,
    \quad
    \nu(q>\alpha)
    = |h>\alpha|,
    \quad
    \nu(q=\alpha) = |h=\alpha|.
\]
It follows directly from Lemma \ref{lem: distr integrals} that
\[
    \int_{ \{h>\alpha\} }
    \phi(h)
    = \int_{\{q>\alpha\}}
    \phi(q)\, d\nu,
\]
whenever $\phi|_{[\alpha,\infty)}$
is monotone and absolutely continuous
on any compact intervals.
\\



\noindent\textbf{Claim:} 
Assume $f:\mathbb{R}\to\mathbb{R}$ 
is absolutely continuous on any compact interval and is monotone on both $[A,\infty)$ and $(-\infty,-A]$ for some $A\in \mathbb{R}_+.$ Note that we do not require $f$ to have the same monotonicity on $[A,\infty)$ and $(-\infty,-A]$. For any $\alpha\in \mathbb{R}\cup
\{\pm \infty\}$
and any measurable subset $\Omega\subset M$ with
\[
    \{q>\alpha\}
    \subset \Omega \subset \{q\ge \alpha\},
\]
it holds that
\begin{equation} \label{eq: Bamler(4.12)}
     \int_{\Omega} f(q)\, d\nu
    = \int_0^{\nu(\Omega)} f(h).
\end{equation}
\begin{proof}[Proof of the claim]

The case where $\alpha=+\infty$ is obvious. We only need to  consider the case where $\alpha\in\mathbb{R}$. This is because the case where $\alpha = -\infty$ follows from the case where $\alpha\in\mathbb{R}$ and the following observation: when $\alpha = -\infty$, we have $\Omega=M$ and
\[
    \int_M f(q)\, d\nu
    = \lim_{\alpha\to -\infty}
    \int_{\{q>\alpha\}} f(q)\, d\nu
    = \lim_{\alpha\to -\infty}
    \int_{\{h>\alpha\}} f(h)
    = \int_0^1 f(h),
\]
where the first and the third equalities can be justified
by applying the monotone convergence theorem to 
$f(q)$ and $f(h)$ using the monotonicity of 
$f|_{(-\infty,-A]}.$  

Fix an arbitrary $\epsilon>0$.
Without loss of generality, we may assume $A>\alpha$, for otherwise we may always enlarge $A$. Then $\{q>A\}\subset  \{q>\alpha\}
\subset \Omega$, and there is a partition 
$\alpha=b_0<b_1<\cdots<b_m=A$ such that
${\rm osc}_{[b_{i-1},b_i]} f \le \epsilon/2$
for all $i=1,\cdots,m.$
As in \cite{Bam20a}, we have
\begin{align}\label{22}
    &\ \left|
    \int_{\Omega} f(q)\, d\nu
    - \int_0^{\nu(\Omega)} f(h)
    \right|\\\nonumber
    \le&\ 
     \left|
    \int_{\Omega\cap \{q=\alpha\}} f(q)\, d\nu
    - \int_{[0,\nu(\Omega)]\cap \{h=\alpha\}} f(h)
    \right|
    + \sum_{i=1}^m 
    \left|
    \int_{\{b_{i-1}<q\le b_i\}} 
    f(q)\, d\nu
    - \int_{
    \{b_{i-1}<h\le b_i\}} f(h)
    \right|\\\nonumber
    & 
    + \left|
        \int_{\{q>A\}} f(q) \, d\nu
        - \int_{\{h>A\}} f(h)
    \right|\\ \nonumber
    \le &\, \epsilon,
\end{align}
where we have applied Lemma \ref{lem: distr integrals}
to conclude that the term on the third line of the above formula is indeed zero.

\end{proof}

By the same arguments as in \cite{Bam20a},
the claim above implies that
for $a\in (0,1),$
\begin{align*}
    ah(a) &\le \int_0^a h
    \le F(a),\\
    (1-a)h(a) &\ge \int_a^1 h
    \ge -F(a).
\end{align*}
When $a\in (0,1/4),$ we also have
\[
    h(a) \le F(a)/a \le 
    C(-\log a)^{1/2},\quad
    h(1-a) \ge -C(-\log a)^{1/2}.
\]
Then, applying the claim to $f(t)=|t|^p$ for any $1\le p<\infty$, \eqref{ineq: Lp gradient est} can be proved using the same arguments as in \cite[Proposition 4.2]{Bam20a}.

To prove (\ref{ineq: L2 gradient est}), let us define
\[
    H(a) := \int_0^a h \le F(a),
\]
and clearly $H(0)=0$.
Applying the claim with $f(t)=t$ and $\alpha=-\infty$, we have
$H(1)=\int_0^1 h = \int_M q\, d\nu = 0.$ From the definition of $H$, we have $H''\le 0$ in the weak sense. Furthermore, we may argue as \cite[Proposition 4.2]{Bam20a} and obtain
\begin{eqnarray}\label{L2}
\int_0^1h(a)^2da\leq \frac{1}{2}.
\end{eqnarray}
Note that the boundary terms
produced by integration by parts 
in \cite[Proposition 4.2]{Bam20a}
vanish because of the fact that
\[
    H'(a) = h(a) \le F(a)/a
    \le C(-\log a)^{1/2},
\]
when $a\in (0,1/4).$
Applying the claim above with $f(t)=t^2$
and $\alpha=-\infty$, (\ref{ineq: L2 gradient est}) follows from (\ref{L2}).

\end{proof}

\section{The Nash entropy based at varying points}

After having established Bamler's gradient estimates on noncompact manifolds, we subsequently prove several results for the Nash entropy. Following \cite{Bam20a}, we use the notation
\begin{eqnarray}
\N_s^*(x,t):=\N_{(x,t)}(t-s)
\end{eqnarray}
to manifest the dependence of the Nash entropy on its base point. Here $\N_{(x,t)}(t-s)$ is as defined in (\ref{nashentropy}). Henceforth we will also define the time-dependent probability measure
\begin{eqnarray*}
\nu_{x,t}(s)(A)=\int_A K(x,t\,|\,\cdot,s)\,dg(s)
\end{eqnarray*}
for all measurable 
subset $A\subset M$. 
The $L^1$ Wassernstein distance is also a very important tool in \cite{Bam20a}:

\begin{Definition}
Let $(X,d)$ be a complete metric space and $\mu$, $\nu$ probability measures on $X$. Then the $L^1$ Wassernstein distance between $\mu$ and $\nu$ is defined as
\begin{eqnarray*}
d_{W_1}(\mu,\nu)=\sup_{f}\left(\int_X fd\mu-\int_X fd\nu\right),
\end{eqnarray*}
where the supremum is taken over all bounded $1$-Lipshcitz functions.
\end{Definition}

 The following observation made by Bamler is merely a consequence of Lemma \ref{Coarse_gradient_estimate}.

\begin{Proposition} [Lemma 2.7 in \cite{Bam20a}]\label{Warmono}
Let $(M,g(t))_{t\in[0,T]}$, where $0<T<\infty$, be a Ricci flow with bounded curvature. Assume that $s$, $t$, $t_1$, and $t_2\in[0, T]$ satisfy $s<t\leq t_1$, $t_2$. Let $x_1$, $x_2\in M$. Then the following holds
\begin{eqnarray*}
d_{W_1}^{g(s)}(\nu_{x_1,t_1}(s),\nu_{x_2,t_2}(s))\leq d_{W_1}^{g(t)}(\nu_{x_1,t_1}(t),\nu_{x_2,t_2}(t)).
\end{eqnarray*}
\end{Proposition}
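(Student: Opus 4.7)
The plan is to invoke the Kantorovich--Rubinstein duality
\[
    d_{W_1}(\mu,\nu) = \sup_f \left(\int f\, d\mu - \int f\, d\nu\right),
\]
where the supremum is over bounded $1$-Lipschitz functions $f$ on the underlying metric space. It therefore suffices to show that for any bounded function $f$ on $M$ which is $1$-Lipschitz with respect to $g(s)$, one has
\[
    \int_M f\, d\nu_{x_1,t_1}(s) - \int_M f\, d\nu_{x_2,t_2}(s)
    \le d_{W_1}^{g(t)}\bigl(\nu_{x_1,t_1}(t), \nu_{x_2,t_2}(t)\bigr).
\]
Set $T_{\max}:= \max(t_1,t_2)$ and solve the heat equation coupled with the flow, $\Box u = 0$, on $M\times [s,T_{\max}]$ with initial data $u(\cdot,s)=f$. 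Under the bounded curvature assumption, a bounded solution exists and is unique (e.g.\ by \cite[Theorem 12.14]{RFTA-II}).

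The key step is to transport the Lipschitz property of $f$ forward in time: by Bochner's formula along the Ricci flow,
\[
    \Box |\nabla u|^2 = -2|\nabla^2 u|^2 \le 0,
\]
so Lemma \ref{Coarse_gradient_estimate} applied to $u$ gives $|\nabla u(\cdot,r)|_{g(r)}\le 1$ for every $r\in[s,T_{\max}]$. In particular, $u(\cdot,t)$ is $1$-Lipschitz with respect to $g(t)$. Next, invoke the basic conservation law
\[
    \frac{d}{dr}\int_M u(\cdot,r)\, K(x_i,t_i\,|\,\cdot,r)\, dg_r
    = \int_M (\Box u) K - u (\Box^* K)\, dg_r = 0, \qquad i=1,2,
\]
valid for $r\in[s,t_i]$. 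Evaluating at $r=s$ and $r=t$ (noting $s<t\le t_i$) gives
\[
    \int_M f\, d\nu_{x_i,t_i}(s) = \int_M u(\cdot,t)\, d\nu_{x_i,t_i}(t),
\]
and applying the dual characterization of $d_{W_1}^{g(t)}$ to the $1$-Lipschitz test function $u(\cdot,t)$ finishes the proof after taking the supremum over $f$.

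The main technical obstacle will be justifying the conservation law in the noncompact setting, since the formal Stokes computation above needs decay at infinity to kill boundary terms. This is handled by a standard cutoff argument: choose a spatial cutoff $\eta_R$ supported in $B_{g(s)}(x_0,2R)$, equal to $1$ on $B_{g(s)}(x_0,R)$, with $|\nabla \eta_R|_{g(s)}\le 2/R$, apply Stokes to $u\eta_R K$, and let $R\to\infty$. The integrand is controlled because $u$ and $|\nabla u|_{g(r)}$ are uniformly bounded (by $\sup|f|$ and $1$ respectively), while $K(x_i,t_i\,|\,\cdot,r)$ and $|\nabla K|$ enjoy Gaussian decay on each compact sub-interval of time under the bounded curvature assumption. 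Unbounded $1$-Lipschitz $f$ can either be truncated at level $\pm N$ and recovered by monotone convergence using finite first moments of the heat kernel measures, or handled directly since the Gaussian decay of $K$ integrates any linearly-growing function.
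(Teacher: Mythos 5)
Your argument is exactly what the paper intends when it declares this result ``merely a consequence of Lemma \ref{Coarse_gradient_estimate}'': Kantorovich--Rubinstein duality, forward preservation of the Lipschitz constant via Bochner plus the maximum principle, and the pairing identity between solutions of $\Box u = 0$ and the conjugate heat kernel, so the proof is correct and matches the paper's (and Bamler's) approach. The only cosmetic remark is that Lemma \ref{Coarse_gradient_estimate} is stated under the normalization $|u|\le 1$, so for a bounded $1$-Lipschitz $f$ one should first rescale by $\sup|f|$ before applying it; this is harmless since the gradient bound scales linearly.
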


Our generalization of Bamler's gradient estimate (Proposition \ref{thm:Bamler gradient estimates}) also leads to the generalization of the following theorem and corollary in \cite{Bam20a}. These results indicate that, fixing the evaluating time, the Nash entropy satisfies good properties with respect to its base point.

\begin{Theorem} [Theorem 5.9 in \cite{Bam20a}]\label{thedifferentialestimatesofthenashentropy}
Let $(M,g(t))_{t\in [0,T]}$, where $0<T<\infty$, be a complete Ricci flow with bounded curvature. Let $s\in [0,T)$ and assume $R(\cdot,s)\geq   R_{\operatorname{min}}$, where $R_{\operatorname{min}}$ is a real number. Then, on $M\times (s,T]$, it holds that
\begin{eqnarray*}
|\nabla \N_s^*|\leq\left(\frac{n}{2(t-s)}-R_{\operatorname{min}}\right)^{\frac{1}{2}}, \ \ -\frac{n}{2(t-s)}\leq\left(\frac{\partial}{\partial t}-\Delta_t\right) \N_s^*\leq0.
\end{eqnarray*}
\end{Theorem}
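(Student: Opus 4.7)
The plan is to prove both estimates by direct calculation using the heat-kernel identity $\Box_xK=0$, Bamler's $L^2$ gradient estimate \eqref{ineq: L2 gradient est}, and Perelman's monotonicity of $\N(\tau)$ in the backward time $\tau=t-s$. Throughout I write $K=K(x,t\,|\,y,s)$, $f=-\log K-\tfrac{n}{2}\log(4\pi(t-s))$, and $d\nu=K\,dg_s$, so that $\N_s^*(x,t)=\int_M fK\,dg_s-\tfrac{n}{2}$. All interchanges of differential operators with the integral $\int_M\cdot\,dg_s$ below will have to be justified in the noncompact setting through Shi's derivative estimates, bounded curvature, and the Gaussian bounds on $K$ and its derivatives, in parallel with the justifications already sketched in the proof of Proposition \ref{thegradientintegralestimate}.

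For the heat-equation bounds, the identities $\nabla_xf=-\nabla_xK/K$ and $\Delta_xf=-\Delta_xK/K+|\nabla_xK|^2/K^2$, together with $\Box_xK=0$, give by a direct computation
\[
\Box_x\N_s^*(x,t)=\int_M |\nabla_xf|^2\,d\nu-\frac{n}{2(t-s)}.
\]
The lower bound $\Box_x\N_s^*\ge-\tfrac{n}{2(t-s)}$ is immediate. Summing \eqref{ineq: L2 gradient est} over an orthonormal basis of $T_xM$ yields $\int_M|\nabla_xf|^2\,d\nu\le\tfrac{n}{2(t-s)}$, which gives the upper bound $\Box_x\N_s^*\le 0$.

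For the gradient bound, differentiating $\N_s^*$ in $x$ and using $\int_M(\partial_vK/K)\,d\nu=0$ I obtain
\[
\partial_v\N_s^*=\int_M(f-\bar f)\,\partial_v\log K\,d\nu,\qquad \bar f:=\int_Mf\,d\nu.
\]
Cauchy--Schwarz combined with \eqref{ineq: L2 gradient est} yields $|\partial_v\N_s^*|^2\le \operatorname{Var}_\nu(f)/(2(t-s))$. I then invoke the Poincar\'e inequality $\operatorname{Var}_\nu(\phi)\le 2(t-s)\int_M|\nabla_y\phi|^2\,d\nu$ for the heat-kernel measure, which is derived by extending $\phi$ to a solution $\psi$ of the forward heat equation and combining the semigroup identity $\tfrac{d}{dt'}\int\psi^2\,d\nu_{x,t}(t')=-2\int|\nabla\psi|^2\,d\nu_{x,t}(t')$ with the Bochner-type monotonicity $\tfrac{d}{dt'}\int|\nabla\psi|^2\,d\nu_{x,t}(t')\le 0$ (a consequence of the Ricci-flow identity $\Box|\nabla\psi|^2=-2|\nabla^2\psi|^2\le 0$). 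Taking $\phi=f$ gives $|\partial_v\N_s^*|^2\le\int_M|\nabla_yf|^2\,d\nu$. Finally, Perelman's relation $\W=\N+\tau\N'$ produces
\[
\partial_t\N_s^*(x,t)=\int_M(|\nabla_yf|^2+R_{g(s)})\,d\nu-\frac{n}{2(t-s)},
\]
and Perelman's monotonicity $\partial_t\N_s^*\le 0$ together with $R_{g(s)}\ge R_{\min}$ yields $\int_M|\nabla_yf|^2\,d\nu\le\tfrac{n}{2(t-s)}-R_{\min}$, which completes the bound on $|\nabla_x\N_s^*|^2$.

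The principal technical obstacle is justifying the multiple interchanges of $\Box_x$, $\nabla_x$, and the spatial integrals on the noncompact manifold, as well as the integration-by-parts steps in the semigroup derivation of the Poincar\'e inequality. All of these rely on Gaussian decay of $K$ and its first two spatial derivatives and on Shi-type estimates, both available under the bounded-curvature hypothesis, so the arguments should closely parallel those already used in the proof of Proposition \ref{thegradientintegralestimate}.
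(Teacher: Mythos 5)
Your argument follows essentially the same route as the paper, which simply establishes the interchangeability of integration and differentiation via the appendix (Theorem \ref{interchangeability}) and then quotes Bamler's proof of his Theorem 5.9 line-by-line, using \eqref{ineq: L2 gradient est} and a Nash-entropy result from \cite{CMZ21}. Your reconstruction of Bamler's argument --- the computation $\Box_x\N_s^*=\int_M|\nabla_x f|^2\,d\nu-\tfrac{n}{2(t-s)}$, the sum of \eqref{ineq: L2 gradient est} over an orthonormal frame for the upper bound, the decomposition $\partial_v\N_s^*=\int_M(f-\bar f)\,\partial_v\log K\,d\nu$ with Cauchy--Schwarz, and the semigroup/Bochner derivation of the Poincar\'e inequality for $d\nu$ --- is all in the right spirit.

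One slip in your last step: the formula $\int_M(|\nabla_yf|^2+R_{g(s)})\,d\nu-\tfrac{n}{2(t-s)}$ is $\partial_\tau\N_{(x,t)}(\tau)\big|_{\tau=t-s}$ with the base point $(x,t)$ held fixed (equivalently $-\partial_s\N_s^*(x,t)$), not $\partial_t\N_s^*(x,t)$. The latter picks up an additional term because increasing $t$ also moves the base point of the conjugate heat kernel; indeed $\partial_t\N_s^*=-\int_M(\Delta_xK)(\log K+1)\,dg_s-\tfrac{n}{2(t-s)}$, which is not the quantity you wrote. The monotonicity you need is Perelman's $\partial_\tau\N_{(x,t)}(\tau)\le 0$ (not a $t$-monotonicity of $\N_s^*$, which does not hold in general --- only $\Box_x\N_s^*\le 0$ does). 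Once stated this way the bound $\int_M|\nabla_yf|^2\,d\nu\le\tfrac{n}{2(t-s)}-R_{\min}$ and the conclusion follow exactly as you wrote.
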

\begin{proof}
The proof of Theorem \ref{interchangeability} shows that the integration and the differentiation are always interchangeable in the computations of both $\nabla \N_s^*(\cdot,t)$ and $\Box_{x,t}\N_s^*(x,t)$. Hence, the proof follows after \cite{Bam20a} line-by-line, using (\ref{ineq: L2 gradient est}) above and Proposition 2.1 in \cite{CMZ21}.
\end{proof}

As a direct consequence, we have the following Corollary.

\begin{Corollary}[Corollary 5.11 in \cite{Bam20a}] \label{Harnack}
In the same settings as the previous theorem, if $R(\cdot, t^*)\geq R_{\operatorname{min}}$ and $s<t^*\leq t_1$, $t_2$, where $s$, $t^*$, $t_1$, and $t_2\in[0,T]$, then for $x_1$ and $x_2\in M$, we have
\begin{eqnarray*}
\N_s^*(x_1,t_1)-\N_s^*(x_2,t_2)\leq\left(\frac{n}{2(t^*-s)}-R_{\operatorname{min}}\right)^{\frac{1}{2}}d_{W_1}^{g(t^*)}(\nu_{x_1,t_1}(t^*),\nu_{x_2,t_2}(t^*))+\frac{n}{2}\log\left(\frac{t_2-s}{t^*-s}\right).
\end{eqnarray*}
\end{Corollary}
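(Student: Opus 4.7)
The plan is to reduce the corollary to the differential estimates of Theorem \ref{thedifferentialestimatesofthenashentropy} by interpolating through the intermediate time $t^*$. For $i=1,2$, define the real-valued function
$$ F_i(t') := \int_M \N_s^*(y, t') \, d\nu_{x_i, t_i}(t')(y), \qquad t'\in [t^*, t_i]. $$
Since $K(x_i, t_i\,|\,\cdot, t')$ satisfies the conjugate heat equation in its second pair of arguments, the integration-by-parts identity from the introduction yields
$$ \frac{d}{dt'} F_i(t') = \int_M (\Box \N_s^*)(y, t') \, d\nu_{x_i, t_i}(t')(y). $$
Plugging in the pointwise inequalities $-\tfrac{n}{2(t'-s)}\le \Box\N_s^* \le 0$ from Theorem \ref{thedifferentialestimatesofthenashentropy} and integrating from $t^*$ up to $t_i$, then sending $t'\uparrow t_i$ (so that $\nu_{x_i,t_i}(t')\to \delta_{x_i}$ and $F_i(t')\to \N_s^*(x_i,t_i)$ by continuity of $\N_s^*$), produces the two one-sided comparisons
$$ \N_s^*(x_1, t_1) \le \int_M \N_s^*(y, t^*)\, d\nu_{x_1, t_1}(t^*)(y), $$
$$ \N_s^*(x_2, t_2) \ge \int_M \N_s^*(y, t^*)\, d\nu_{x_2, t_2}(t^*)(y) - \frac{n}{2}\log\frac{t_2-s}{t^*-s}. $$

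Subtracting these two inequalities reduces the problem to controlling the difference
$\int \N_s^*(\cdot,t^*)\, d\nu_{x_1,t_1}(t^*) - \int \N_s^*(\cdot,t^*)\, d\nu_{x_2,t_2}(t^*)$
by a Wasserstein quantity. But this is exactly where the gradient estimate of Theorem \ref{thedifferentialestimatesofthenashentropy} enters: the map $y\mapsto \N_s^*(y,t^*)$ is $L$-Lipschitz with respect to $g(t^*)$, where $L=(\tfrac{n}{2(t^*-s)}-R_{\min})^{1/2}$. The Kantorovich--Rubinstein duality in the definition of $d_{W_1}^{g(t^*)}$ then gives
$$ \int_M \N_s^*(\cdot,t^*)\, d\nu_{x_1,t_1}(t^*) - \int_M \N_s^*(\cdot,t^*)\, d\nu_{x_2,t_2}(t^*) \le L\cdot d_{W_1}^{g(t^*)}\bigl(\nu_{x_1,t_1}(t^*), \nu_{x_2,t_2}(t^*)\bigr), $$
which together with the two one-sided comparisons above is precisely the stated estimate.

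The main technical obstacle is the noncompactness of $M$. Three points need care: (i) justifying the interchange of $d/dt'$ and the integration over $M$ in the computation of $F_i'(t')$, which I would handle by the same type of dominated-convergence/interchangeability argument already invoked in the proof of Theorem \ref{thedifferentialestimatesofthenashentropy}, exploiting bounded curvature and Gaussian heat-kernel decay; (ii) the Wasserstein duality, which is stated for \emph{bounded} Lipschitz test functions while $\N_s^*(\cdot,t^*)$ is only known to be Lipschitz---I would truncate $\N_s^*$ at heights $\pm N$, apply the duality, and pass $N\to\infty$ using integrability of $\N_s^*(\cdot,t^*)$ against each probability measure $\nu_{x_i,t_i}(t^*)$, which follows from its at-most-linear growth (from the gradient bound) combined with the Gaussian concentration provided by $\nu_{x_i,t_i}(t^*)$; and (iii) the continuity $F_i(t')\to \N_s^*(x_i,t_i)$ as $t'\uparrow t_i$, which reduces to continuity of $\N_s^*$ at $(x_i,t_i)$ and the concentration of the kernel. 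None of these should be serious issues under the bounded-curvature hypothesis, but they are where the deviations from the compact setting in \cite{Bam20a} must be verified.
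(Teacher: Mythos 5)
Your proof is correct and is precisely the argument that the paper defers to (it is labeled a "direct consequence" of the preceding theorem and credited to Corollary 5.11 in \cite{Bam20a}): integrate the two $\Box$-inequalities for $\N_s^*$ against the conjugate heat kernel from $t^*$ to $t_i$ to push the base point back to $t^*$, then apply the gradient bound together with Kantorovich--Rubinstein duality. The technical caveats you flag (interchange of $\partial_{t'}$ with the integral, the boundedness issue in the $W_1$ duality, and continuity as $t'\uparrow t_i$) are exactly the points the paper's appendix is set up to handle in the noncompact setting, and your proposed remedies for each are the right ones.
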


We now apply Corollary \ref{Harnack} to an ancient solution. 
\begin{Proposition}\label{asymptoticentropy}
Let $(M,g(t))_{t\in(-\infty,0]}$ be an ancient solution with bounded curvature within each compact time interval. Then for any $(x_1,t_1)$ and $(x_2,t_2)\in M\times(-\infty,0]$, it holds that
\begin{eqnarray}\label{equality}
\lim_{\tau\rightarrow\infty}\W_{(x_1,t_1)}(\tau)=\lim_{\tau\rightarrow\infty}\N_{(x_1,t_1)}(\tau)=\lim_{\tau\rightarrow\infty}\N_{(x_2,t_2)}(\tau)=\lim_{\tau\rightarrow\infty}\W_{(x_2,t_2)}(\tau).
\end{eqnarray}
\end{Proposition}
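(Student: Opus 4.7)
The strategy splits into two independent reductions. First, I would show that for each fixed base point $(x,t)\in M\times(-\infty,0]$ one has $\lim_{\tau\to\infty}\W_{(x,t)}(\tau)=\lim_{\tau\to\infty}\N_{(x,t)}(\tau)$. Since $\W_{(x,t)}(\tau)$ is monotonically decreasing in $\tau$, the limit $\W_\infty:=\lim_{\tau\to\infty}\W_{(x,t)}(\tau)\in[-\infty,0]$ exists. A standard Ces\`aro-type argument applied to the averaging formula (\ref{average}) then gives $\lim_{\tau\to\infty}\N_{(x,t)}(\tau)=\W_\infty$ as well: the contribution of the integral near $\eta=0$ is harmless because $\W_{(x,t)}(\eta)\to 0$ as $\eta\downarrow 0$, while the contribution for large $\eta$ converges to $\W_\infty$ by monotonicity (the argument is identical in the cases $\W_\infty$ finite and $\W_\infty=-\infty$). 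This reduces the proposition to proving $\lim_{\tau\to\infty}\N_{(x_1,t_1)}(\tau)=\lim_{\tau\to\infty}\N_{(x_2,t_2)}(\tau)$.

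Second, I would apply Corollary \ref{Harnack} to the restricted flow $(M,g(t))_{t\in[s,0]}$ for $s\ll 0$. On any ancient Ricci flow with bounded curvature on compact time intervals, the evolution $\partial_t R\ge \Delta R+\tfrac{2}{n}R^2$ combined with the maximum principle forces $R\ge 0$ everywhere (otherwise the ODE $y'=\tfrac{2}{n}y^2$ with negative initial value would blow up in backward finite time, contradicting the ancient hypothesis), so we may take $R_{\min}=0$. Setting $t^*:=\min(t_1,t_2)$, we get for every $s<t^*$
\[
\N_s^*(x_1,t_1)-\N_s^*(x_2,t_2)\le\sqrt{\tfrac{n}{2(t^*-s)}}\,D+\tfrac{n}{2}\log\!\left(\tfrac{t_2-s}{t^*-s}\right),
\]
where $D:=d_{W_1}^{g(t^*)}(\nu_{x_1,t_1}(t^*),\nu_{x_2,t_2}(t^*))$ is a fixed finite quantity (finiteness follows from the Gaussian-type decay of the heat kernel on the bounded-curvature interval $[t^*,\max(t_1,t_2)]$; alternatively one may bootstrap via Proposition \ref{Warmono}). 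As $s\to-\infty$, the first term on the right vanishes, and the logarithm term tends to $\log 1=0$ since $(t_2-s)/(t^*-s)=1+(t_2-t^*)/(t^*-s)\to 1$. Translating via $\N_s^*(x_i,t_i)=\N_{(x_i,t_i)}(t_i-s)$ and $t_i-s\to\infty$ yields
\[
\limsup_{\tau\to\infty}\N_{(x_1,t_1)}(\tau)\le\liminf_{\tau\to\infty}\N_{(x_2,t_2)}(\tau),
\]
and by symmetry the reverse inequality holds as well. Combined with the first reduction, this proves (\ref{equality}).

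The substantive input is Corollary \ref{Harnack}, which rests on the generalized Bamler gradient estimates established in Section 3; once those are in hand, the proof is essentially book-keeping. The only mild subtleties are verifying $R\ge 0$ on the ancient flow (a standard maximum principle argument) and checking finiteness of the Wasserstein distance at the fixed time $t^*$, both of which are routine given the bounded-curvature hypothesis on compact time intervals.
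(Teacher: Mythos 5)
Your proof is correct and follows the same overall structure as the paper's: first reduce $\lim\W=\lim\N$ via the averaging formula \eqref{average} and monotonicity, then use Corollary~\ref{Harnack} with $R_{\min}=0$ to compare Nash entropies at two different base points. The one genuine difference is your choice of $t^*$: you fix $t^*=\min(t_1,t_2)$, so that both error terms in Corollary~\ref{Harnack} vanish directly as $s\to-\infty$ (the log term tends to $\log 1=0$ since $(t_2-s)/(t^*-s)\to 1$). The paper instead takes the moving time slice $t^*=\varepsilon s$, which makes the log term converge only to $\frac{n}{2}\log\frac{1}{1-\varepsilon}$ and forces a subsequent $\varepsilon\to 0$ limit, as well as an invocation of Proposition~\ref{Warmono} to pull the Wasserstein distance back to the fixed slice $t_1$. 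Your version is slightly cleaner and avoids both of these steps. One small caveat: your heuristic for $R\ge 0$ via the evolution inequality $\partial_t R\ge\Delta R+\frac{2}{n}R^2$ requires a maximum-principle argument that is not entirely routine on a noncompact manifold; the paper simply cites B.-L.~Chen's theorem \cite{CBl}, which is the standard reference for this fact for complete ancient solutions with bounded curvature. With that citation in place, your argument is complete and correct.
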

\begin{proof}
We first prove the second equality of (\ref{equality}). Observe that this is equivalent to
\begin{eqnarray}\label{ne}
\lim_{s\rightarrow-\infty}\big(\N_s^*(x_1,t_1)-\N_s^*(x_2,t_2)\big)=0.
\end{eqnarray}

Without loss of generality, we may assume $t_1\leq t_2$. Let $\varepsilon>0$ be arbitrarily fixed, we then apply Corollary \ref{Harnack} with $s\ll -1$ and $t^*=\varepsilon s\leq t_1$. Since we can take $R_{\text{min}}= 0$ by \cite{CBl}, we have
\begin{eqnarray}\label{med}
\N_s^*(x_1,t_1)-\N_s^*(x_2,t_2)&\leq&\left(\frac{n}{2(1-\varepsilon)|s|}\right)^{\frac{1}{2}}d_{W_1}^{g(t^*)}(\nu_{x_1,t_1}(t^*),\nu_{x_2,t_2}(t^*))
\\\nonumber&&+\frac{n}{2}\log\left(\frac{t_2-s}{t^*-s}\right)
\\\nonumber
&\leq&\left(\frac{n}{2(1-\varepsilon)|s|}\right)^{\frac{1}{2}}d_{W_1}^{g(t_1)}(\delta_{x_1},\nu_{x_2,t_2}(t_1))
\\\nonumber
&&+\frac{n}{2}\log\left(\frac{t_2-s}{(\varepsilon-1)s}\right),
\end{eqnarray}
where we have used Proposition \ref{Warmono}. By the definition of the $L^1$ Warssenstein distance, we have that
\[
d_{W_1}^{g(t_1)}(\delta_{x_1},\nu_{x_2,t_2}(t_1))\leq\int_M \dist_{t_1}(x_1,\cdot)d\nu_{x_2,t_2}(t_1)
\leq C\sum_{j=1}^\infty j\exp(-cj^2)<\infty,
\]
where the second inequality is due to the Gaussian concentration theorem (Proposition \ref{gaussianconcentration}; see also Proposition 2.2 in \cite{CMZ21} under the current assumption). Taking $s\rightarrow-\infty$ on both sides of (\ref{med}), we obtain
\begin{eqnarray*}
\lim_{s\rightarrow-\infty}\big(\N_s^*(x_1,t_1)-\N_s^*(x_2,t_2)\big)\leq \frac{n}{2}\log\left(\frac{1}{1-\varepsilon}\right).
\end{eqnarray*}
Since $\varepsilon>0$ is arbitrary, we then have
\begin{eqnarray*}
\lim_{s\rightarrow-\infty}\big(\N_s^*(x_1,t_1)-\N_s^*(x_2,t_2)\big)\leq0.
\end{eqnarray*}
Reversing the order of $(x_1,t_1)$ and $(x_2,t_2)$, we obtain (\ref{ne}).

(\ref{equality}) is proved in \cite[Corollary 4.5]{Z1}; we include its proof here. Fix an arbitrary $\varepsilon>0$, then (\ref{average}) implies
\begin{eqnarray*}
\W_{(x,t)}(\tau)\leq \N_{(x,t)}(\tau)=\frac{1}{\tau}\int_0^\tau \W_{(x,t)}(\eta)d\eta \leq\frac{1}{\tau}\int_{\varepsilon\tau}^\tau \W_{(x,t)}(\eta)d\eta\leq (1-\varepsilon)\W_{(x,t)}(\varepsilon\tau).
\end{eqnarray*}
By first taking $\tau\rightarrow\infty$ and then $\varepsilon\rightarrow 0$, we obtain
\begin{eqnarray*}
\lim_{\tau\rightarrow\infty}\W_{(x,t)}(\tau)=\lim_{\tau\rightarrow\infty}\N_{(x,t)}(\tau).
\end{eqnarray*}
\end{proof}

Finally, we are ready to prove Theorem \ref{Main_Theorem_1}.

\begin{proof}[Proof of Theorem \ref{Main_Theorem_1}]
Let $(M,g(t))_{t\in(-\infty,0]}$ be an ancient solution satisfying the conditions of this theorem. The sufficiency direction follows immediately from Proposition 3.3 in \cite{Z1}, namely, if $(M,g(t))_{t\in(-\infty,0]}$ has a finite entropy bound, then it is $\kappa$-noncollapsed on all scales, where $\kappa>0$ depends on the entropy bound.

On the other hand, if $(M,g(t))_{t\in(-\infty,0]}$ is $\kappa$-noncollapsed on all scales, where $\kappa>0$, then, by Proposition (\ref{entropyconvergence}), $(M,g(t))_{t\in(-\infty,0]}$ has an asymptotic shrinker. In combination with Proposition \ref{asymptoticentropy}, we have that Perelman's entropy defined on $(M,g(t))_{t\in(-\infty,0]}$ with arbitrary base point must have the same lower bound---the entropy on the asymptotic shrinker. This finishes the proof.
\end{proof}

\begin{proof}[Proof of Corollary \ref{Corollary1}]
We consider only case (2). Let $(x_0,t_0)$ be an arbitrary point in $M\times(-\infty,0]$ and $\ell$ the reduced distance based at $(x_0,t_0)$.
Assumption (2) in Theorem \ref{Main_Theorem_1} is implied by Hamilton's trace Harnack as mentioned in Remarks below Theorem \ref{Main_Theorem_1}.
Then, Corollary \ref{Corollary1} is a combination of Proposition \ref{entropyconvergence} and Proposition \ref{asymptoticentropy}.
\end{proof}

\section{Applications to steady solitons}
In this section, we prove Theorem \ref{main_Thm_2} together with some other results concerning steady solitons with nonnegative Ricci curvature. Let $(M^n,g,f)$ be a complete steady gradient Ricci soliton with nonnegative Ricci curvature, normalized so that
\[
    \Ric = \nabla^2 f,\quad
    R + |\nabla f|^2 = 1.
\]
Throughout this section, we will consider its \emph{canonical form} $g(\tau)$; see section 1 for the definition. Note that $\tau$ stands for the backward time.

We first observe that the Ricci nonnegativity condition implies Hamilton's trace Harnack inequality, which in turn implies item (2) in Theorem \ref{Main_Theorem_1} according to the remarks thereof. After establishing the following lemma, Theorem \ref{main_Thm_2} follows 
immediately from Theorem \ref{Main_Theorem_1}.

\begin{Lemma}
Let  $(M^n,g,f)$ be a complete steady gradient Ricci soliton with nonnegative Ricci curvature and let $(M,g(\tau))_{\tau\in[0,\infty)}$ be its canonical form. 
Then Hamilton's trace Harnack inequality \eqref{traceharnak} holds.
\end{Lemma}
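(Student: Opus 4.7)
The plan is to reduce Hamilton's trace Harnack on the canonical form to a single completion of the square, using the structural identities of the steady soliton and the fact that the canonical form is just a pullback by diffeomorphisms.

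First, from the paper's normalization $\Ric=\nabla^2 f$ and $R+|\nabla f|^2=1$, differentiating the latter gives
\[
\nabla R=-2\Ric(\nabla f,\cdot).
\]
Next, for the forward-time derivative $\partial_t=-\partial_\tau$, I would use that $g(\tau)=\Phi_\tau^*g$ with $\Phi_\tau$ the flow of $\nabla f$, so $R_{g(\tau)}=R\circ\Phi_\tau$ and $\partial_\tau R=\langle\nabla R,\nabla f\rangle=-2\Ric(\nabla f,\nabla f)$, hence
\[
\partial_t R=2\Ric(\nabla f,\nabla f).
\]
Substituting both identities into the Harnack expression and completing the square in $X$:
\[
\partial_t R-2\langle X,\nabla R\rangle+2\Ric(X,X)=2\Ric(\nabla f,\nabla f)+4\Ric(X,\nabla f)+2\Ric(X,X)=2\Ric\bigl(X+\nabla f,\,X+\nabla f\bigr),
\]
which is nonnegative by the hypothesis $\Ric\ge 0$. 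This verifies \eqref{traceharnak} at $\tau=0$.

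To propagate from the initial slice to every $\tau>0$, I would observe that the data $(g(\tau),\,f\circ\Phi_\tau)$ at each backward time is an isometric copy of $(g,f)$ and therefore satisfies the same two soliton identities with respect to $g(\tau)$. Hence the calculation above applies verbatim pointwise in $(x,\tau)$, yielding the inequality on the entire canonical form. The only place friction could arise is in sign bookkeeping, since the paper normalizes as $\Ric=\nabla^2 f$ (rather than the more common $\Ric+\nabla^2 f=0$) and works in backward time; once $\nabla R=-2\Ric(\nabla f)$ is recorded in these signs, the completion of the square is mechanical, and no genuine obstacle remains.
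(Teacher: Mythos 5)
Your proof is correct and follows essentially the same route as the paper: derive $\nabla R = -2\Ric(\nabla f,\cdot)$ from the soliton identities, use the pullback structure $g(\tau)=\Phi_\tau^*g$ to compute $\partial_t R = 2\Ric(\nabla f,\nabla f)$, and complete the square to get $2\Ric(X+\nabla f,X+\nabla f)\ge 0$. The paper handles the propagation to all $\tau$ with the single remark that the canonical form moves only by diffeomorphisms, which is the same observation you spell out at the end.
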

\begin{proof}
Since $g(\tau)=\Phi_\tau^*g$ moves only by diffeomorphisms, it suffices to verify for the metric $g=g(0).$
Recall that 
$
    \nabla R = -2\Ric(\nabla f,\cdot)
$ on steady gradient Ricci solitons; see, for example, (1.27) in \cite{RFTA-I}. (Note that we use a different sign convention on $f$ here.)
Let $X$ be an arbitrary vector field on $M.$
We can then simplify the expression of the trace Harnack quantity as:
\begin{align*}
    &-\left.\frac{\partial R}{\partial \tau}\right|_{\tau=0}-2\langle X,\nabla R\rangle+2\Ric(X,X)\\
    =&\  2 \Ric(\nabla f,\nabla f)
    + 4\Ric(X,\nabla f)
    + 2\Ric(X,X)\\
    =&\  2 \Ric(X+\nabla f, X+\nabla f)
    \ge 0,
\end{align*}
where we have applied the assumption $\Ric\ge 0.$
\end{proof}


Under the assumption of this section, the AVR (asymptotic volume ratio) is well defined,  and hence the existence of a \textit{non-flat} asymptotic shrinker implies zero AVR
as observed in 
\cite{Ni05}, which in turn implies
infinite ASCR (asymptotic scalar curvature ratio) as observed in 
\cite{DZ18}.
Recall that for a Riemannian manifold
$(M,g)$ with nonnegative Ricci curvature, we have
\[
    {\rm AVR}(g)
    := \lim_{r\to \infty}
    \frac{ {\rm Vol}(B_r(p)) }{r^n},
    \quad
    {\rm ASCR}(g)
    := \limsup_{x\to \infty}
    R(x) \dist^2(x,p).
\]
One can easily show that
the two definitions do not depend 
on the choice of the base point 
$p\in M.$ 
\begin{Corollary}
Let $(M,g,f)$ be a complete and
$\kappa$-noncollasped
steady gradient Ricci soliton
with nonnegative Ricci curvature. Assume $|{\Rm}|\le CR$ for 
some constant $C.$
If $g$ is not flat, then
${\rm AVR}(g)=0$ and
${\rm ASCR}(g)=\infty.$
\end{Corollary}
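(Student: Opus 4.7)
The plan is to obtain the existence of a non-flat asymptotic shrinker from Theorem \ref{main_Thm_2} and then quote two existing results from the literature to convert this into vanishing asymptotic volume ratio and infinite asymptotic scalar curvature ratio. This is essentially a synthesis argument; the hard analytic work was done in the previous sections.

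First I would pass to the canonical form $(M,g(\tau))_{\tau\in[0,\infty)}$ of the steady soliton. Since $\lambda=0,$ this flow is obtained from $g$ purely by pulling back along diffeomorphisms, so the ancient flow inherits nonnegativity of Ricci curvature, $\kappa$-noncollapsing, and the bound $|{\Rm}|\le CR$ at every time slice. By the preceding lemma of this section, Hamilton's trace Harnack inequality \eqref{traceharnak} holds, so assumption (2) of Theorem \ref{Main_Theorem_1} is satisfied via the remark below that theorem. Thus the hypotheses of Theorem \ref{main_Thm_2} are in force. Because $g$ is assumed non-flat, $g(\tau)$ is non-flat as well, so Theorem \ref{main_Thm_2} supplies a non-flat asymptotic shrinker for $(M,g(\tau)).$

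Next I would invoke the two cited external results. Since the underlying Riemannian manifold has $\Ric\ge 0,$ Bishop--Gromov guarantees that ${\rm AVR}(g)$ is well defined (independent of base point). A non-flat asymptotic shrinker of a $\kappa$-noncollapsed ancient flow with nonnegative Ricci curvature forces ${\rm AVR}(g)=0$: this is the observation of \cite{Ni05}, which I would simply cite. Finally, the implication ``${\rm AVR}(g)=0\Rightarrow {\rm ASCR}(g)=\infty$'' for steady gradient Ricci solitons with nonnegative Ricci curvature (and bounded curvature ratio $|{\Rm}|\le CR$) is exactly the statement proved in \cite{DZ18}; invoking it concludes the corollary.

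There is no real obstacle in this argument beyond bookkeeping: every nontrivial ingredient has been established upstream (Theorem \ref{main_Thm_2}) or exists in the literature. The only point that requires a bit of care is the verification that the canonical form faithfully transports the hypotheses of the corollary (nonnegative Ricci, $\kappa$-noncollapsing, $|{\Rm}|\le CR$, non-flatness) into the hypotheses of Theorem \ref{main_Thm_2}, which is automatic because the canonical form of a steady soliton evolves by diffeomorphisms only.
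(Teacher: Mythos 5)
Your outline follows the paper's strategy — obtain a non-flat asymptotic shrinker, deduce ${\rm AVR}(g)=0$ from the zero-AVR fact for shrinkers, then deduce ${\rm ASCR}(g)=\infty$ — but the plan to finish by ``simply citing'' \cite{Ni05} and \cite{DZ18} glosses over the fact that the paper has to re-derive both implications, because neither reference gives the statement in the hypotheses at hand. The salient gap is in your ASCR step: you attribute the implication ${\rm AVR}(g)=0\Rightarrow{\rm ASCR}(g)=\infty$ to \cite{DZ18} for steady solitons with $\Ric\ge 0$ and $|{\Rm}|\le CR$, but \cite[Proposition 2.4]{DZ18} is stated for positively curved steady solitons, and the paper's version of the argument crucially uses $\kappa$-noncollapsing, which you drop from the list of hypotheses for this step. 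Concretely, the paper argues that if ${\rm ASCR}(g)<\infty$ then $|{\Rm}|$ decays quadratically, so for a ball $B(y,r)$ at distance roughly $\sqrt{AC}\,r$ from the basepoint one has $|{\Rm}|\le r^{-2}$ on $B(y,r)$; $\kappa$-noncollapsing then gives $\operatorname{Vol}(B(y,r))\ge\kappa r^n$, which upon letting $r\to\infty$ yields ${\rm AVR}(g)>0$, a contradiction. Without $\kappa$-noncollapsing this step collapses, so the implication you want to import is simply false in the generality you state it. For the AVR step the paper is likewise more careful: it supposes ${\rm AVR}(g)=c>0$, uses Bishop--Gromov together with the self-similarity of the canonical form to get $\operatorname{Vol}B_{g(\tau)}(x,r)\ge cr^n$ uniformly in $(x,\tau,r)$, passes this to the Cheeger--Gromov--Hamilton limit to conclude the asymptotic shrinker also has ${\rm AVR}\ge c$, and contradicts \cite[Corollary 1.1]{CN09} (not \cite{Ni05}), which says non-flat shrinking gradient Ricci solitons with nonnegative Ricci curvature have zero AVR. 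Your synthesis is correct in spirit, but both citations need to be replaced by arguments adapted to the present hypotheses, and in the ASCR step the $\kappa$-noncollapsing assumption is not optional.
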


\begin{proof}
Suppose to the contrary that
${\rm AVR}(g)=c>0.$ Let $g(\tau)=\Phi_\tau^*g$ be the canonical form of $(M,g,f)$,
where $\Phi_\tau$ is the 1-parameter family of diffeomorphisms generated by 
$\nabla f.$ Then, by the Bishop-Gromov comparison theorem and the self-similarity of $g(\tau)$, we have
\begin{eqnarray}\label{volumecomparison}
\operatorname{Vol}B_{g(\tau)}(x,r)\geq cr^n
\end{eqnarray}
for all $(x,\tau)\in M\times[0,\infty)$ and for all $r>0$.
Let $\ell$ be the reduced distance
based at some $(p,0).$
Let $\tau_i\nearrow \infty$ and $x_i\in M$
be such that $\ell(x_i,\tau_i)\le n/2$.
By Proposition \ref{entropyconvergence}, after passing to a subsequence, we have that
\[
    (M,g_i(\tau),(x_i,1),\ell_i)
    \to (M_\infty, g_\infty(\tau),
    (x_\infty,1),\ell_\infty),
\]
in the sense as indicated in that proposition, where $g_i(\tau)=\tau_i^{-1}g(\tau_i\tau)$ and $
\ell_i(\cdot,\tau)=\ell(\cdot, \tau_i\tau).$
Here $(M_\infty,g_\infty, \ell_\infty)$ is the canonical form of a non-flat
Ricci shrinker. Obviously, (\ref{volumecomparison}) holds for every $g_i(\tau)$, and consequently it also holds for the asymptotic shrinker. 
It follows that $\operatorname{AVR}(g_\infty)\geq c>0$, which is a 
contradiction to 
the fact that non-flat shrinking gradient Ricci solitons with nonnegative Ricci curvature must have zero AVR (see \cite[Corollary 1.1]{CN09}).
Hence ${\rm AVR}(g)=0.$

To prove 
${\rm ASCR}(g)=\infty$,
we can argue as \cite[Proposition 2.4]{DZ18}.
Suppose that ${\rm ASCR}(g)<\infty.$ 
Write 
$\rho(x):=\dist(p,x)$ 
for some base point $p\in M$.
Then there are constants $A>1$ and $r_0>0$ such that
\[
	R(x) \le 
	\frac{A}{\rho^2(x)}\quad
	\text{ whenever }\quad 
	\rho(x)\ge r_0.
\]
Without loss of generality, 
we assume that the constant $C$ in \eqref{pinching}
is no less than $1$. For large $r>r_0,$
pick $y\in  \partial B(p, 2\sqrt{AC} r )$. Then we have
\[B(y,r)\subset B(p,3\sqrt{AC} r)\setminus B(p,\sqrt{AC}r).
\]
For any $x\in B(y,r),$ we have
\[
	|{\Rm}|(x)\le C R(x) \le \frac{AC}{\rho^2(x)}\le\frac{AC}{ACr^2} = r^{-2}.
\]
Since $g$ is $\kappa$-noncollapsed, we have
\[
	\frac{{\rm Vol}[B(p,3\sqrt{AC} r)]}{(3\sqrt{AC} r)^n}
	\ge \frac{{\rm Vol}[B(y,r)]}{(3\sqrt{AC} r)^n} \ge \frac{\kappa}{(3\sqrt{AC})^n}.
\]
Taking $r\to \infty$, we obtain ${\rm AVR}(g)>0,$ which is a contradiction
to what we just proved.
Hence ${\rm ASCR}(g)=\infty.$

\end{proof}

The above result extends \cite[Theorem 9.44]{CLN06}, while the latter proved 
${\rm ASCR}=\infty$ assuming ${\rm sec}\ge 0,
\Ric>0$, and that $R$ attains its maximum somewhere. Here, apart from $|{\Rm}|\leq CR$, we assume $\kappa$-noncollapsing, a condition which holds for singularity models.
We also generalized the previous 
results in \cite[Theorem 1.10]{CDM20}
to higher dimensions assuming
nonnegative Ricci curvature from a different approach.

Finally, we remark that P.-Y.~Chan \cite{Cha20} recently proved that
for any $4$-dimensional steady gradient Ricci soliton which is a singularity model, we must have that $|{\Rm}|\le C R$ for some constant $C$ (without assuming a curvature decaying condition as in \cite{Cha19}).

\appendix 
\appendixpage

\section{Interchangeablility of integration and differentiation}

In the proofs of Proposition \ref{thegradientintegralestimate} and Theorem \ref{thedifferentialestimatesofthenashentropy}, it must be applied that the integration and the differentiation are interchangeable. In Bamler's \cite{Bam20a} original proof, this is obviously valid since the Ricci flow in question is on a closed manifold. However, under our assumption, we cannot make the same conclusion so easily. Nonetheless, given the curvature boundedness of the Ricci flow, the estimates for the heat kernel have made it possible for us to prove the interchangeability of the integration and the differentiation. We shall only prove the following theorem, and all other similar arguments required in the proofs of Proposition \ref{thegradientintegralestimate} and Theorem \ref{thedifferentialestimatesofthenashentropy} can be proved with similar (and indeed, much easier) method.

\begin{Theorem}\label{interchangeability}
Let $(M^n,g(t))_{t\in I}$ be a complete Ricci flow with bounded curvature within each time interval compact in $I$. Then, for all $s,t\in I$ with $s<t$, we have
\begin{align*}
    \Box_{x,t}\N_s^*(x,t)=-\int_M\Box_{x,t}\big(K(x,t\,|\,\cdot,0)\log K(x,t\,|\,\cdot,s)\big)dg_s-\frac{n}{2t},
\end{align*}
where, as before, we have defined $\N^*_s(x,t):=\N_{(x,t)}(t-s)$.
\end{Theorem}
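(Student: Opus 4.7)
The plan is to reduce the theorem to an interchange of $\Box_{x,t}$ with $\int_M(\cdot)\,dg_s$, then justify that interchange via dominated convergence, using Gaussian estimates on the heat kernel and its derivatives. Writing $\tau:=t-s$ and $K:=K(x,t\,|\,\cdot,s)$, one has $f=-\log K-\tfrac{n}{2}\log(4\pi\tau)$, hence
\[
\N_s^*(x,t)=-\int_M K\log K\,dg_s-\tfrac{n}{2}\log(4\pi\tau)-\tfrac{n}{2}.
\]
Since $\int_M K\,dg_s\equiv 1$ and $\Box_{x,t}$ applied to the explicit $\log$-term contributes exactly $-\tfrac{n}{2\tau}$, the theorem reduces to the identity
\[
\Box_{x,t}\!\int_M K\log K\,dg_s=\int_M\Box_{x,t}(K\log K)\,dg_s.
\]

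The next step is to collect Gaussian-type bounds valid on complete Ricci flows with curvature bounded on each compact time subinterval. Fix $(x_0,t_0)\in M\times(s,T]$ and a small closed parabolic neighborhood $U=\overline{B_{g(t_0)}(x_0,r_0)}\times[t_0-\epsilon,t_0+\epsilon]$ on which $|{\Rm}|$ is bounded. By standard heat-kernel theory (see, e.g., \cite{RFTA-III}) together with the Bernstein--Bando--Shi technique, there exist constants $C,c>0$ such that, uniformly for $(x,t)\in U$ and $y\in M$,
\begin{align*}
c\tau^{-n/2}e^{-C\dist_s^2(x_0,y)/\tau}\;&\le\;K(x,t\,|\,y,s)\;\le\;C\tau^{-n/2}e^{-c\dist_s^2(x_0,y)/\tau},\\
|\nabla_xK|+\tau^{1/2}\bigl(|\partial_tK|+|\Delta_xK|\bigr)\;&\le\;C\tau^{-(n+1)/2}e^{-c\dist_s^2(x_0,y)/\tau},\\
|\nabla_xK|^2/K\;&\le\;C\tau^{-(n+2)/2}e^{-c\dist_s^2(x_0,y)/\tau}.
\end{align*}
The lower bound gives $|\log K|\le C(1+\dist_s^2(x_0,y))$ on $U$, and feeding this into the elementary identities $\partial_t(K\log K)=(\partial_tK)(\log K+1)$, $\nabla_x(K\log K)=(\nabla_xK)(\log K+1)$, and $\Delta_x(K\log K)=(\Delta_xK)(\log K+1)+|\nabla_xK|^2/K$, we see that all first and second $(x,t)$-derivatives of $K\log K$ are dominated, uniformly on $U$, by an integrable function of $y$ of the form $C'(1+\dist_s^2(x_0,y))\exp(-c'\dist_s^2(x_0,y))$.

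With this uniform domination in hand, the standard differentiation-under-the-integral-sign lemma (iterated dominated convergence applied to first and second difference quotients in $(x,t)$) justifies exchanging $\partial_t$, $\nabla_x$, and $\Delta_x$ with $\int_M\cdot\,dg_s$ at the point $(x_0,t_0)$. Since $(x_0,t_0)\in M\times(s,T]$ was arbitrary, the interchange, and hence the theorem, follows. The principal technical obstacle I foresee is securing the Gaussian lower bound on $K$ needed to bound $|\log K|$ from above; this is nontrivial on noncompact $M$, but under the bounded-curvature assumption it is provided by the Li--Yau--Hamilton-type Harnack inequality for the coupled conjugate heat equation. The corresponding upper and derivative estimates for $K$ are then routine consequences of parabolic regularity for Ricci flows with locally bounded curvature.
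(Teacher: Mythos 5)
Your proposal matches the paper's proof in both strategy and structure: reduce to an interchange of $\Box_{x,t}$ with $\int_M(\cdot)\,dg_s$, dominate the $(x,t)$-derivatives of $K\log K$ by integrable Gaussian functions of $y$ (using a Gaussian lower bound on $K$ to control $|\log K|$ and Hamilton/Shi-type estimates for the derivatives of $K$), and invoke dominated convergence. The paper carries out the same plan more explicitly, proving the needed Gaussian upper/lower and first/second derivative bounds in Lemmas A.2--A.4 by citing specific results (Lemma 26.17, Corollary 26.25, Theorem 26.31 in \cite{RFTA-III}, the gradient estimate of \cite{BCP10,ZQ06}, and a localized Shi estimate), where you appeal to these estimates as ``routine consequences of parabolic regularity.''
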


By a parabolic scaling, we shall assume that $s=0$ and $t=1$. We shall henceforth fix a point $x\in M$, a small convex open neighborhood $x\in U\subset B_{g_0}(x,1)$, and a small positive number $\varepsilon\ll 1$. where $g_0:=g(0)$ will be used as the reference metric throughout the proof. Indeed, we need only to show that
\begin{align*}
    y\rightarrow &\sup_{z\in U}\left|\nabla_z\big(K(z,1\,|\,y,0)\log K(z,1\,|\,y,0)\big)\right|,
    \\
    y\rightarrow &\sup_{z\in U}\left|\nabla^2_z\big(K(z,1\,|\,y,0)\log K(z,1\,|\,y,0)\big)\right|,
    \\
    y\rightarrow& \sup_{t\in(1-\varepsilon,1+\varepsilon)}\left|\partial_t\big(K(x,t\,|\,y,0)\log K(x,t\,|\,y,0)\big)\right|,
\end{align*}
are all dominated by integrable functions, and the conclusion of the theorem follows from Lebesgue's dominated convergence theorem.

\begin{Lemma}\label{lemmaofthecontrolofthefirstderivative}
There are constants $C$, depending on the curvature bound on $M\times[0,1]$ and $\Vol_{g_0}B_{0}(x,1)$, such that
$$\sup_{z\in U}\left|\nabla_z\big(K(z,1\,|\,y,0)\log K(z,t\,|\,y,0)\big)\right|\leq C\exp\left(-C^{-1}\dist^2_0(x,y)\right),$$
for all $y\in M$.
\end{Lemma}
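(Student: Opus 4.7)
\medskip

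\noindent\textbf{Proof plan.} The basic idea is to decompose the quantity via the product rule,
\[
    \nabla_z \bigl( K(z,1\,|\,y,0) \log K(z,1\,|\,y,0) \bigr)
    = \bigl( 1 + \log K(z,1\,|\,y,0) \bigr)\, \nabla_z K(z,1\,|\,y,0),
\]
so that the estimate reduces to controlling two quantities separately: a pointwise Gaussian upper bound for $K$, and a pointwise upper bound for $|\nabla_z K|/K$ in terms of the distance $\dist_{g_0}(z,y)$. The rough target is
\[
    |\nabla_z K(z,1\,|\,y,0)| \le C\bigl(1+\dist_{g_0}(z,y)\bigr) \exp\!\bigl(-\dist_{g_0}^2(z,y)/C\bigr),
    \qquad
    |\log K(z,1\,|\,y,0)| \le C\bigl(1+\dist_{g_0}^2(z,y)\bigr),
\]
which together yield a product of polynomial and Gaussian type. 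Polynomial factors are then absorbed into a mildly weakened Gaussian via $p(d)e^{-d^2/C}\le C' e^{-d^2/C'}$.

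\medskip

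\noindent The two-sided Gaussian bound for the conjugate heat kernel coupled with a Ricci flow of bounded curvature on $M\times[0,1]$ is standard; one may invoke the estimates in \cite[Theorem 24.40]{RFTA-III}, or derive them from the usual mean-value/parabolic machinery together with the noncollapsing furnished by $\operatorname{Vol}_{g_0} B_{g_0}(x,1)$. The upper bound $K(z,1|y,0)\le C e^{-\dist_0^2(z,y)/C}$ gives the Gaussian decay. The lower bound, where the dependence on $\operatorname{Vol}_{g_0} B_{g_0}(x,1)$ enters, controls $-\log K$ from above by $C(1+\dist_0^2(z,y))$; the upper bound trivially controls $\log K$ from above by the same type of expression. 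Hence $|\log K|$ is at most quadratic in the distance.

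\medskip

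\noindent For the gradient bound on $K$, one applies Theorem~\ref{thm:Bamler gradient estimates} in the form
$|\nabla_z \Phi_{1}^{-1}(u)|\le 1$ with $u$ being $K$ suitably normalized, or equivalently, one uses the standard Bernstein--Bando--Shi estimate on $\log K$ combined with the Bochner formula under bounded curvature. This yields $|\nabla_z K|/K \le C(1+\sqrt{|\log K|})$, and combining with the Gaussian upper bound gives $|\nabla_z K|\le C(1+\dist_0(z,y))\, e^{-\dist_0^2(z,y)/C}$. Multiplying by $|1+\log K|\le C(1+\dist_0^2(z,y))$ and absorbing the polynomial factor,
\[
    \bigl|\nabla_z(K\log K)\bigr|
    \le C\bigl(1+\dist_0(z,y)\bigr)^3 \exp\!\bigl(-\dist_0^2(z,y)/C\bigr)
    \le C' \exp\!\bigl(-\dist_0^2(z,y)/C'\bigr).
\]
Finally, since $z\in U\subset B_{g_0}(x,1)$, we have $\dist_0(z,y)\ge \dist_0(x,y)-1$, so $\dist_0^2(z,y)\ge \tfrac12 \dist_0^2(x,y)-1$ for $\dist_0(x,y)\ge 2$, and for $\dist_0(x,y)\le 2$ the bound is trivial. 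Taking the supremum over $z\in U$ gives the claimed estimate.

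\medskip

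\noindent The main obstacle is justifying the global Gaussian two-sided heat kernel estimates and the gradient estimate in the noncompact setting under only the assumption of bounded curvature on $M\times[0,1]$. The upper bound and gradient estimate are rather standard, but the lower bound requires a volume lower bound at one time slice; this is precisely why the constant $C$ in the statement is allowed to depend on $\operatorname{Vol}_{g_0} B_{g_0}(x,1)$. Once these are in place, the combination reduces to routine book-keeping of polynomial-vs-Gaussian absorption.
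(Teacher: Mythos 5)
Your proposal is correct and follows essentially the same route as the paper's proof: decompose via the product rule, bound $|\nabla_z K|/K$ by a Hamilton-type gradient estimate (the paper cites \cite{BCP10,ZQ06}, Lemma 2.4 in \cite{Z1}, rather than Theorem~\ref{thm:Bamler gradient estimates}, but these give the same $|\nabla K|/K\lesssim\sqrt{-\log(K/A)}$ bound), control $|\log K|$ from both sides using two-sided Gaussian heat-kernel bounds from \cite{RFTA-III} on $B_0(x,2)\times[\tfrac12,1]$, and absorb the resulting polynomial factor into a weakened Gaussian. The only cosmetic difference is that the paper uses the elementary inequality $|u\log u|\le u^{1/2}+u^2$ in place of your $|\log K|\le C(1+\dist_0^2)$, but both lead to the same conclusion.
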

\begin{proof}
Let us first of all fix an arbitrary $y\in M$. By Lemma 26.17 in \cite{RFTA-III}, we have
\begin{eqnarray*}
K(\cdot,\cdot\,|\,y,0)\leq\frac{C}{\Vol_{g_0}B_{0}\left(y,\tfrac{\sqrt{t}}{2}\right)}\leq\frac{C}{t^{\frac{n}{2}}\Vol_{g_0}B_0(y,1)}\leq\frac{C\exp(C\dist_0(x,y))}{t^{\frac{n}{2}}}\quad\text{ on }\quad M\times(0,1],
\end{eqnarray*}
where we have applied the Bishop-Gromov comparison theorem. Then, applying \cite{BCP10, ZQ06}(c.f. Lemma 2.4 in \cite{Z1}) on $M\times [\tfrac{1}{4},1]$, we have
\begin{eqnarray}\label{thefirstcoarsegradientestimate}
\big|\nabla_zK(z,t\,|\,y,0)\big|&\leq&\frac{1}{\sqrt{t-\tfrac{1}{4}}}\cdot K(z,t\,|\,y,0)\cdot\sqrt{\log\left(\frac{C\exp(C\dist_0(x,y))}{K(z,t\,|\,y,0)}\right)}
\\\nonumber
&\leq& CK(z,t\,|\,y,0)\sqrt{\log\left(\frac{C\exp(C\dist_0(x,y))}{K(z,t\,|\,y,0)}\right)},
\end{eqnarray}
for all $(z,t)\in M\times[\tfrac{1}{2},1]$. If we  restrict $(z,t)\in B_0(x,2)\times[\tfrac{1}{2},1]$, then, by Theorem 26.31 in \cite{RFTA-III}, we have
\begin{eqnarray}\label{anothersomewhatusefulgaussianlowerbound}
K(z,t\,|\,y,0)\geq C^{-1}e^{-C\dist^2_0(y,z)}\geq  C^{-1}e^{-C\dist^2_0(x,y)},
\end{eqnarray}
and (\ref{thefirstcoarsegradientestimate}) becomes
\begin{eqnarray}\label{somefinercoarsegradientestimate}
\big|\nabla_zK(z,t\,|\,y,0)\big|\leq CK(z,t\,|\,y,0)\left(C+C\dist^2_0(x,y)\right)
\end{eqnarray}
for all $(z,t)\in B_0(x,2)\times[\tfrac{1}{2},1]$. This further implies that, if $z\in U\subset B_0(x,1)$, then we have
\begin{align}\label{nonsenseintermedia1}
&\, \left|\nabla_z\big(K(z,1\,|\,y,0)\log K(z,t\,|\,y,0)\big)\right|
\\\nonumber
\leq & \ |\nabla_z K(z,1\,|\,y,0)|\big(1+|\log K(z,1\,|\,y,0) |\big)
\\\nonumber
\leq & \ C(C+C\dist^2_0(x,y))\cdot \big(K(z,1\,|\,y,0)+K^{\frac{1}{2}}(z,1\,|\,y,0)+K^2(z,1\,|\,y,0)\big),
\end{align}
where we have applied the fact the fact that $|u\log u|\leq u^{\frac{1}{2}}+u^2$ for all $u>0$. Finally, by Corollary 2.26 in \cite{RFTA-III}, for all $(z,t)\in B_0(x,2)\times[\tfrac{1}{2},1]$, it holds that
\begin{align}\label{nonsenseintermedia2}
    K(z,t\,|\,y,0)\leq Ce^{-C^{-1}\dist^2_0(y,z)}\leq Ce^{-C^{-1}\dist^2_0(x,y)},
\end{align}
where $C$ depends on the curvature bound and $\displaystyle\inf_{z\in B_0(x,2)}\Vol_{g_0}B_0(z,1)$, which in turn depends only on the curvature bound and $\Vol_{g_0}B_0(x,1)$ by the Bishop-Gromov theorem. The lemma then follows from combining (\ref{nonsenseintermedia1}) and (\ref{nonsenseintermedia2}).
\end{proof}

\begin{Lemma}\label{lemmaofthecontrolofthesecondderivative}
There are constants $C$, depending on the curvature bound on $M\times[0,1]$ and $\Vol_{g_0}B_{0}(x,1)$, such that
$$\sup_{z\in U}\left|\nabla^2_z\big(K(z,1\,|\,y,0)\log K(z,t\,|\,y,0)\big)\right|\leq C\exp\left(-C^{-1}\dist^2_0(x,y)\right),$$
for all $y\in M$.
\end{Lemma}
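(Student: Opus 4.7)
The plan is to expand the Hessian of $K\log K$ into pieces that can each be bounded by the same Gaussian-decay expression, exactly as in the previous lemma. Writing $K=K(z,1\,|\,y,0)$ for brevity, the chain rule gives
\[
    \nabla_z^{2}\big(K\log K\big)
    \;=\; (1+\log K)\,\nabla_z^{2} K
    \;+\; \frac{\nabla_z K\otimes \nabla_z K}{K}.
\]
Thus it suffices to show that each of $|\log K|\cdot|\nabla_z^{2}K|$ and $|\nabla_z K|^{2}/K$ carries an upper bound of the claimed Gaussian form on $U\times\{1\}$, for every $y\in M$. The $\log$-factor is already controlled: combining the two-sided heat kernel estimates (\ref{anothersomewhatusefulgaussianlowerbound}) and (\ref{nonsenseintermedia2}) used in the previous lemma, one has $|\log K(z,1\,|\,y,0)|\le C+C\,\dist_{0}^{2}(x,y)$ whenever $z\in B_{0}(x,2)$.

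The term $|\nabla_z K|^{2}/K$ is handled by a direct recycling of the previous lemma. Namely, the refined gradient estimate (\ref{somefinercoarsegradientestimate}) gives $|\nabla_z K|\le CK\bigl(C+C\dist_{0}^{2}(x,y)\bigr)$ on $B_{0}(x,2)\times[\tfrac12,1]$, so
\[
    \frac{|\nabla_z K|^{2}}{K}
    \;\le\; C\bigl(1+\dist_{0}^{2}(x,y)\bigr)^{2}\,K(z,1\,|\,y,0)
    \;\le\; C\,\bigl(1+\dist_{0}^{2}(x,y)\bigr)^{2}\,\exp\!\bigl(-C^{-1}\dist_{0}^{2}(x,y)\bigr),
\]
and the polynomial prefactor is absorbed into the exponential at the cost of enlarging $C$.

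The main work is the pointwise Hessian estimate $|\nabla_z^{2} K(z,1\,|\,y,0)|\le C\exp(-C^{-1}\dist_{0}^{2}(x,y))$ for $z\in U$. I would obtain this by iterating the Souplet--Zhang-type argument one more step, or equivalently by the Bernstein--Bando--Shi technique applied to $|\nabla K|^{2}$. Concretely, on the parabolic cylinder $B_{0}(x,2)\times[\tfrac14,1]$ the curvature is bounded (by hypothesis) and the volume is bounded below (by Bishop--Gromov from $\Vol_{g_{0}}B_{0}(x,1)$ together with the curvature bound controlling distance distortion), so standard interior parabolic regularity for the heat equation $\partial_t K = \Delta_{g(t)} K$ yields
\[
    \sup_{B_{0}(x,1)\times[\tfrac12,1]} |\nabla^{2}_z K|
    \;\le\; C\,\sup_{B_{0}(x,2)\times[\tfrac14,1]} K
    \;\le\; C\,\exp\!\bigl(-C^{-1}\dist_{0}^{2}(x,y)\bigr),
\]
where the second inequality is the Gaussian upper bound (\ref{nonsenseintermedia2}). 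Multiplying by $1+|\log K|\le C+C\dist_{0}^{2}(x,y)$ and absorbing the polynomial factor into the exponential as before completes the bound on $(1+\log K)\nabla^{2}_z K$, and together with the bound on $|\nabla_z K|^{2}/K$ proves the lemma.

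The only genuine obstacle is verifying the Hessian estimate on $K$ with the correct Gaussian dependence on $y$; the chain-rule identity and the previous lemma reduce everything else to routine bookkeeping. If one prefers to avoid citing parabolic regularity directly, the same bound can be obtained by applying the Hamilton--Souplet--Zhang gradient estimate \cite{BCP10, ZQ06} to the function $|\nabla K|^{2}$ (which is itself bounded by (\ref{somefinercoarsegradientestimate}) and satisfies a parabolic differential inequality), and then converting the resulting gradient bound on $|\nabla K|^{2}$ into a pointwise bound on $|\nabla^{2} K|$ via $\Box|\nabla K|^{2}=-2|\nabla^{2}K|^{2}$ and a standard parabolic mean-value argument on the good cylinder $B_{0}(x,\tfrac{3}{2})\times[\tfrac{3}{8},1]$.
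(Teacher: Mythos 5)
Your proof is correct and follows essentially the same route as the paper: both expand $\nabla_z^2(K\log K)$ by the chain rule into $(1+\log K)\nabla_z^2 K$ plus $\nabla_z K\otimes\nabla_z K / K$, bound the log factor and the $|\nabla K|^2/K$ term exactly as you do from the two-sided Gaussian bounds and the Souplet--Zhang estimate already established in the previous lemma, and then reduce the whole thing to a pointwise Hessian bound $|\nabla_z^2 K|\le C\exp(-C^{-1}\dist_0^2(x,y))$ on the interior cylinder. For that Hessian bound the paper does what you call the Bernstein--Bando--Shi technique, but spells it out: it records the Lichnerowicz evolution $(\partial_t-\Delta_L)\nabla^2 u=0$, hence $\Box|\nabla^2 u|^2\le -2|\nabla^3 u|^2 + C|\nabla^2 u|^2$ together with $\Box|\nabla u|^2=-2|\nabla^2 u|^2$, and then invokes the localized Shi estimate (Theorem 14.10 with the cutoff of Lemma 14.3 in \cite{RFTA-III}) on $B_0(x,2)\times[\tfrac12,1]$. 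That is precisely the content of your one-line appeal to ``standard interior parabolic regularity,'' and it is the right tool here since the metric is evolving by Ricci flow and one cannot simply cite fixed-background Schauder theory. Your second, alternative route at the end is less reliable as written---the Hamilton--Souplet--Zhang estimate is formulated for positive solutions of the heat equation, not for the subsolution $|\nabla K|^2$, and a gradient bound on $|\nabla K|^2$ does not by itself produce a pointwise bound on $|\nabla^2 K|$---but since you offer it only as an optional variant and your primary argument is sound, this is a minor aside rather than a gap.
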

\begin{proof}
Let us fix an arbitrary $y\in M$. Combining (\ref{thefirstcoarsegradientestimate}) and (\ref{nonsenseintermedia2}), we have
\begin{eqnarray}\label{intermedianonsense001}
\big|\nabla_zK(z,t\,|\,y,0)\big|\leq C\exp\left(-C^{-1}\dist^2_0(x,y)\right)\quad\text{ for all }\quad (z,t)\in B_0(x,2)\times[\tfrac{1}{2},1].
\end{eqnarray}
We shall then consider the function $u(z,t):=K(z,t\,|\,y,0)$, which is a solution to the heat equation. Indeed, since
$$\left(\frac{\partial}{\partial t}-\Delta_L\right)\nabla^2u=0,$$
where $\Delta_L$ is the Lichnerowicz Laplacian operator, we have
\begin{eqnarray*}
\Box|\nabla^2u|^2&=&-2|\nabla^3 u|+4\Rm(\nabla^2u,\nabla^2u)
\\
&\leq&-2|\nabla^3 u|^2+C|\nabla^2u|^2,
\end{eqnarray*}
and $C$ depends on the curvature bound. On the other hand, we have
$$\Box |\nabla u|^2=-2|\nabla^2 u|^2.$$
We may then apply Shi's gradient estimate (c.f. Theorem 14.10 in \cite{RFTA-III}, using the cut-off function constructed by Lemma 14.3 therein) on $B_0(x,2)\times[\tfrac{1}{2},1]$ and obtain that
\begin{eqnarray}\label{secondderivative000}
\big|\nabla^2_zK(z,1\,|\,y,0)\big|\leq C\exp\left(-C^{-1}\dist^2_0(x,y)\right)\quad\text{ for all }\quad z\in B_0(x,1).
\end{eqnarray}
Since, for all $z\in B_0(x,1)$, we have 
\begin{align}\label{asecondderivativeestimate}
   &\, \left|\nabla^2_z\big(K(z,1\,|\,y,0)\log K(z,1\,|\,y,0)\big)\right|
   \\\nonumber
   \leq &\ |\nabla^2K(z,1\,|\,y,0)|(|\log K(z,1\,|\,y,0)|+1)+\frac{\left|\nabla_zK(z,1\,|\,y,0)\right|^2}{K(z,1\,|\,y,0)}
   \\\nonumber
   \leq &\ |\nabla^2K(z,1\,|\,y,0)|(|\log K(z,1\,|\,y,0)|+1)+ C\exp\left(-C^{-1}\dist^2_0(x,y)\right),
\end{align}
where in the last inequality, we have applied a consequence of (\ref{somefinercoarsegradientestimate}) and (\ref{nonsenseintermedia2}):
\begin{eqnarray*}
    \frac{\left|\nabla_zK(z,1\,|\,y,0)\right|^2}{K(z,1\,|\,y,0)}&=&\left(\frac{\left|\nabla_zK(z,1\,|\,y,0)\right|}{K(z,1\,|\,y,0)}\right)^2\cdot K(z,1\,|\,y,0)
    \\
    &\leq& C(1+\dist^2_0(x,y))^2\exp\left(-C^{-1}\dist^2_0(x,y)\right)
    \\
    &\leq& C\exp\left(-C^{-1}\dist^2_0(x,y)\right),
\end{eqnarray*}
for all $z\in B_0(x,1)$, the lemma then follows from (\ref{asecondderivativeestimate}) together with (\ref{anothersomewhatusefulgaussianlowerbound}), (\ref{nonsenseintermedia2}), and (\ref{secondderivative000}).
\end{proof}

\begin{Lemma}\label{lemmaofthecontrolofthetimederivative}
There are constants $C$, depending on the curvature bound on $M\times[0,1]$ and $\Vol_{g_0}B_{0}(x,1)$, such that
$$\sup_{t\in(1-\varepsilon,1+\varepsilon)}\left|\partial_t\big(K(x,t\,|\,y,0)\log K(x,t\,|\,y,0)\big)\right|\leq C\exp\left(-C^{-1}\dist^2_0(x,y)\right),$$
for all $y\in M$, where $\varepsilon$ is a small positive constant, which could be taken to be, say $100^{-1}$.
\end{Lemma}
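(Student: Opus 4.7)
The plan is to reduce the control of the time derivative to the spatial Laplacian estimate already established in Lemma \ref{lemmaofthecontrolofthesecondderivative}. Since $K(\cdot,\cdot\,|\,y,0)$ solves the forward heat equation coupled with the Ricci flow, namely $\partial_t K=\Delta_{g(t)}K$, the chain rule yields
\[
    \partial_t\big(K(x,t\,|\,y,0)\log K(x,t\,|\,y,0)\big)
    = \Delta_{g(t)}K(x,t\,|\,y,0)\cdot\big(1+\log K(x,t\,|\,y,0)\big).
\]
Thus it suffices to establish pointwise Gaussian-type bounds for $|\Delta_{g(t)}K|$ and for $|\log K|$ on $\{x\}\times(1-\varepsilon,1+\varepsilon)$.

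For the Laplacian factor, I would simply repeat the Bochner--Shi argument in the proof of Lemma \ref{lemmaofthecontrolofthesecondderivative}, but on the slightly enlarged parabolic cylinder $B_0(x,2)\times[\tfrac12,1+\varepsilon]$ instead of $B_0(x,2)\times[\tfrac12,1]$. The curvature is still bounded on this larger cylinder, so the coarse gradient estimates (\ref{thefirstcoarsegradientestimate}) and (\ref{intermedianonsense001}), the two-sided Gaussian bounds (\ref{anothersomewhatusefulgaussianlowerbound}) and (\ref{nonsenseintermedia2}), and the cutoff version of Shi's estimate applied to $|\nabla u|^2$ and $|\nabla^2 u|^2$ with $u=K(\cdot,\cdot\,|\,y,0)$ all go through with only cosmetic changes. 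This yields
\[
    \sup_{t\in(1-\varepsilon,1+\varepsilon)}|\Delta_{g(t)}K(x,t\,|\,y,0)|
    \le C\exp\big(-C^{-1}\dist_0^2(x,y)\big),
\]
after bounding $|\Delta K|\le n|\nabla^2 K|$.

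For the logarithmic factor, I would combine (\ref{nonsenseintermedia2}) (upper Gaussian) and (\ref{anothersomewhatusefulgaussianlowerbound}) (lower Gaussian), extended to the larger parabolic cylinder by the same references (Corollary 2.26 and Theorem 26.31 in \cite{RFTA-III}), to obtain
\[
    |\log K(x,t\,|\,y,0)|
    \le C+C\dist_0^2(x,y),
    \quad t\in(1-\varepsilon,1+\varepsilon).
\]

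Putting these two ingredients together gives
\[
    \left|\partial_t\big(K\log K\big)(x,t\,|\,y,0)\right|
    \le C\exp\big(-C^{-1}\dist_0^2(x,y)\big)\cdot\big(1+C+C\dist_0^2(x,y)\big),
\]
and the polynomial prefactor is absorbed into the Gaussian after enlarging the constant $C$. The main obstacle, if any, is purely bookkeeping: making sure the slightly enlarged parabolic cylinder $B_0(x,2)\times[\tfrac12,1+\varepsilon]$ is contained in the time interval of the flow (which is guaranteed when $t=1$ lies in the interior of $I$, so that $\varepsilon$ may be chosen small accordingly), and verifying that the constants in the Gaussian and derivative estimates depend only on the curvature bound and $\Vol_{g_0}B_0(x,1)$, exactly as in the previous two lemmas.
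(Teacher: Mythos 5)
Your proposal is correct and follows essentially the same route as the paper: rewrite $\partial_t\big(K\log K\big)=(1+\log K)\,\Delta_x K$ via the forward heat equation, then reduce to the Gaussian and Shi-type estimates already established in Lemma~\ref{lemmaofthecontrolofthesecondderivative}, working on a slightly enlarged parabolic cylinder. The paper records exactly this identity and then simply remarks that the proof is ``not essentially different from the above lemma,'' so your writeup just fills in the details the authors leave implicit.
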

\begin{proof}
Since 
\begin{align*}
    \partial_t\big(K(x,t\,|\,y,0)\log K(x,t\,|\,y,0)\big)=\big(\log K(x,t\,|\,y,0)+1\big)\cdot \Delta_xK(x,t\,|\,y,0),
\end{align*}
the proof is not essentially different from the above lemma.
\end{proof}

\begin{proof}[Proof of Theorem \ref{interchangeability}]
Let $v\in T_x M$ be a unit vector with respect to $g(1)$, and let $\gamma(s)$ be a unit speed $g(1)$-geodesic emanating from $x$ with $\gamma'(0)=v$. Letting $u_z(y):=K(z,1\,|\,y,0)\log K(z,1\,|\,y,0)$, we have
\begin{eqnarray*}
\left|\frac{1}{s}\big(u_{\gamma(s)}(y)-u_x(y)\big)\right|\leq\frac{1}{s}\int_0^s\sup_{z\in B_0(x,1)}|\nabla_zu_z(y)|ds\leq C\exp\left(-C^{-1}\dist^2_0(x,y)\right),
\end{eqnarray*}
for all $s$ small enough, where we have applied (\ref{lemmaofthecontrolofthefirstderivative}), and the right-hand-side is obviously integrable in $y$ because of the curvature boundedness assumption. Hence, by Lebesgue's dominated convergence theorem, we have
\begin{eqnarray*}
\nabla_v\N^*_0(x,1)&=&\lim_{s\rightarrow 0+}\frac{1}{s}\big(\N^*_0(\gamma(s),1)-\N^*_0(x,1)\big)
\\
&=&\lim_{s\rightarrow 0+}\int_M\frac{1}{s}\big(u_{\gamma(s)}(y)-u_x(y)\big)dg_0(y)
\\
&=&\int_M\lim_{s\rightarrow 0+}\frac{1}{s}\big(u_{\gamma(s)}(y)-u_x(y)\big)dg_0(y)
\\
&=&\int_M\nabla_v\big(K(x,1\,|\,y,0)\log K(x,t\,|\,y,0)\big)dg_0(y).
\end{eqnarray*}
We have proved that the first spatial derivative is interchangeable with the integration. In like manner, with the help of Lemma \ref{lemmaofthecontrolofthesecondderivative} and Lemma \ref{lemmaofthecontrolofthetimederivative}, one may also verify that both the second spatial derivative and the first time derivative are interchangeable with the integration. Once this is done, the theorem follows from Bamler's original proof of Theorem 5.9 in \cite{Bam20a}.
\end{proof}

\noindent Department of Mathematics, University of California, San Diego, CA, 92093
\\ E-mail address: \verb"zim022@ucsd.edu"
\\

\noindent School of Mathematics, University of Minnesota, Twin Cities, MN, 55414
\\ E-mail address: \verb"zhan7298@umn.edu"

\end{document}